\newtheorem{theorem}{Theorem}[section]
\newtheorem{proposition}[theorem]{Proposition}
\newtheorem{lemma}[theorem]{Lemma}
\newtheorem{corollary}[theorem]{Corollary}
\newtheorem{definition}[theorem]{Definition}
\newtheorem{assumption}[theorem]{Assumption}
\newtheorem{remark}[theorem]{Remark}
\newtheorem{example}[theorem]{Example}
\theoremstyle{remark}
\def\l@section{\@tocline{1}{0pt}{1pc}{}{}}
\def\l@subsection{\@tocline{2}{0pt}{1pc}{4.6em}{}}
\def\l@subsubsection{\@tocline{3}{0pt}{1pc}{7.6em}{}}
\renewcommand{\tocsection}[3]{%
  \indentlabel{\@ifnotempty{#2}{\makebox[2.3em][l]{%
    \ignorespaces#1 #2.\hfill}}}#3}
\renewcommand{\tocsubsection}[3]{%
  \indentlabel{\@ifnotempty{#2}{\hspace*{2.3em}\makebox[2.3em][l]{%
    \ignorespaces#1 #2.\hfill}}}#3}
\newcommand{\R}{{\mathord{\mathbb R}}}
\newcommand{\Rd}{{\mathord{\mathbb R}^d}}
\newcommand{\loc}{{\rm loc}}
\newcommand{\id}{{\mathop{\rm \mathbf{id} }}}
\newcommand{\grad}{\nabla}
\newcommand{\wsto}{\stackrel{*}{\rightharpoonup}}
\newcommand{\la}{\left\langle}
\newcommand{\ra}{\right\rangle}
\def\P{{\mathcal P}}
\def\epsilon{\varepsilon}
\newcommand{\argmin}{\operatornamewithlimits{argmin}}
\newcommand{\bxi}{\boldsymbol{\xi}}
\newcommand{\bt}{\mathbf{t}}
\newcommand{\bmu}{\boldsymbol{\mu}}
\newcommand{\bnu}{\boldsymbol{\nu}}
\newcommand{\tomega}{\tilde{\omega}}
\newcommand{\tf}{\tilde{f}}
\newcommand{\tF}{\tilde{F}}
\begin{document}
\title[Nonconvex gradient flow in the Wasserstein metric]{Nonconvex gradient flow in the Wasserstein metric and applications to constrained nonlocal interactions}
\author{Katy Craig}
\address{Department of Mathematics, University of California, Santa Barbara, CA}
\email{kcraig@math.ucsb.edu}
\thanks{This work was supported by a UC President's Postdoctoral Fellowship and U.S. National Science Foundation grant DMS 1401867.}
\date{December 22, 2015}
\subjclass[2010]{35Q82, 35Q92, 35A15, 47J25, 47J35}
\keywords{Wasserstein metric, gradient flow, discrete gradient flow, JKO scheme, Osgood criterion, Keller-Segel equation, aggregation equation, nonlocal interaction energies}  

\begin{abstract}

Over the past fifteen years, the theory of Wasserstein gradient flows of convex (or, more generally, semiconvex) energies has led to advances in several areas of partial differential equations and analysis. In this work, we extend the well-posedness theory for Wasserstein gradient flows to energies satisfying a more general criterion for uniqueness, motivated by the Osgood criterion for ordinary differential equations. We also prove the first quantitative estimates on convergence of the discrete gradient flow or \emph{JKO scheme} outside of the semiconvex case. We conclude by applying these results to study the well-posedness of constrained nonlocal interaction energies, which have arisen in recent work on biological chemotaxis and congested aggregation.
\end{abstract}

\maketitle

\section{Introduction}

For a range of physical and biological processes---from vortex motion in superconductors to biological swarming---the evolution of a large number of individual agents can be modeled as a \emph{gradient flow in the Wasserstein metric}. Heuristically, a curve in the space of probability measures $\mu(t): [0,T] \to \P(\Rd)$  is the Wasserstein gradient flow of an energy $E: \P(\Rd) \to \R \cup \{+\infty \}$ if
\[ \partial_t \mu(t) = - \grad_{W_2} E(\mu(t)) , \]
for a generalized notion of gradient $\grad_{W_2}$ induced by the metric. Provided that the curve $\mu(t)$ and energy $E$ are sufficiently regular, the gradient flow of $E$ corresponds to the partial differential equation
\[ \partial_t \mu = \grad \cdot \left(\mu \grad \frac{\partial E}{\partial \mu} \right) . \]

The utility of Wasserstein gradient flow in studying such partial differential equations was first demonstrated by Otto in his work on the porous medium equation \cite{Otto}, in which he used the Wasserstein perspective to obtain sharp estimates on asymptotic behavior of solutions. Over the past fifteen years, the theory of Wasserstein gradient flows of \emph{convex} and, more generally, \emph{semiconvex} energies has been well-developed---where $E$ is \emph{semiconvex} if there exists $\lambda \in \R$ so that, for any Wasserstein geodesic $\mu_\alpha \in \P(\Rd)$ with $\alpha \in [0,1]$,
\[E(\mu_\alpha) \leq (1-\alpha) E(\mu_0) + \alpha E (\mu_1) - \alpha (1-\alpha) \frac{\lambda}{2}W_2^2(\mu_0,\mu_1). \]
When $\lambda =0$, this reduces to the standard notion of convexity.

In their book on gradient flows in metric spaces, Ambrosio, Gigli, and Savar\'e offer a comprehensive treatment of the well-posedness of Wasserstein gradient flows in the semiconvex case and describe many of the partial differential equations that may be treated within this framework \cite{AGS}. They also prove sharp estimates on the convergence of a time-discretization of the gradient flow, often known as the \emph{discrete gradient flow} or \emph{JKO scheme} \cite{JKO}, which is useful for constructing and approximating solutions to the gradient flow. A common strategy in studying properties of gradient flows is to first prove a property holds at the discrete time level and then show that it is preserved in the passage to continuous time (c.f. \cite{AlexanderKimYao, AGS, MRS}).

In contrast to the case of convex energies, the theory of Wasserstein gradient flow for \emph{nonconvex} (or, more precisely, non-semiconvex) energies is comparatively undeveloped. Such energies arise in a range of applications in physics and biology, including models of vortex motion in superconductors (c.f. \cite{AmbrosioSerfaty, AmbrosioMaininiSerfaty}), biological chemotaxis (c.f. \cite{BlanchetCalvezCarrillo, BlanchetCarlenCarrillo, CarrilloLisiniMainini}), and biological swarming (c.f. \cite{BertozziLaurentRosado, CraigTopaloglu, CraigKimYao}). On one hand, existence of generalized notions of gradient flow and convergence of the discrete gradient flow scheme can still be shown in the nonconvex case \cite{AGS}. On the other hand, uniqueness and stability of solutions have only been considered on a case by case basis and estimates on the rate of convergence of the discrete gradient flow scheme have remained open, in spite of the fact that such estimates can be an essential ingredient in semiconvex gradient flow (c.f. \cite{AlexanderKimYao}).

In addition to energies that fall outside the scope of well-posedness results on convex gradient flow, there has also been interest in energies that, while convex, actually possess better stability properties than their convexity would indicate. Such energies arise, for example, in models of granular media, and Carrillo, McCann, and Villani demonstrated how the the notion of a \emph{modulus of convexity} $\omega(x)$ could be used to prove superior contraction inequalities than were available in the convex theory \cite{CarrilloMcCannVillani}. 

The goal of the present work is to unify and extend the existing theory of nonconvex Wasserstein gradient flow.  Our approach is strongly influenced by the uniqueness results proved by Ambrosio and Serfaty \cite{AmbrosioSerfaty} and Carrillo, Lisini, and Mainini \cite{CarrilloLisiniMainini} for nonconvex, nonlocal interaction energies with Newtonian repulsion or attraction. Following their terminology, we introduce a more general notion of convexity, which we call \emph{$\omega$-convexity}. In particular, we say that $E$ is $\omega$-convex if there exists a constant $\lambda_\omega \in \R$ and a nonnegative, continuous, nondecreasing \emph{modulus of convexity} $\omega(x)$, which vanishes only at $x=0$, so that for any Wasserstein geodesic $\mu_\alpha \in \P_2(\Rd)$ with $\alpha \in [0,1]$,
\[E(\mu_\alpha) \leq (1-\alpha) E(\mu_0) + \alpha E (\mu_1) -  \frac{\lambda_\omega}{2} \left[ (1-\alpha) \omega(\alpha^2 W_2^2(\mu_0,\mu_1)) + \alpha \omega( (1-\alpha)^2 W_2^2(\mu_0,\mu_1)) \right] .\]
When $\omega(x) = x$, this reduces to the standard notion of semiconvexity. A sufficient condition for $\omega$-convexity, provided that $\alpha \mapsto E(\mu_\alpha)$ is sufficiently regular, is that $E$ satisfies an ``above the tangent line'' inequality:
\begin{align*}
 E(\mu_1) - E(\mu_0) - \left. \frac{d}{d \alpha} E(\mu_\alpha) \right|_{\alpha = 0} \geq \frac{\lambda_\omega}{2} \omega \left( W^2_{2}(\mu_0,\mu_1) \right) . \end{align*}
 (See Proposition \ref{omega convex sufficient}.)

Our generalization of the Wasserstein gradient flow theory from the semiconvex to $\omega$-convex case is motivated by the generalization of the theory of ordinary differential equations
\[ \dot{x}(t) = \omega(x(t)) , \quad x:[0,T] \to \R , \quad \omega:\R \to \R , \]
from functions $\omega(x)$ that are Lipschitz to those satisfying Osgood's criterion. Semiconvex Wasserstein gradient flow corresponds to the choice $\omega(x) = x$, in which case  the above ordinary differential equation is well-posed by classical Cauchy-Lipschitz theory. More generally, if there exists a nonnegative, continuous, nondecreasing function $\tomega(x)$, which vanishes only at $x=0$, satisfies $\int_0^1 \frac{dx}{\tomega(x)} = +\infty$, and for which we have
\[ |\omega(x) - \omega(y)| \leq \tomega(|x-y|) \quad \text{ for all } x, y \in \R , \]
then $\omega(x)$ satisfies \emph{Osgood's criterion}, and the above ODE is well-posed locally in time (c.f. \cite[Corollary II.6.2]{Hartman}).

The existence and uniqueness of solutions to this ordinary differential equation are key elements in our analysis of the corresponding Wasserstein gradient flows. Consequently, an essential assumption in the present work is that the modulus of convexity $\omega(x)$ satisfies Osgood's criterion. We additionally require that $\tomega(x)$ decays quickly as $x$ approaches zero---specifically that $\tomega(x) = o(\sqrt{x})$. (In fact, such decay is a consequence of Osgood's criterion whenever $\lim_{x \to 0}\tomega(x)/\sqrt{x}$ exists. See Remark \ref{decay remark}.)

Three key examples of functions $\omega(x)$ satisfying these assumptions are
\begin{enumerate}[(i)]
\item Lipschitz modulus of convexity:  $\omega(x) = \tomega(x) = x$;
 \item polynomial modulus of convexity: for $p\geq 0$, 
 \[ \omega(x) = \begin{cases} x^{p+1} &\text{ for }0 \leq x \leq 1, \\ 1 &\text{ for } x >1, \end{cases} \quad \text{ and } \quad \tomega(x) = x ; \]
\item log-Lipschitz modulus of convexity: \label{log Lipschitz omega} \begin{align*}
 \omega(x) = \tomega(x) = \begin{cases} x |\log x| & \text{ if } 0 \leq x \leq e^{-1-\sqrt{2}} , \\ \sqrt{x^2+ 2(1+\sqrt{2})e^{-1-\sqrt{2}}x} & \text{ if }x > e^{-1-\sqrt{2}}.  \end{cases} \end{align*}
\end{enumerate}
In section \ref{main assumptions and examples}, we provide several examples of energies that are $\omega$-convex for these choices of $\omega(x)$, including the Keller-Segel energy of bounded densities (Example \ref{Newt att ex}), potential energies from models of granular media (Example \ref{potential granular}), and the Chapman-Rubinstein-Schatzman energy for vortex motion in superconductors (Example \ref{vortex example}).

Given an $\omega$-convex energy, for an Osgood modulus of convexity satisfying $\tomega(x) = o(\sqrt{x})$, we prove our main result: that its Wasserstein gradient flow is well-posed and can be approximated by the discrete gradient flow, with quantitative rate of convergence. We prove the gradient flow is unique by showing a contraction  inequality quantifying its stability. In particular, for the specific examples of $\omega(x)$ described above, we obtain the following rates of contraction/expansion (see Example \ref{rate of contraction example}):
\begin{enumerate}[(i)]
\item Lipschitz modulus of convexity:  $W_2(\mu(t),\nu(t)) \leq e^{-\lambda_\omega t} W_2(\mu(0),\nu(0))$ for all $t \geq 0$;
 \item polynomial modulus of convexity: $W_2(\mu(t),\nu(t)) \leq W_2(\mu(0),\nu(0))(1+2 \lambda_\omega pt W_2^{2p}(\mu(0),\nu(0)))^{-1/2p}$ \\
 for $0 \leq W_2(\mu(0),\nu(0)) \leq 1$ and $t\geq0$ sufficiently small;
\item log-Lipschitz modulus of convexity: \label{log lip contr} $W_2(\mu(t), \nu(t)) \leq W_2(\mu(0),\nu(0))^{1/e^{-2 \lambda_\omega t}}$ \\
for $0 \leq W_2(\mu(0),\nu(0)) \leq e^{-1-\sqrt{2}}$ and $t \geq 0$ sufficiently small.
\end{enumerate}
Our rate of contraction for a Lipschitz modulus of convexity, i.e. a semiconvex energy, coincides with the rate obtained by Ambrosio, Gigli, and Savar\'e \cite[Theorem 4.0.4]{AGS}, and our rate of contraction for a polynomial modulus of convexity coincides with the rate shown by Carrillo, McCann, and Villani in the case $\lambda_\omega \geq 0$ \cite[Theorem 1]{CarrilloMcCannVillani}. Our result also provides a rate of contraction for a polynomial modulus of convexity when $\lambda_\omega <0$, which was previously unknown. Likewise, our rate of contraction for the log-Lipchitz case coincides with the rate of contraction obtained by Carrillo, Lisini, and Mainini \cite[Theorem 3.1]{CarrilloLisiniMainini} in the specific case of the bounded Keller-Segel energy with $\lambda_\omega <0$. Our results also extend to a log-Lipschitz modulus of convexity when $\lambda_\omega > 0$.

Along with these results on the stability of the gradient flow, we also estimate the rate at which the discrete gradient flow approximation $\mu^n_{t/n}$ converges to the gradient flow $\mu(t)$. For the three examples of $\omega(x)$ listed above, we obtain the following rates of convergence:
\begin{enumerate}[(i)]
\item Lipschitz and polynomial modulus of convexity:  $W_2(\mu^n_{t/n}, \mu(t)) =O(n^{-1/4})$; \label{lip rate}
 \item  log-Lipschitz modulus of convexity: \label{log lip rate} $W_2(\mu^n_{t/n},\mu(t)) = \mathcal{O} \left( \left[  n^{-1/2} \log(n) \right]^{1/2e^{2\lambda_\omega^- t}}  \right)$.
\end{enumerate}
(The rates of convergence obtained for the Lipschitz and polynomial moduli of convexity coincide due to the fact that our rates depend on the function $\tomega(x)$, and $\tomega(x) = x$ in both cases.) While the estimates for a Lipschitz modulus of convexity are not optimal (see instead \cite[Theorem 4.0.4]{AGS}), we believe our result is still of interest, since it provides the first quantitative estimates on convergence of the discrete gradient flow for energies that are not semiconvex---including energies with merely a log-Lipschitz modulus of convexity.

We conclude by applying our results to gradient flows of constrained nonlocal interaction energies of the form
\begin{align*}
\mathcal{W}_p(\mu) &:= \begin{cases}
\frac{1}{2} \int \mu(x) W*\mu(x) dx &\text{if $\| \mu\|_p \leq C_p$}, \\ +\infty & \text{otherwise,}  \end{cases}
\end{align*}
along with related nonconvex drift diffusion energies.
In the specific case that $p=+\infty$, such energies have arisen in previous work on bounded solutions to the Keller-Segel equation \cite{CarrilloLisiniMainini}, vortex motion in superconductors \cite{AmbrosioSerfaty}, and Gamma convergence of regularized interaction energies \cite{CraigTopaloglu}. For general $p$, these energies relate to previous work on well-posedness of $L^p$ solutions to the aggregation equation \cite{BertozziLaurentRosado}. (See Example \ref{Lp agg example}.) Our particular motivation for studying these types of energies comes from a constrained interaction model, with $W = \Delta^{-1}$ and $p=+\infty$, studied by the author, Kim, and Yao \cite{CraigKimYao}.  In fact, it was the absence of quantitative estimates on convergence of the discrete gradient flow for this specific energy that inspired the development of the theory of nonconvex Wasserstein gradient in the present work.

Our paper is organized as follows. In section \ref{section 2}, we begin by recalling fundamental properties of the Wasserstein metric and define the gradient flow and discrete gradient flow of $\omega$-convex energies. In section \ref{section 3}, we use a Crandall and Liggett type argument to quantify the rate of convergence for the discrete gradient flow, generalizing previous work by the author from the semiconvex case \cite{Craig}. We then use this convergence result to conclude that the gradient flow of an $\omega$-convex energy is well-posed and obtain some of its basic properties, including a contraction inequality quantifying its stability. In section \ref{applicationssection}, we apply these results to gradient flows of constrained nonlocal interaction energies and nonconvex drift diffusion energies. Finally, in the appendix, we discuss how our results can be extended to gradient flows of energies that vary in time---an application that was also motivated by work with Kim and Yao \cite{CraigKimYao}.
 
There are several directions for future work. First, it would be interesting to investigate the sharpness of the assumptions we place on the energy functional. While we believe the assumption that the modulus of convexity $\omega(x)$ is Osgood is necessary, the well-posedness of the gradient flow might still be obtained without the assumption that $\tomega(x) = o(\sqrt{x})$. On the other hand, we don't believe that quantitative estimates on the rate of convergence of the discrete gradient flow can be obtained without this rate of decay. A second direction for future work would be to identify further examples of energy functionals that satisfy our notion of $\omega$-convexity, in addition to those examples described in sections \ref{section 2} and \ref{applicationssection}. In particular, it may be possible to show that, for certain choices of initial data, the gradient flows of even less well-behaved energies remain in a set for which the energy, when restricted to this set, is $\omega$-convex. A third direction for future work would be to further develop the extension of our results to time-dependent energies. While we describe one possible extension in the appendix, such energies arise in a variety of applications (c.f. \cite{CraigKimYao, PetrelliTudorascu}), and sharp conditions to ensure well-posedness of their gradient flows would be a useful addition to the existing theory.

\section{Wasserstein gradient flow and $\omega$-convexity} \label{section 2}

\subsection{Wasserstein metric, geodesics, and generalized geodesics}

We begin by recalling key properties of the Wasserstein metric. For further background, we refer the reader to the books by Ambrosio, Gigli, and Savar\'e \cite{AGS} and Villani \cite{Villani}.

Let $\P(\Rd)$ denote the set of probability measures on $\Rd$. If a measure $\mu$ is absolutely continuous with respect to Lebesgue measure ($\mu \ll \mathcal{L}^d$), we will slightly abuse notation and identify the measure with its density: $d\mu(x) = \mu(x)dx$. For example, we write $\|\mu\|_{L^p} < +\infty$ if $d\mu(x) = \mu(x) dx$ and $\mu(x) \in L^p(\Rd)$.

Given $\mu, \nu \in \P(\Rd)$, a measureable function $\bt : \Rd \to \Rd$ \emph{transports $\mu$ onto $\nu$} if $\nu(B) = \mu(\bt^{-1}(B))$ for all measurable $B \subseteq \Rd$. In this case, $\nu$ is \emph{the push-forward of $\mu$ under $\bt$}, and we write $\nu = \bt \# \mu$. The set of \emph{transport plans} from $\mu$ to $\nu$ is
\[ \Gamma(\mu,\nu) = \{ \gamma \in \P(\Rd \times \Rd) : \pi_1 \# \gamma = \mu, \pi_2 \# \gamma = \nu \}, \]
where $\pi_1$ and $\pi_2$ denote the projections onto the first and second components of $\Rd \times \Rd$. The \emph{Wasserstein distance} between $\mu$ and $\nu$ is then given by
\[ W_2(\mu,\nu) = \inf \left \{ \left ( \int_{\Rd \times \Rd} |x - y|^2 d \gamma(x,y) \right)^{1/2} : \gamma \in \Gamma(\mu,\nu) \right \} . \]
If $W_2(\mu,\nu) < +\infty$, the  infimum is attained, and we denote the set of \emph{optimal transport plans} by $\Gamma_0(\mu,\nu)$.

If $\mu$ does not charge sets of $d-1$ dimensional Hausdorff measure, then the optimal transport plan from $\mu$ to $\nu$ is unique and of the form $(\id \times \bt) \# \mu$, where $\id(x) = x$ is the identity transformation. (See Gigli for sharp conditions guaranteeing the existence of such a plan \cite{Gigli}.) The \emph{optimal transport map} $\bt = \bt_\mu^\nu$ is unique $\mu$-almost everywhere, and if there also exists an optimal transport map from $\nu$ to $\mu$, then $\bt_\mu^\nu \circ \bt_\nu^\mu = \id$ $\mu$-almost everywhere. Finally, a measurable map satisfying $\bt \# \mu = \nu$ is optimal if and only if it is cyclically monotone $\mu$-a.e. \cite{McCannExistence}.

A technical issue that arises when working with the Wasserstein distance on $\P(\Rd)$ is that there exist measures that are infinite distances apart. Consequently, for simplicity, we restrict our attention to the set of measures with finite second moment
\[\P_{2}(\Rd) = \left\{ \mu \in \P(\Rd) : \int|x|^2 d\mu < +\infty \right\} , \]
so that $(\P_{2}(\Rd), W_2)$ is a metric space. We only use the second moment condition to ensure finiteness of the Wasserstein metric, and our arguments also extend to the metric space $(\P_{2,\nu_0}(\Rd),W_2)$ where
\[ \P_{2, \nu_0}(\Rd) = \{\mu \in \P(\Rd) : W_2(\mu,\nu_0) \leq +\infty \}. \]
(See previous work by the author \cite{Craig} for further details on this extension.)

Along with its metric structure, $(\P_2(\Rd),W_2)$ is complete, and convergence can be characterized as
\begin{center}
	\begin{tabular}{lcl}
		$W_2(\mu_n,\mu)\rightarrow 0$ & $\Longleftrightarrow$ & $\mu_n\rightarrow\mu$ weak-$*$ in $\P(\Rd)$ and $\int_{\Rd}|x|^2\,d\mu_n(x)\rightarrow\int_{\Rd}|x|^2\,d\mu(x)$.
	\end{tabular}
\end{center}
Furthermore, $(\P_{2}(\Rd), W_2)$ is a \emph{geodesic space} \cite[Theorem 7.2.2]{AGS},  since any two measures $\mu_0, \mu_1 \in \P_2(\Rd)$ are connected by a \emph{geodesic}  $\mu_\alpha \in \P_2(\Rd)$, $\alpha \in [0,1]$, satifying
\[ W_2(\mu_\alpha,\mu_\beta) = |\beta-\alpha| W_2(\mu_0,\mu_1) \text{ for all }\alpha, \beta \in [0,1] . \]
By \cite[Theorem 7.2.2]{AGS}, all geodesics are of the form 
\begin{align*} 
\mu_\alpha = \left( (1-\alpha)\pi_1 + \alpha \pi_2 \right)  \# \gamma , \ \text{ for } \gamma \in \Gamma_0(\mu_0,\mu_1) .
\end{align*}
In particular, if $\bt_{\mu_0}^{\mu_1}$ is the unique optimal transport map, then the geodesic $\mu_\alpha$ from $\mu_0$ to $\mu_1$ is unique and given by $\mu_\alpha = \left( (1-\alpha)\id + \alpha \mathbf{t}_{\mu_0}^{\mu_1} \right) \# \mu_0$.

A recurring difficulty when analyzing gradient flows in the Wasserstein metric is that, unlike in the Hilbert space case, the square metric $\mu \mapsto W_2^2(\nu, \mu)$ is not convex \cite[Example 9.1.5]{AGS}. To circumvent this difficulty, Ambrosio, Gigli, and Savar\'e introduced the notion of \emph{generalized geodesics}  \cite[Lemma 9.2.1, Definition 9.2.2]{AGS}.

\begin{definition} \label{gengeo}
Given $\mu_0, \mu_1, \nu \in \P_{2}(\Rd)$, a generalized geodesic from $\mu_0$ to $\mu_1$ with base $\nu$ is a curve of the form
\[ \mu_\alpha = ((1-\alpha) \pi^1 + \alpha \pi^2 ) \# \bnu , \text{ where } \bnu \in \P(\Rd \times \Rd \times \Rd) , \  \pi^{1,3} \# \bnu \in \Gamma_0(\mu_0,\nu) , \text{ and } \pi^{2,3} \# \bnu \in \Gamma_0(\mu_1,\nu) . \]
\end{definition} 
\noindent Geodesics are a particular type of generalized geodesic, when $\nu$ coincides with either $\mu_0$ or $\mu_1$.

Alongside the notion of generalized geodesics, we will also consider the following type of transport distance \cite[Definition 1.14]{Craig}:
\begin{definition} \label{transport distance def}
For $\nu \in \P_2(\Rd)$ that does not charge sets of $d-1$ dimensional Hausdorff measure, the \emph{$(2, \nu)$-transport metric} $W_{2, \nu}: \P_{2}(\Rd) \times \P_{2}(\Rd) \to \R$ is given by
\begin{align*} 
 W_{2,\nu}&(\mu_0,\mu_1) := \left( \int |\mathbf{t}_{\nu}^{\mu_0} -
\mathbf{t}_{\nu}^{\mu_1} |^2 d \nu \right)^{1/2} .
\end{align*}
More generally, for any $\nu, \mu_0,\mu_1 \in \P_2(\Rd)$,  define
\begin{align*} 
& W_{2,\bnu}(\mu_0,\mu_1) := \left( \int |\pi_1 -
\pi_2 |^2 d \bnu \right)^{1/2} ,
\end{align*}
where $\bnu \in \P(\Rd \times \Rd \times \Rd)$ satisfies $\pi^{1,3} \# \bnu \in \Gamma_0(\mu_0,\nu),  \pi^{2,3} \# \bnu \in \Gamma_0(\mu_1,\nu)$.
\end{definition}
\begin{remark} \label{transport metric bounds W2}
This provides an upper bound for the Wasserstein metric: ${W_{2,\bnu}(\mu_0,\mu_1) \geq W_2(\mu_0,\mu_1)}$.
\end{remark}

The \emph{$(2, \nu)$-transport metric} is an example of a \emph{pseudo-Wasserstein metric}, as introduced by  Ambrosio, Gigli, and Savar\'e \cite[Equation 9.2.5]{AGS}. When $\nu \in \P_2(\Rd)$ does not charge sets of $d-1$ dimensional Hausdorff measure, then $W_{2,\nu}$ is a metric on $\P_{2}(\Rd)$  \cite[Proposition 1.15]{Craig}. More generally, for any fixed $\rho \in \P_{2}(\Rd)$ and $\bnu$ as in Definition \ref{transport distance def}, $\mu \mapsto W^2_{2,\bnu}(\mu,\rho)$ is convex along the generalized geodesic induced by $\bnu$  \cite[Proposition 1.15]{Craig}:
\begin{align} \label{convexitytransportmetric}
W_{2,\bnu}^2(\mu_\alpha, \rho) = (1-\alpha) W_{2,\bnu}^2(\mu_0,\rho) + \alpha W_{2,\bnu}^2(\mu_1,\rho) - \alpha (1-\alpha) W_{2,\bnu}^2(\mu_0,\mu_1) , \quad \forall \alpha \in [0,1] .
\end{align}
This identity is a key element in our proofs of Proposition \ref{discreteEVI} and Theorem \ref{recineqthm}.

\subsection{$\omega$-convexity}
The main goal of the present work is to further develop the theory of gradient flows for energies that satisfy a more general notion of convexity, which we call $\omega$-convexity. This parallels the definition introduced by Carrillo, Lisini, and Mainini \cite[Theorem 4.1]{CarrilloLisiniMainini} for a specific choice of function $\omega(x)$ satisfying $\omega(x) = x|\log x|$, for $x$ small, and $\omega(x) = O(x)$, for $x$ large. (See Example \ref{omega examples} (\ref{log Lipschitz omega}).)
\begin{definition}[notions of convexity] \label{convexitydef}
Given an energy functional $E: \P_{2}(\Rd) \to \R \cup \{+\infty\}$, a curve $\mu_\alpha \in \P_{2}(\Rd)$, and a function $d: \P_2(\Rd) \times \P_2(\Rd) \to [0, +\infty)$,
\begin{enumerate}[(i)]
\item $E$ is \emph{convex} along $\mu_\alpha$ if \label{first convex}
\[ E(\mu_\alpha) \leq (1-\alpha) E(\mu_0) + \alpha E (\mu_1); \]
\item $E$ is \emph{semiconvex} along $\mu_\alpha$ with respect to $d$ if, for some $\lambda \in \R$,
\[E(\mu_\alpha) \leq (1-\alpha) E(\mu_0) + \alpha E (\mu_1) - \alpha (1-\alpha) \frac{\lambda}{2}d(\mu_0,\mu_1)^2;\] \label{second convex}
\item $E$ is \emph{$\omega$-convex} along $\mu_\alpha$ with respect to $d$ if, for some $\omega: [0,+\infty) \to [0,+\infty)$ and $\lambda_\omega \in \R$, \label{omega convex def}
\[E(\mu_\alpha) \leq (1-\alpha) E(\mu_0) + \alpha E (\mu_1) -  \frac{\lambda_\omega}{2} \left[ (1-\alpha) \omega(\alpha^2 d(\mu_0,\mu_1)^2) + \alpha \omega( (1-\alpha)^2 d(\mu_0,\mu_1)^2) \right],\] \label{third convex}
where the \emph{modulus of convexity} $\omega(x)$ is continuous, nondecreasing, and vanishes only at $x=0$. 
\end{enumerate}
These notions of convexity along curves lead to corresponding notions of convexity along geodesics and generalized geodesics in the Wasserstein metric:
\begin{enumerate}[(a)]
\item $E$ is convex/semiconvex/$\omega$-convex along \emph{geodesics} if, for any $\mu_0,\mu_1 \in \P_2(\Rd)$, there exists a geodesic $\mu_\alpha$ from $\mu_0$ to $\mu_1$ so that (\ref{first convex})/(\ref{second convex})/(\ref{third convex}) holds, with $d = W_2$,
\item $E$ is convex/semiconvex/$\omega$-convex along \emph{generalized geodesics} if, for any $\mu_0, \mu_1, \nu \in \P_2(\Rd)$, there exists a generalized geodesic $\mu_\alpha$ from $\mu_0$ to $\mu_1$ with base $\nu$ so that (\ref{first convex})/(\ref{second convex})/(\ref{third convex}) holds, with $d= W_{2,\bnu}$.
\end{enumerate}
\end{definition}

We will often use $\lambda^-= \max\{ 0, -\lambda\}$ to denote the negative part of $\lambda$. Using this notation, the relative strengths of the above convexity assumptions are as follows:
\begin{itemize}
\item Convexity and semiconvexity with $\lambda =0$ are equivalent; furthermore, if $E$ is convex, then $E$ is semiconvex  for all $\lambda \leq 0$.
\item Semiconvexity and $\omega$-convexity with $\omega(x) =x$ are equivalent; furthermore, if $E$ is semiconvex, then $E$ is $\omega$-convex for all $\omega(x) \geq x$ and $\lambda_\omega^- \geq  \lambda^-$.
\item If $E$ is convex/semiconvex/$\omega$-convex along generalized geodesics, then, in particular, it possesses the same convexity property along geodesics.
\end{itemize}

In order to describe fundamental properties of $\omega$-convex energies, we denote the \emph{domain} of an energy by $D(E) = \{ \mu : E(\mu) < +\infty \}$ and say $E$ is \emph{proper} if $D(E) \neq \emptyset$. Likewise, we define \emph{metric local slope} of an energy $E$ by
\[ |\partial E|(\mu) =  \limsup_{\nu \to \mu} \frac{(E(\mu) - E(\nu))^+}{W_2(\mu,\nu)} , \text{ for all }\mu \in D(E). \]

A key property of convex and semiconvex functions is that they satisfy an ``above the tangent line'' inequality, often referred to as an HWI inequality in the context of Wasserstein gradient flow. As long as $\omega(x) = o(\sqrt{x})$ as $x \to 0$, $\omega$-convex functions satisfy an analogous inequality:

\begin{proposition}[HWI inequality] \label{HWIproposition}
If $E$ is $\omega$-convex along geodesics and $\omega(x) = o(\sqrt{x})$ as $x \to 0$, then
for all $\mu_0 \in D(|\partial E|) $
\[ E(\mu_0) - E(\mu_1) \leq |\partial E|(\mu_0) W_2(\mu_0,\mu_1) - \frac{\lambda_\omega}{2} \omega(W^2_{2}(\mu_0,\mu_1)) . \]
\end{proposition}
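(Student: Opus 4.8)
The plan is to run the classical ``above the tangent line'' argument. Fix $\mu_0 \in D(|\partial E|)$ and $\mu_1 \in \P_2(\Rd)$. If $E(\mu_1) = +\infty$ the asserted inequality holds trivially (the left side is $-\infty$), and if $W := W_2(\mu_0,\mu_1) = 0$ it holds because $\omega(0) = 0$; so assume $W > 0$ and $E(\mu_1) < +\infty$. Let $\mu_\alpha$ be a geodesic from $\mu_0$ to $\mu_1$ along which the $\omega$-convexity inequality (\ref{third convex}) holds with $d = W_2$. Using $(1-\alpha)E(\mu_0) + \alpha E(\mu_1) = E(\mu_0) - \alpha(E(\mu_0) - E(\mu_1))$, rearranging that inequality, and dividing by $\alpha \in (0,1)$ gives
\[ E(\mu_0) - E(\mu_1) \leq \frac{E(\mu_0) - E(\mu_\alpha)}{\alpha} - \frac{\lambda_\omega}{2}\left[\frac{1-\alpha}{\alpha}\,\omega(\alpha^2 W^2) + \omega\big((1-\alpha)^2 W^2\big)\right]. \]
(Note $E(\mu_\alpha)$ is finite, since (\ref{third convex}) bounds it above by a finite quantity.) The proof then reduces to taking $\limsup_{\alpha \to 0^+}$ of the right-hand side.

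For the first term, since $\mu_\alpha$ is a geodesic we have $W_2(\mu_0,\mu_\alpha) = \alpha W$, so in particular $\mu_\alpha \to \mu_0$ as $\alpha \to 0$, and
\[ \frac{E(\mu_0) - E(\mu_\alpha)}{\alpha} \leq \frac{(E(\mu_0) - E(\mu_\alpha))^+}{W_2(\mu_0,\mu_\alpha)}\, W . \]
Taking $\limsup$ as $\alpha \to 0$ and comparing with the definition of the metric local slope bounds this contribution by $|\partial E|(\mu_0)\, W$.

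The key step, and the one place where the hypothesis $\omega(x) = o(\sqrt{x})$ is used, is the second term. By continuity of $\omega$, $\omega\big((1-\alpha)^2 W^2\big) \to \omega(W^2)$. For the remaining piece, with $x = \alpha^2 W^2$ we have $\sqrt{x} = \alpha W \to 0$, and $\omega(x)/\sqrt{x} \to 0$, so
\[ \frac{1-\alpha}{\alpha}\,\omega(\alpha^2 W^2) = (1-\alpha)\, W \cdot \frac{\omega(\alpha^2 W^2)}{\alpha W} \longrightarrow 0 ; \]
positivity of $W$ is exactly what makes this work. Hence the bracket converges to $\omega(W^2)$, and because it has a genuine limit, the $\limsup$ of the full right-hand side is at most $|\partial E|(\mu_0)\, W - \frac{\lambda_\omega}{2}\omega(W^2)$, which is the claimed estimate. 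The only genuine subtlety is this last limit; everything else is bookkeeping with the geodesic identity $W_2(\mu_0,\mu_\alpha) = \alpha W$ and the definition of $|\partial E|$. Note that the sign of $\lambda_\omega$ plays no role, since $\omega \geq 0$ and the bracket is multiplied throughout by the fixed constant $-\lambda_\omega/2$.
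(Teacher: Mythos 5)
Your proof is correct, and it is exactly the argument the paper intends: the paper simply cites the semiconvex case \cite[Theorem 2.4.9]{AGS}, whose ``rearrange the convexity inequality, divide by $\alpha$, and let $\alpha \to 0$ using the definition of $|\partial E|$'' scheme you have reproduced, with the hypothesis $\omega(x) = o(\sqrt{x})$ used precisely to kill the term $\frac{1-\alpha}{\alpha}\,\omega(\alpha^2 W_2^2(\mu_0,\mu_1))$, just as in the paper's own proof of Proposition \ref{discreteEVI}. No gaps; the trivial cases and the finiteness of $E(\mu_\alpha)$ are handled appropriately.
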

\noindent The proof of this proposition is identical to the proof in the semiconvex case \cite[Theorem 2.4.9]{AGS}.

Using this inequality, one may also show that if the enegy $E$ is lower semicontinuous, its metric local slope $|\partial E|$ is also lower semicontinuous.
\begin{proposition}[lower semicontinuity of slope] \label{slope lsc}
Suppose $E$ is lower semicontinuous and $\omega$-convex along geodesics, with $\omega(x) = o(\sqrt{x})$ as $x \to 0$. Then $|\partial E|$ is lower semicontinuous.
\end{proposition}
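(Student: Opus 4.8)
The plan is to mimic the classical argument for lower semicontinuity of the metric slope in the semiconvex case (see \cite[Corollary 2.4.10]{AGS}), with the Lipschitz modulus $\omega(x) = x$ replaced by the Osgood modulus $\omega(x) = o(\sqrt{x})$. The crucial input is a \emph{pointwise upper bound} on $|\partial E|$ in terms of the energy differences, which one extracts by inverting the HWI inequality of Proposition \ref{HWIproposition}. First I would show that under the stated hypotheses,
\[ |\partial E|(\mu) = \sup_{\nu \neq \mu} \left( \frac{E(\mu) - E(\nu)}{W_2(\mu,\nu)} + \frac{\lambda_\omega}{2} \frac{\omega(W_2^2(\mu,\nu))}{W_2(\mu,\nu)} \right)^+ . \]
The inequality ``$\leq$'' here is exactly Proposition \ref{HWIproposition}, after moving the slope term to the left. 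The reverse inequality ``$\geq$'' follows from the definition of $|\partial E|$ as a $\limsup$ together with the decay condition $\omega(x) = o(\sqrt{x})$: as $\nu \to \mu$ along a near-optimal sequence realizing the $\limsup$, the correction term $\omega(W_2^2(\mu,\nu))/W_2(\mu,\nu) \to 0$, so the supremum on the right is at least $\limsup_{\nu \to \mu} (E(\mu)-E(\nu))^+/W_2(\mu,\nu) = |\partial E|(\mu)$.

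Once $|\partial E|$ is written as a supremum of a family of functions of $\mu$, lower semicontinuity would follow if each function in the family were lower semicontinuous, since a supremum of lower semicontinuous functions is lower semicontinuous. Fix $\nu$; the map $\mu \mapsto E(\mu) - E(\nu)$ is lower semicontinuous by hypothesis on $E$ (with the caveat that $E(\nu)$ must be finite, so the supremum is effectively over $\nu \in D(E)$, which is harmless since $|\partial E|$ is only considered on $D(E) \subseteq \overline{D(E)}$ and points of infinite energy contribute nothing). The map $\mu \mapsto W_2(\mu,\nu)$ is continuous, and $x \mapsto \omega(x)$ is continuous, so $\mu \mapsto \omega(W_2^2(\mu,\nu))/W_2(\mu,\nu)$ is continuous away from $\mu = \nu$. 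The subtlety is the behavior at $\mu = \nu$ and the sign of $\lambda_\omega$: when $\lambda_\omega \geq 0$ the whole expression inside the $(\cdot)^+$ is lower semicontinuous in $\mu$ as a sum of a lower semicontinuous function and a continuous one (extended by its limit $0$ at $\mu = \nu$), and $(\cdot)^+$ preserves lower semicontinuity; when $\lambda_\omega < 0$ one instead absorbs the (now bounded, vanishing-at-$\nu$) term $\tfrac{\lambda_\omega}{2}\omega(W_2^2)/W_2$ into the fraction and notes it is still continuous, so the argument is unchanged.

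The main obstacle I anticipate is handling the degeneracy at $\mu = \nu$ cleanly: the quotient $\omega(W_2^2(\mu,\nu))/W_2(\mu,\nu)$ is a $0/0$ form there, and one needs the decay hypothesis $\omega(x) = o(\sqrt x)$ precisely to conclude that this quotient extends continuously by the value $0$. Without this decay the correction term would not vanish, the supremum representation would fail, and the argument would break—this is exactly why the hypothesis appears. A secondary technical point is to confirm that restricting the supremum to $\nu \in D(E)$ does not change its value and that it is still attained as a genuine $\limsup$ along sequences with $\nu \to \mu$, which is where one uses that $D(E)$ is dense enough near any $\mu \in D(|\partial E|)$; this is standard once Proposition \ref{HWIproposition} is in hand. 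Everything else is a routine application of the stability of lower semicontinuity under suprema and under the positive-part operation.
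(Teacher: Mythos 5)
Your proposal is correct and follows essentially the same route as the paper: the paper's proof consists precisely of deriving the supremum representation $|\partial E|(\mu) = \sup_{\nu \neq \mu} \left( \frac{E(\mu)-E(\nu)}{W_2(\mu,\nu)} + \frac{\lambda_\omega}{2}\frac{\omega(W_2^2(\mu,\nu))}{W_2(\mu,\nu)}\right)^+$ from the HWI inequality (with the decay $\omega(x)=o(\sqrt{x})$ making the correction term vanish in the $\limsup$), and then invoking the argument of \cite[Corollary 2.4.10]{AGS}, i.e.\ lower semicontinuity of a supremum of lower semicontinuous functions, exactly as you spell out. The only cosmetic point is that your labels ``$\leq$''/``$\geq$'' for the two directions read more naturally with the supremum on the left of the identity; the justifications you attach to each direction are the correct ones.
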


\begin{proof}
By Proposition \ref{HWIproposition},
\begin{align} \label{HWI slope formula} |\partial E|(\mu) = \sup_{\nu \neq \mu} \left( \frac{E(\mu)- E(\nu)}{W_2(\mu,\nu)} + \frac{\lambda_\omega}{2} \frac{ \omega(W_2^2(\mu,\nu))}{W_2(\mu,\nu)} \right)^+ .
\end{align}
Arguing as in \cite[Corollary 2.4.10]{AGS} gives the result.
\end{proof}

In order to verify that a given energy is $\omega$-convex, it is often cumbersome to show Definition \ref{convexitydef} (\ref{omega convex def}) directly. We close this section with a sufficient condition that is often more convenient in practice.

\begin{proposition} \label{omega convex sufficient}
Suppose that for all generalized geodesics $\mu_\alpha$ from $\mu_0$ to $\mu_1$ with base $\nu$ such that $\mu_0 ,\mu_1 \in D(E)$, $E(\mu_\alpha)$ is differentiable for $\alpha \in [0,1]$, $\frac{d}{d \alpha} E(\mu_\alpha) \in L^1([0,1])$, and 
\begin{align*}
 E(\mu_1) - E(\mu_0) - \left. \frac{d}{d \alpha} E(\mu_\alpha) \right|_{\alpha = 0} \geq \frac{\lambda_\omega}{2} \omega \left( W^2_{2, \bnu}(\mu_0,\mu_1) \right) . \end{align*}
Then $E$ is $\omega$-convex along generalized geodesics. Furthermore, if $E$ merely satisfies these assumptions in the specific case that $\nu =\mu_0$ or $\nu=\mu_1$, then $E$ is $\omega$-convex along geodesics.
\end{proposition}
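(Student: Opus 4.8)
The plan is to reduce the claim to a one-dimensional statement about the function $\phi(\alpha) := E(\mu_\alpha)$ together with two applications of the hypothesized ``above the tangent line'' inequality. The first step is the structural observation that the restriction of a generalized geodesic, and its reversal, are again generalized geodesics with the \emph{same} base. Concretely, if $\bnu \in \P(\Rd \times \Rd \times \Rd)$ induces the generalized geodesic $\mu_\alpha$ from $\mu_0$ to $\mu_1$ with base $\nu$ and $\alpha \in (0,1)$ is fixed, then $\beta \mapsto \mu_{(1-\beta)\alpha}$ (which runs from $\mu_\alpha$ to $\mu_0$) is the generalized geodesic with base $\nu$ induced by $(x_1,x_2,x_3) \mapsto ((1-\alpha)x_1 + \alpha x_2,\, x_1,\, x_3) \# \bnu$, while $\beta \mapsto \mu_{\alpha + \beta(1-\alpha)}$ (which runs from $\mu_\alpha$ to $\mu_1$) is induced by $(x_1,x_2,x_3) \mapsto ((1-\alpha)x_1 + \alpha x_2,\, x_2,\, x_3) \# \bnu$. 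From Definition \ref{transport distance def} one computes directly that the transport distances between the endpoints of these two curves are $\alpha^2 W_{2,\bnu}^2(\mu_0,\mu_1)$ and $(1-\alpha)^2 W_{2,\bnu}^2(\mu_0,\mu_1)$, respectively.

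Granting this, I would fix any generalized geodesic $\mu_\alpha$ from $\mu_0$ to $\mu_1$ with base $\nu$. If $\mu_0 \notin D(E)$ or $\mu_1 \notin D(E)$ the $\omega$-convexity inequality is trivial, so assume $\mu_0,\mu_1 \in D(E)$; then $\phi(\alpha) = E(\mu_\alpha)$ is finite and differentiable on $[0,1]$ by hypothesis. Write $D := W_{2,\bnu}(\mu_0,\mu_1)$ and fix $\alpha \in (0,1)$; since $\mu_0, \mu_\alpha, \mu_1 \in D(E)$, the hypothesis is applicable to both sub-geodesics from the first paragraph. Applying it to the sub-geodesic from $\mu_\alpha$ to $\mu_0$, whose $E$-derivative at the initial point is $\frac{d}{d\beta} E(\mu_{(1-\beta)\alpha})\big|_{\beta=0} = -\alpha \phi'(\alpha)$ by the chain rule, gives
\[ \phi(0) - \phi(\alpha) + \alpha \phi'(\alpha) \;\geq\; \frac{\lambda_\omega}{2}\, \omega(\alpha^2 D^2) , \]
which I will call (A). Applying it to the sub-geodesic from $\mu_\alpha$ to $\mu_1$, whose $E$-derivative at the initial point is $(1-\alpha)\phi'(\alpha)$, gives
\[ \phi(1) - \phi(\alpha) - (1-\alpha) \phi'(\alpha) \;\geq\; \frac{\lambda_\omega}{2}\, \omega((1-\alpha)^2 D^2) , \]
which I will call (B).

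Taking $(1-\alpha)$ times (A) plus $\alpha$ times (B), the terms containing $\phi'(\alpha)$ cancel and one is left with
\[ (1-\alpha) E(\mu_0) + \alpha E(\mu_1) - E(\mu_\alpha) \;\geq\; \frac{\lambda_\omega}{2}\Big[ (1-\alpha)\, \omega(\alpha^2 D^2) + \alpha\, \omega((1-\alpha)^2 D^2) \Big] , \]
which is exactly Definition \ref{convexitydef} (\ref{omega convex def}) with $d = W_{2,\bnu}$; the cases $\alpha \in \{0,1\}$ hold as equalities since $\omega(0)=0$. Thus $E$ is $\omega$-convex along generalized geodesics. For the final assertion, when the hypotheses are assumed only in the cases $\nu = \mu_0$ and $\nu = \mu_1$, I would run the identical argument along an actual geodesic $\mu_\alpha$ from $\mu_0$ to $\mu_1$, taking $\mu_0$ as the base of the sub-geodesic from $\mu_\alpha$ to $\mu_0$ and $\mu_1$ as the base of the one from $\mu_\alpha$ to $\mu_1$ (in each case an endpoint, so the restricted hypothesis applies), and noting that $W_{2,\bnu}$ reduces to $W_2$ whenever the base is one of the endpoints, so that the two transport distances become $\alpha W_2(\mu_0,\mu_1)$ and $(1-\alpha) W_2(\mu_0,\mu_1)$.

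The main obstacle is justifying the structural claim of the first paragraph, specifically that the pivoted plan $((1-\alpha)x_1 + \alpha x_2,\, x_3) \# \bnu$ is an \emph{optimal} plan between $\mu_\alpha$ and $\nu$, so that the two curves really are generalized geodesics in the sense of Definition \ref{gengeo}. When $\nu$ does not charge sets of $d-1$ dimensional Hausdorff measure this reduces to the fact that a convex combination of gradients of convex functions is again the gradient of a convex function; the general case follows from convex-conjugate duality for the quadratic transport cost and is implicit in \cite[Lemma 9.2.1]{AGS} and \cite{Craig}. Everything else is routine bookkeeping with the chain rule; the $L^1$ integrability assumption on $\frac{d}{d\alpha} E(\mu_\alpha)$ enters only to confirm that the restricted geodesics inherit the differentiability and integrability hypotheses, so that the ``above the tangent line'' inequality is legitimately available along them.
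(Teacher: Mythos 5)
Your proposal is correct and follows essentially the same route as the paper: restrict the generalized geodesic to the two sub-curves $\beta \mapsto \mu_{(1-\beta)\alpha}$ and $\beta \mapsto \mu_{(1-\beta)\alpha+\beta}$ (the paper's $\tilde{\mu}_\beta$ and $\hat{\mu}_\beta$, obtained by pivoting $\bnu$ exactly as you do), apply the above-the-tangent-line hypothesis at $\mu_\alpha$ toward each endpoint, and take the $(1-\alpha)$/$\alpha$ convex combination so the derivative terms cancel. The only difference is cosmetic: you explicitly flag (and sketch, via cyclical monotonicity/Brenier) the optimality of the pivoted plan $(\pi_\alpha,\pi_3)\#\bnu$ between $\mu_\alpha$ and $\nu$, a point the paper asserts without comment, and you spell out the geodesic case with base equal to an endpoint, which the paper leaves to the reader.
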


\begin{proof}
Define $\pi_\alpha := (1-\alpha) \pi_1 + \alpha \pi_2$. Fix $\mu_0,\mu_1,\nu \in \P_2(\Rd)$, and let $\mu_\alpha = \pi_\alpha \# \bnu$, where $\pi^{1,2} \# \bnu \in \Gamma_0(\mu_0,\nu)$ and $\pi^{1,3} \# \bnu \in \Gamma_0(\mu_1,\nu)$. Next, define $\bnu_\alpha: = (\pi_\alpha \times \pi_2 \times \pi_3)\# \bnu$ so that $\pi^{1,2} \# \bnu_\alpha \in \Gamma_0(\mu_\alpha, \nu)$ and $\pi^{1,3} \# \bnu_\alpha \in \Gamma_0(\mu_1,\nu)$ and, for any $\beta \in [0,1]$, $\hat{\mu}_\beta:= \pi_{\beta} \# \bnu_\alpha = \mu_{(1-\beta)\alpha + \beta}$ is a generalized geodesic from $\mu_\alpha$ to $\mu_1$ with base $\nu$. By assumption,
\begin{align} \label{suff1}
E(\mu_1) - E(\mu_\alpha) - \left. \frac{d}{d \beta} E(\hat{\mu}_\beta) \right|_{\beta = 0} \geq \frac{\lambda_\omega}{2} \omega \left((1-\alpha)^2 W_{2,\bnu}^2(\mu_0,\mu_1) \right).
\end{align}
Likewise, if we take $\tilde{\bnu}_\alpha := ( \pi_\alpha \times \pi_1 \times \pi_3 ) \# \bnu$, $\tilde{\mu}_\beta := \pi_\beta \# \tilde{\bnu}_\alpha =\mu_{(1-\beta)\alpha}$ is a generalized geodesic from $\mu_\alpha$ to $\mu_0$, and
\begin{align} \label{suff2}
E(\mu_0) - E(\mu_\alpha) - \left. \frac{d}{d \beta} E(\tilde{\mu}_\beta) \right|_{\beta = 0} \geq \frac{\lambda_\omega}{2} \omega \left(\alpha^2 W_{2,\bnu}^2(\mu_0,\mu_1) \right).
\end{align}
Since 
\[ \left. \frac{d}{d \beta} E(\hat{\mu}_\beta) \right|_{\beta = 0} = (1-\alpha) \left.\frac{d}{d \beta} E(\mu_\beta) \right|_{\beta = \alpha} \quad \text{ and } \quad  \left. \frac{d}{d \beta} E(\tilde{\mu}_\beta) \right|_{\beta = 0 } = - \alpha \left. \frac{d}{d \beta} E(\mu_\beta) \right|_{\beta = \alpha} , \]
multiplying  (\ref{suff1}) by $\alpha$,  (\ref{suff2}) by $(1-\alpha)$, and adding them together gives the result,
\begin{align*}
E(\mu_\alpha) &\leq (1-\alpha) E(\mu_0) + \alpha E(\mu_1) - \frac{\lambda_\omega}{2} \left[ (1-\alpha) \omega(\alpha^2 W^2_{2, \bnu}(\mu_0,\mu_1)) + \alpha \omega( (1-\alpha)^2 W^2_{2, \bnu}(\mu_0,\mu_1)) \right].
\end{align*}
\end{proof}

\subsection{Gradient flow and discrete gradient flow of $\omega$-convex energies}

We now define the Wasserstein \emph{gradient flow} and \emph{discrete gradient flow} of an $\omega$-convex energy.
We require the following regularity of our gradient flow in time:
\begin{definition}[locally absolutely continuous curve] \label{abs cts def}
$\mu(t): \R \to \P_{2}(\Rd)$ is \emph{locally absolutely continuous} if for all bounded $I \subseteq \R$, there exists $m \in L^1(I)$ so that $W_2(\mu(t),\mu(s)) \leq \int_s^t m(r) dr$, $\forall s \leq t \in I$.
\end{definition}

We now define what it means for such a curve $\mu(t)$ to be the gradient flow of an energy $E$ with respect to the Wasserstein metric structure.
\begin{definition}[gradient flow] \label{gradflowdef} Suppose $E: \P_{2}(\Rd) \to \R \cup \{ +\infty \}$ is proper, lower semicontinuous, and $\omega$-convex along generalized geodesics.
A locally absolutely continuous  curve $\mu: (0,+\infty) \to \P_{2}(\Rd)$ is a\emph{ gradient flow} of $E$ with initial data $\mu_0 \in D(E)$ if $\mu(t) \xrightarrow{t \to 0} \mu_0$ and
\begin{align} \label{continuous evi}
\frac{1}{2} \frac{d}{dt} W_2^2(\mu(t), \nu) + \frac{\lambda_\omega}{2} \omega( W_2^2( \mu(t), \nu)) \leq E(\nu) - E( \mu(t))  ,  \quad \forall \nu \in D(E) , \text{ a.e. }  t >0 .
\end{align}
\end{definition}

 This definition generalizes Ambrosio, Gigli, and Savar\'e's \emph{evolution variational inequality} characterization of the gradient flow of semiconvex energies \cite[Theorem 4.0.4 (iii)]{AGS}, and it parallels Carrillo, Lisini, and Mainini's notion of gradient flow, which they defined in the specific case that $\omega(x) = x|\log x|$, for $x$ small, and $\omega(x) = O(x)$, for $x$ large \cite[Theorem 3.1 (i)]{CarrilloLisiniMainini}. 

In order to prove that solutions exist, we consider a time discretization of the gradient flow, analogous to the implicit Euler method approximation of gradient flow in Euclidean space. This time discretization was first used in the Wasserstein context by Jordan, Kinderlehrer, and Otto, and consequently is often known as the \emph{JKO scheme} \cite{JKO}.

On one hand, one can show that this scheme converges to a generalized notion of the gradient flow without imposing any convexity assumptions on the energy \cite[Theorem 2.3.1 and 2.3.3]{AGS}. On the other hand, \emph{quantitative} rates of convergence of the discrete gradient flow scheme to the gradient flow have only been obtained for semiconvex energies \cite[Theorem 4.0.4]{AGS}. Since such estimates are often essential in applications of the Wasserstein gradient flow theory in partial differential equations (c.f. \cite{AlexanderKimYao, CraigKimYao}), a main goal of this paper is to quantify the rate of convergence of the discrete gradient flow scheme in a more general setting.

With this motivation in mind, we now turn to the precise definition of the Wasserstein \emph{discrete gradient flow} and  some of its basic properties.
\begin{definition}[discrete gradient flow] \label{discrete gradient flow def}
Given an energy $E:\P_2(\Rd) \to \R \cup \{ +\infty\}$ and $\tau>0$, 
\begin{align*}
  &\text{The \emph{proximal map} $J_\tau$ is given by } J_0 \mu := \mu \text{ and }  J_\tau \mu := \argmin_{\nu \in \P_{2}(\Rd)} \left\{ \frac{1}{2\tau} W_2^2(\mu,\nu) + E(\nu) \right\} \text{ for } \tau >0 . \\
&\text{In addition, denote }   J^n_\tau \mu :=  \underbrace{J_\tau\circ J_\tau \circ \dots \circ J_\tau}_{\text{$n$ times}} \mu .
   \end{align*}
We will refer to any sequence $\mu^n_\tau \in J^n_\tau \mu$ as a \emph{discrete gradient flow sequence} with \emph{initial data} $\mu$ and \emph{time step} $\tau$.
\end{definition}
\noindent In general, the discrete gradient flow sequence is not unique, and we use $\mu^n_\tau$ to denote any such sequence.

In what follows, we consider energy functionals satisfying the following assumption, which ensures a discrete gradient flow sequence exists for all $\tau>0$ sufficiently small:
\begin{assumption} \label{prox nonempty}
Suppose $E$ is proper, lower semicontinuous, and there exists $\tau_*\in(0, +\infty]$ so that for all $\mu \in \P_{2}(\Rd)$ and $0 \leq \tau <\tau_*$, $J_\tau \mu \neq \emptyset$.
\end{assumption}


We conclude this section by recalling fundamental properties of the proximal map and discrete gradient flow.
By definition, for any $\mu \in D(E)$, 
\begin{align} \label{onestep1} W_2^2(\mu,\mu_\tau) \leq 2 \tau (E(\mu) - E(\mu_\tau)) , \ \forall \mu_\tau \in J_\tau \mu .
\end{align}
Furthermore, by \cite[Lemma 3.1.2]{AGS}, for any $\mu \in D(E)$,
\begin{align} \label{basic continuity estimates}
\lim_{\tau \to 0} \sup_{\mu_\tau \in J_\tau \mu} W_2(\mu_\tau, \mu) =0  \text{ and } \lim_{\tau \to 0} \inf_{ \mu_\tau \in J_\tau \mu} E(\mu_\tau) = E(\mu) .
\end{align}
Again, by definition, the energy is nonincreasing along any discrete gradient flow,
\[ E(\mu^n_\tau) \leq E(\mu^{n-1}_\tau) \leq \dots \leq E(\mu) , \]
and, by \cite[Lemma 3.2.2]{AGS}, for any $\mu \in D(E)$, 
there exists a nondecreasing function $C(\mu,\cdot): [0,+\infty) \to [0, +\infty)$ so that for $0\leq  \tau< \tau_*$,
\begin{align} \label{tau mu def}
 E(\mu) - E(\mu^n_\tau) \leq  C(\mu, n \tau) .
\end{align}

By combining the above inequalities, we conclude that for all $0 \leq \tau < \tau_*$,
\begin{align} \label{tau mu cons}
W_2(\mu^{j-1}_\tau,\mu^{j}_\tau) &\leq \sqrt{2 \tau C(\mu,n\tau)} \text{ for all } 1 \leq j \leq n , \\
 W_2(\mu,\mu^n_\tau) &\leq \sum_{j=1}^n W_2(\mu^{j-1},\mu^{j}) \leq \sqrt{n} \left(\sum_{j=1}^n 2 \tau(E(\mu^{j-1}_\tau) - E(\mu^j_\tau)) \right)^{1/2} \leq \sqrt{2 n \tau C(\mu, n \tau)}. \label{base case}
\end{align}

Finally, we will also use the following ``above the tangent line'' inequality for the proximal map \cite[Lemma 10.1.2]{AGS}. Suppose $\mu, \nu \in P_{2}(\Rd)$ and there exists an optimal transport map $\bt_{\mu_\tau}^\mu \# \mu_\tau = \mu$. Then for any other transport map $\bt$ satisfying $\bt \# \mu_\tau = \nu$,
\begin{align}  E(\nu)-E(\mu_\tau) \geq \frac{1}{\tau} \int \la \bt_{\mu_\tau}^{\mu} - \id, \bt - \id \ra d \mu_\tau - \frac{1}{2 \tau} \|\bt - \id \|_{L^2(\mu_\tau)}^2    \label{AGSELeqn} .
\end{align}
(See \cite[Lemma 10.3.4]{AGS} for the general case, in which $\bt_{\mu_\tau}^\mu$ and $\bt$ are replaced by transport plans.)

We close this section with a lemma relating the proximal map $J_\tau$ with large and small time steps. This improves on the analogous result from the author's previous work \cite[Theorem 2.3]{Craig}, as it does not require convexity of the energy. For simplicity of notation, we assume there exist optimal transport maps between all measures. The proof in the general case is analogous.

\begin{lemma}[proximal map with large vs. small time steps] \label{proxmapdifftimelem}
Suppose $E$ satisfies assumption \ref{prox nonempty} and there exist optimal transport maps between all measures. Then for all $\mu \in \P_2(\Rd)$, ${0 \leq h \leq \tau < \tau_*}$, and $\mu_\tau \in J_\tau \mu$,
\[ \mu_\tau \in J_h \left[ \left( \frac{\tau-h}{\tau}
\mathbf{t}_\mu^{\mu_\tau} + \frac{h}{\tau} \id \right) \# \mu \right] . \]
\end{lemma}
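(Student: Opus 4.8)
The plan is to prove the containment directly from the definition of the proximal map, showing that $\mu_\tau$ minimizes $\nu \mapsto \frac{1}{2h}W_2^2(\mu_h,\nu) + E(\nu)$, where $\mu_h := \left(\frac{\tau-h}{\tau}\bt_\mu^{\mu_\tau} + \frac{h}{\tau}\id\right)\#\mu$. I would first dispose of the endpoints: if $h=0$ then $\mu_h = \mu_\tau$ and the claim is $\mu_\tau \in J_0\mu_\tau = \{\mu_\tau\}$, while if $h=\tau$ then $\mu_h = \mu$ and the claim is the hypothesis $\mu_\tau \in J_\tau\mu$; so assume $0<h<\tau$. The key structural observation is that $\mu_h$ is a point on the geodesic $\alpha \mapsto \left((1-\alpha)\id + \alpha\,\bt_\mu^{\mu_\tau}\right)\#\mu$ from $\mu$ to $\mu_\tau$, namely the point at parameter $\alpha = (\tau-h)/\tau$. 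Since this curve is a constant-speed geodesic (it has the form guaranteed by \cite[Theorem 7.2.2]{AGS}, via the plan $(\id\times\bt_\mu^{\mu_\tau})\#\mu \in \Gamma_0(\mu,\mu_\tau)$), I record the two identities $W_2(\mu,\mu_h) = \frac{\tau-h}{\tau}W_2(\mu,\mu_\tau)$ and $W_2(\mu_h,\mu_\tau) = \frac{h}{\tau}W_2(\mu,\mu_\tau)$.

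Next I would carry out the reduction to a purely metric statement. Fix $\nu \in \P_2(\Rd)$, which we may assume lies in $D(E)$. The minimality of $\mu_\tau$ in $J_\tau\mu$ reads $E(\mu_\tau) + \frac{1}{2\tau}W_2^2(\mu,\mu_\tau) \le E(\nu) + \frac{1}{2\tau}W_2^2(\mu,\nu)$. Subtracting this from the target inequality $E(\mu_\tau) + \frac{1}{2h}W_2^2(\mu_h,\mu_\tau) \le E(\nu) + \frac{1}{2h}W_2^2(\mu_h,\nu)$, the energy terms cancel, and using the two geodesic identities one computes
\[ \frac{1}{2h}W_2^2(\mu_h,\mu_\tau) - \frac{1}{2\tau}W_2^2(\mu,\mu_\tau) = \left(\frac{h}{2\tau^2} - \frac{1}{2\tau}\right)W_2^2(\mu,\mu_\tau) = -\frac{1}{2(\tau-h)}W_2^2(\mu,\mu_h). \]
Hence it suffices to establish the metric inequality
\[ \frac{1}{2\tau}W_2^2(\mu,\nu) \le \frac{1}{2h}W_2^2(\mu_h,\nu) + \frac{1}{2(\tau-h)}W_2^2(\mu,\mu_h). \]

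Finally I would prove this metric inequality, which is the only nontrivial ingredient, though it is elementary. By the triangle inequality, $W_2(\mu_h,\nu) \ge \left|W_2(\mu,\nu) - W_2(\mu,\mu_h)\right|$, so setting $u := W_2(\mu,\nu) - W_2(\mu,\mu_h)$ and $v := W_2(\mu,\mu_h)$ gives $W_2^2(\mu_h,\nu) \ge u^2$, whence
\[ \frac{1}{2h}W_2^2(\mu_h,\nu) + \frac{1}{2(\tau-h)}W_2^2(\mu,\mu_h) \ge \frac{u^2}{2h} + \frac{v^2}{2(\tau-h)} \ge \frac{(u+v)^2}{2\tau} = \frac{W_2^2(\mu,\nu)}{2\tau}, \]
the middle step being the weighted Cauchy--Schwarz inequality $\frac{u^2}{p} + \frac{v^2}{q} \ge \frac{(u+v)^2}{p+q}$ for $p,q>0$ (valid for all real $u,v$, the difference being $\frac{(qu-pv)^2}{pq(p+q)}$). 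I expect the main obstacle to be recognizing that the assertion collapses to this metric inequality: in contrast to \cite[Theorem 2.3]{Craig}, no first-variation or subdifferential machinery such as \eqref{AGSELeqn} is needed, and this is exactly why no convexity of $E$ is required. The general case, in which $\bt_\mu^{\mu_\tau}$ is replaced by an optimal plan $\gamma \in \Gamma_0(\mu,\mu_\tau)$ and $\mu_h = \left((1-\tfrac h\tau)\pi_2 + \tfrac h\tau\pi_1\right)\#\gamma$, proceeds identically, since such interpolants still lie on a geodesic between $\mu$ and $\mu_\tau$.
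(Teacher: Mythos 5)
Your proof is correct, but it is a genuinely different argument from the one in the paper. The paper proves the lemma variationally: it sets $\bxi = \frac{1}{\tau}(\bt_{\mu_\tau}^\mu - \id)$, uses cyclical monotonicity and uniqueness of optimal maps to identify $\bt_{\mu_\tau}^\nu = \id + h\bxi$ for the interpolant $\nu$, applies the Euler--Lagrange ``above the tangent line'' inequality (\ref{AGSELeqn}) with the composed map $\bt_\nu^{\nu_h}\circ\bt_{\mu_\tau}^\nu$, and after a Hilbert-space identity concludes $W_{2,\nu}(\nu_h,\mu_\tau)=0$, i.e. \emph{any} element $\nu_h\in J_h\nu$ coincides with $\mu_\tau$ (so it actually yields $J_h\nu=\{\mu_\tau\}$, presupposing nonemptiness of $J_h\nu$ from Assumption \ref{prox nonempty}). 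You instead verify membership directly: using that the interpolant sits at parameter $(\tau-h)/\tau$ on the geodesic from $\mu$ to $\mu_\tau$, the minimality of $\mu_\tau$ at step $\tau$ reduces the claim to the three-point inequality $\frac{1}{2\tau}W_2^2(\mu,\nu)\le \frac{1}{2h}W_2^2(\mu_h,\nu)+\frac{1}{2(\tau-h)}W_2^2(\mu,\mu_h)$, which holds in any metric space by the triangle inequality and weighted Cauchy--Schwarz (and you correctly note that the weighted Cauchy--Schwarz step is valid even when $u<0$). Your route buys elementarity and robustness: it needs no first-variation inequality, no uniqueness or cyclical monotonicity of optimal maps (maps enter only in writing the interpolant, and optimal plans work verbatim), and no existence of minimizers at step $h$, since you exhibit one; your algebra checks out, including the endpoint cases $h=0$ and $h=\tau$ and the reduction identity $\frac{1}{2h}W_2^2(\mu_h,\mu_\tau)-\frac{1}{2\tau}W_2^2(\mu,\mu_\tau)=-\frac{1}{2(\tau-h)}W_2^2(\mu,\mu_h)$. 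What the paper's argument buys in exchange is the marginally stronger conclusion that $\mu_\tau$ is the \emph{unique} element of $J_h\nu$, though only membership is used later (in the proof of Theorem \ref{recineqthm}), so your proof fully suffices for the role the lemma plays in the paper.
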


\begin{proof}
The result is trivially true for $h =\tau$, so suppose $h<\tau$.
Let $\bxi = \frac{1}{\tau}(\bt_{\mu_\tau}^\mu - \id)$. Since $h/\tau < 1,$
\[ (\id + h \bxi) = \left(\id + \frac{h}{\tau} (\bt_{\mu_\tau}^\mu - \id) \right)  = \left( \frac{\tau-h}{\tau} \id + \frac{h}{\tau} \bt_{\mu_\tau}^\mu \right) \]
is cyclically monotone. Thus, if we define $\nu = (\id + h \bxi) \# \mu_\tau$, by uniqueness of the optimal transport map, $\bt_{\mu_\tau}^\nu = \id + h \bxi$. Furthermore,  $\nu = \left( \frac{\tau-h}{\tau} \bt_{\mu}^{\mu_\tau}+ \frac{h}{\tau}\id \right) \# \mu$, and
it suffices to show $\mu_\tau = \nu_h$ for $\nu_h \in J_h \nu$.

Rearranging, 
\begin{align} \label{proxmapeq1}
\frac{1}{h} (\bt_{\mu_\tau}^\nu - \id) = \bxi = \frac{1}{\tau} (\bt_{\mu_\tau}^\mu - \id) . 
\end{align}
Then, by inequality (\ref{AGSELeqn}), with $\bt = \bt_\nu^{\nu_h} \circ \bt_{\mu_\tau}^\nu$,
\[ E(\nu_h) - E(\mu_\tau) \geq \frac{1}{\tau} \int \la \bt_{\mu_\tau}^\mu - \id , \bt_\nu^{\nu_h} \circ \bt_{\mu_\tau}^\nu - \id \ra d \mu_\tau - \frac{1}{2\tau} \|\bt_\nu^{\nu_h} \circ \bt_{\mu_\tau}^\nu - \id \|_{L^2(\mu_\tau)} . \]
Applying (\ref{proxmapeq1}) and rearranging,
\[ E(\nu_h) - E(\mu_\tau) \geq \frac{1}{h} \int \la \id - \bt_\nu^{\mu_\tau} , \bt_\nu^{\nu_h} - \bt_\nu^{\mu_\tau} \ra d \nu - \frac{1}{2 \tau} W_{2, \nu}^2(\nu_h, \mu_\tau) . \]
Adding $\frac{1}{2h}W_2^2(\nu,\nu_h)$ to both sides and using that $E(\nu_h) + \frac{1}{2h}W_2^2(\nu,\nu_h) \leq E(\mu_\tau) + \frac{1}{2h} W_2^2(\nu,\mu_\tau)$,
\[ \frac{1}{2h}W_2^2(\nu,\mu_\tau) \geq \frac{1}{2h} W_2^2(\nu,\nu_h) + \frac{1}{h} \int \la \id - \bt_\nu^{\mu_\tau} , \bt_\nu^{\nu_h} - \bt_\nu^{\mu_\tau} \ra d \nu - \frac{1}{2 \tau} W_{2, \nu}^2(\nu_h, \mu_\tau) . \]
By the identity $\la a, a-b \ra = |a|^2/2 + |a-b|^2/2 - |b|^2/2$, with $a = \id - \bt_\nu^{\mu_\tau}$ and $b = \id - \bt_\nu^{\nu_h}$,
\[ 0 \geq \frac{1}{2h} W_{2,\nu}^2(\nu_h, \mu_\tau) - \frac{1}{2\tau}W_{2,\nu}^2(\nu_h,\mu_\tau) . \]
Since $h < \tau$, this implies $0 \geq W_{2,\nu}^2(\nu_h,\mu_\tau)  \geq W_2^2(\nu_h, \mu_\tau)$, i.e. $\mu_\tau = \nu_h$.
\end{proof}

\subsection{Main assumptions and examples} \label{main assumptions and examples}
We now turn to the main assumptions we will place on our energy functional $E$.
In order to ensure the notion gradient flow given in Definition \ref{gradflowdef} is well-posed and to quantify the rate of convergence of the discrete gradient flow, we require that the modulus of convexity $\omega(x)$ satisfy Osgood's criterion and decay sufficiently quickly as $x \to 0$. As motivation for these assumptions, we recall Osgood's criterion for uniqueness of ordinary differential equations in Euclidean space.
\begin{definition}[Osgood modulus of continuity] \label{Osgoodcondition} Given $U \subseteq_{\text{open}} \Rd$,
a function $\omega: U \to \Rd$ has an \emph{Osgood modulus of continuity} if $|\omega(x) - \omega(y)| \leq \tilde{\omega}(|x-y|)$ for all $x,y \in U$, where $\tomega: [0,+\infty) \to [0, +\infty)$  is continuous, nondecreasing, vanishes only at $x=0$, and $\int_0^1 \frac{dx}{\tomega(x)} = +\infty$.
\end{definition}
\begin{theorem}[Osgood's criterion for uniqueness of ODEs, {c.f. \cite[Corollary II.6.2]{Hartman}}] Given $U \subseteq_{\text{open}} \Rd$, suppose $\omega: U \to \Rd$ has an Osgood modulus of continuity. Then for any $x \in U$, there exists $\delta >0$ so that the ordinary differential equation $\dot{x}(t) = \omega(x(t))$, $x(0) = x_0,$ has a unique solution for all $-\delta \leq t \leq \delta$.
\end{theorem}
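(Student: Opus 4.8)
The plan is to establish existence and uniqueness separately, since they rest on entirely different mechanisms, and only the uniqueness half uses the Osgood structure. For existence, I would first localize: pick $r>0$ with $\overline{B_r(x_0)}\subseteq U$, so that $\omega$, being continuous, is bounded on this compact set, say $|\omega|\le M$ there. Peano's existence theorem then produces a $C^1$ solution of $\dot x=\omega(x(t))$, $x(0)=x_0$, on $[-\delta,\delta]$ with $\delta=r/M$, the a priori bound $|\dot x|\le M$ keeping the trajectory inside $\overline{B_r(x_0)}$ (the degenerate case $M=0$ is trivial). No use of $\tomega$ is needed here — only continuity of $\omega$.

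For uniqueness, suppose $x(\cdot)$ and $y(\cdot)$ both solve the initial value problem on $[-\delta,\delta]$. It suffices to treat $t\in[0,\delta]$, since $t\mapsto -t$ turns the equation into $\dot x=-\omega(x(t))$, which still has an Osgood modulus of continuity, covering $t\in[-\delta,0]$. The key device is to work with $g(t):=|x(t)-y(t)|^2$ rather than $|x(t)-y(t)|$: although the latter need not be differentiable at its zeros, $g$ is $C^1$, with $g(0)=0$, $g\ge 0$, and, invoking the Osgood bound on $\omega$,
\[ g'(t)=2\la x(t)-y(t),\,\omega(x(t))-\omega(y(t))\ra\le 2|x(t)-y(t)|\,\tomega(|x(t)-y(t)|)=2\sqrt{g(t)}\,\tomega\bigl(\sqrt{g(t)}\bigr)=:\Omega(g(t)). \]

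The heart of the matter is then an Osgood-type comparison lemma: a nonnegative $C^1$ function $g$ on $[0,\delta]$ with $g(0)=0$ and $g'\le\Omega(g)$ must vanish identically, provided $\int_0^1 dr/\Omega(r)=+\infty$. I would argue by contradiction: if $g(t_1)>0$ for some $t_1$, set $t_0:=\sup\{t\in[0,t_1]:g(t)=0\}$, so that $g(t_0)=0$ by continuity and $g>0$ on $(t_0,t_1]$. On $(t_0,t_1]$ one may divide by $\Omega(g)>0$ and integrate from $t_0+\epsilon$ to $t_1$, getting $\int_{g(t_0+\epsilon)}^{g(t_1)}dr/\Omega(r)\le t_1-t_0-\epsilon\le\delta$. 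Letting $\epsilon\to 0^+$ sends $g(t_0+\epsilon)\to 0^+$, so the left side tends to $\int_0^{g(t_1)}dr/\Omega(r)$, which is $+\infty$: the change of variables $r=v^2$ turns it into $\int_0^{\sqrt{g(t_1)}}dv/\tomega(v)$, and this diverges because $\int_0^1 dv/\tomega(v)=+\infty$ while $\tomega$ is bounded below on any $[\sqrt{g(t_1)},1]$ (and a fortiori the integral diverges if $\sqrt{g(t_1)}\ge 1$). This contradicts the finite bound $\delta$, forcing $g\equiv 0$, i.e.\ $x\equiv y$.

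The main obstacle I anticipate is exactly the non-differentiability of $t\mapsto|x(t)-y(t)|$ at its zeros, which the square trick $g=|x-y|^2$ circumvents cleanly — at the cost of having to verify that replacing $\tomega$ by $\Omega(r)=2\sqrt r\,\tomega(\sqrt r)$ preserves the divergence of the relevant integral, which the substitution above confirms. A secondary subtlety is that $g$ can return to zero and then become positive again; taking $t_0$ to be the \emph{last} zero before $t_1$, rather than working on all of $[0,\delta]$, is precisely what legitimizes the division by $\Omega(g)$. All of this is classical and constitutes the content of \cite[Corollary II.6.2]{Hartman}; the sketch above records the route.
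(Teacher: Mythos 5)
Your proposal is correct, and it is worth noting at the outset that the paper does not prove this theorem at all: it is quoted as a classical fact with a citation to Hartman, so there is no internal proof to compare against, and what you have written is a correct, self-contained version of the standard argument. Your route — Peano for existence after localizing to a closed ball in $U$ on which the continuous $\omega$ is bounded, then uniqueness via the squared distance $g=|x-y|^2$, the differential inequality $g'\le 2\sqrt{g}\,\tomega(\sqrt{g})=\Omega(g)$, and the contradiction obtained by integrating $g'/\Omega(g)\le 1$ from the last zero $t_0$ before $t_1$ and letting the lower limit tend to $0^+$, where the substitution $r=v^2$ shows $\int_0^{g(t_1)}dr/\Omega(r)$ inherits its divergence from $\int_0^1 dv/\tomega(v)=+\infty$ — is the classical Osgood proof. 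The cited Corollary II.6.2 in Hartman is organized a bit differently: it is deduced from a general uniqueness theorem by comparing $|x(t)-y(t)|$ with the maximal solution of the scalar problem $u'=\tomega(u)$, $u(0)=0$, which vanishes identically precisely because of the divergent integral; the two mechanisms are equivalent, and your squared-distance device cleanly sidesteps the nondifferentiability of $t\mapsto|x(t)-y(t)|$ at its zeros, which is the only genuine pitfall. The remaining points you flag — time reversal for $t<0$ (with $-\omega$ having the same modulus), the choice of $t_0$ as the last zero so that division by $\Omega(g)>0$ is legitimate, and the limit $g(t_0+\epsilon)\to 0^+$ — are all handled correctly, so the sketch is complete as a proof outline.
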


By replacing $\tomega(x)$ with its concave majorant, we may assume that $\tilde{\omega}(x)$ in the definition of Osgood modulus of continuity is concave (c.f. \cite[Lemma 6.1]{DeVoreLorentz}). Furthermore, since  any concave and nonnegative function is subadditive, we have
\begin{align} \label{tomega own modulus} |\tomega(x) - \tomega(y) | \leq \tomega(|x-y|) ,
\end{align}
i.e. $\tomega(x)$ is its own Osgood modulus of continuity.

Inspired by this criterion for uniqueness of ordinary differential equations in Euclidean space, we now introduce the notion of an Osgood modulus of \emph{convexity}.
\begin{definition}[Osgood modulus of convexity] \label{Osgood modulus convexity}
Suppose $\omega:[0, +\infty) \to [0,+\infty)$ is continuous, nondecreasing, and vanishes only at $x=0$. We say $\omega$ is an \emph{Osgood modulus of convexity} if  $\omega$ has an Osgood modulus of continuity in the sense of Definition \ref{Osgoodcondition}.
\end{definition}

In what follows, we will assume that our energies are $\omega$-convex, where $\omega(x)$ is an Osgood modulus of convexity and $\tilde{\omega}(x) = o(\sqrt{x})$ as $x \to 0$. The former ensures that the associated ordinary differential equations
\begin{align} \label{omega ode}
\begin{cases}
\frac{d}{dt} F_t(x) &= \lambda_\omega \omega(F_t(x)) , \\
 \ \  \ F_0(x)&= x .
\end{cases}
\quad \quad \quad \quad
\begin{cases}
\frac{d}{dt} \tF_t(x) &= -\lambda_\omega^- \tomega(F_t(x)) , \\
 \ \  \ F_0(x)&= x .
\end{cases}
\end{align}
are well-posed locally in time. The latter is a sufficient condition for the HWI inequality (Proposition \ref{HWIproposition}), since $\omega(x) \leq \tomega(x)$. It is also an essential element in our proof that the discrete gradient flow is a Cauchy sequence (Theorem \ref{W2RasBound}), which we use to quantify its convergence to a solution of the gradient flow (Theorem \ref{expform}).

\begin{remark} \label{decay remark}
Note that if  $\omega(x)$ is an Osgood modulus of convexity and $\lim_{x \to 0} \tomega(x)/\sqrt{x}$ exists, then we must have $\tomega(x) = o(\sqrt{x})$. Consequently, this additional decay assumption is only needed to prevent oscillations of $\tomega(x)/\sqrt{x}$ as $x \to 0$.
\end{remark}

\begin{example}[moduli of convexity] \label{omega examples}
We list three examples of functions $\omega(x)$ that are Osgood moduli of convexity and for which $\tilde{\omega}(x) = o(\sqrt{x})$ as $x \to 0$. We also list the function $F_t(x)$ that solves the associated ordinary differential equation (\ref{omega ode}).
\begin{enumerate}[(i)]
\item Lipschitz modulus of convexity:  $\omega(x) = \tomega(x) = x$. \\
For all $x\geq 0$ and $t \geq 0$, $F_t(x) = e^{-\lambda_\omega t}x$.\label{Lipschitz omega}
 \item polynomial modulus of convexity: for $p\geq 0$, 
 \[ \omega(x) = \begin{cases} x^{p+1} &\text{ for }0 \leq x \leq 1, \\ 1 &\text{ for } x >1, \end{cases} \quad \text{ and } \quad \tomega(x) = x . \]
 \label{polynomial omega} 
For $0 \leq x \leq 1$ and $0 \leq t < (x^{-p}-1)/ \lambda_\omega^+ p$, $F_t(x) = x (1 - \lambda_\omega p t x^{p})^{-1/p}$. 
\item log-Lipschitz modulus of convexity: \label{log Lipschitz omega} \begin{align*}
 \omega(x) = \tomega(x) = \begin{cases} x |\log x| & \text{ if } 0 \leq x \leq e^{-1-\sqrt{2}} , \\ \sqrt{x^2+ 2(1+\sqrt{2})e^{-1-\sqrt{2}}x} & \text{ if }x > e^{-1-\sqrt{2}}.  \end{cases} \end{align*}
 For $0 \leq x \leq e^{-1-\sqrt{2}}$ and $t< \log \left( \log(x)/(-1-\sqrt{2}) \right)/\lambda_\omega^+$,  $F_t(x) = x^{e^{-\lambda_\omega t}}$. 
\end{enumerate}
\end{example}

For the remainder of this work, we consider energy functionals for which the discrete gradient flow is well-defined and that are $\omega$-convex, with $\omega(x)$ satisfying the above Osgood criterion and decay as $x \to 0$. Specifically, we impose the following assumption:
\begin{assumption}[energy functional] \label{main assumptions}
Suppose $E$ is proper, lower semicontinuous, and $\omega$-convex along generalized geodesics for an Osgood modulus of convexity $\omega(x)$ satisfying $\tilde{\omega}(x) = o(\sqrt{x})$ as $x \to 0$. Furthermore, suppose that there exists $\tau_*\in(0, +\infty]$ so that for all $\mu \in \P_{2}(\Rd)$ and $0 \leq \tau <\tau_*$, $J_\tau \mu \neq \emptyset$.
\end{assumption}

We now turn to several examples of energy functionals satisfying this assumption.

\begin{example}[energies with Newtonian attraction] \label{Newt att ex}
Define
\begin{align} \label{psidef1}
\mathcal{N}(x) := \begin{cases}  \frac{1}{2 \pi} \log(|x|) &\text{ if }d=2, \\
\frac{1}{d(2-d)\alpha_d}|x|^{d-2} &\text{ if } d \neq 2, \end{cases} \quad \quad 
\psi(x) &:= \begin{cases} x (\log x)^2 & \text{ if } 0\leq x \leq e^{-1-\sqrt{2}}, \\ x+ 2(1+\sqrt{2})e^{-1-\sqrt{2}}  & \text{ if }x > e^{-1-\sqrt{2}} , \end{cases} 
 \end{align}
 where $\alpha_d$ is the volume of the $d$-dimensional unit ball. Fix $M>0$ and consider the following energies,
\begin{enumerate}[(i)]
\item Keller-Segel energy of bounded densities  \cite{CarrilloLisiniMainini}: \label{bdd KS}
\[ E(\mu):= \begin{cases}  \frac{1}{2} \int \mu(x) \mathcal{N}*\mu(x) dx + \int \mu(x) \log \mu(x) dx & \text{ if } \|\mu\|_\infty \leq M , \\ + \infty &\text{ otherwise.} \end{cases} \]
\item  congested aggregation via Newtonian interaction \cite{CraigKimYao}:
\[ E(\mu):= \begin{cases} \frac{1}{2} \int \mu(x) \mathcal{N}*\mu(x) dx & \text{ if } \|\mu\|_\infty \leq M , \\ + \infty &\text{ otherwise.} \end{cases} \] \label{cong agg}
\item drift diffusion approximation of congested aggregation \cite{CraigKimYao}: \\
 for $\nu \in L^\infty(\Rd)$ with $\|\nu\|_\infty \leq M$,
\[ E(\mu) := \begin{cases} \int \mathcal{N}*\nu(x) \mu(x) dx + \frac{1}{m-1} \int \mu^m(x) dx &\text{ for $\mu \ll \mathcal{L}^d$,} \\
+\infty &\text{ otherwise} .\end{cases}\] \label{drift approx}
\end{enumerate}
All three of these energies satisfy assumption \ref{main assumptions} with $\lambda_\omega = -C_{d,M} \leq0$ and $ \omega(x) =  \sqrt{x \psi(x)}$.
For $d \geq 2$, this is a consequence of Theorem \ref{constrained interaction theorem} and Proposition \ref{Wexamples prop}. For $d=1$, $\mathcal{N}(x)$ is convex, hence so are all three energies \cite{McCannConvexity}---in particular, they are  $\omega$-convex with $\lambda_\omega = 0$.
\end{example}

\begin{example}[repulsive-attractive interaction energy]
Consider the following repulsive-attractive energy of densities with bounded $L^m(\Rd)$ norm \cite{CraigTopaloglu} for fixed $M>0$:
\begin{align*}
& E(\mu):= \begin{cases}  \frac{1}{2} \int \mu(x) \left( |x-y|^q/q - |x-y|^p/p \right) \mu(y) dx dy & \text{ if } \|\mu\|_m \leq M, \\ + \infty &\text{ otherwise,} \end{cases} \\
&2-d \leq p <0 < q\leq 2 \text{ and }m \geq d/(p+d-2).
 \end{align*}
Theorem \ref{constrained interaction theorem} ensures that this energy satisfies assumption \ref{main assumptions}  in $d \geq 3$ for $\lambda_\omega = -C_{d,M} <0$  and $\omega(x) = \sqrt{x \psi(x)}$, with $\psi(x)$ as in (\ref{psidef1}).
\end{example}

\begin{example}[polynomial potential energies and $\phi$-uniformly convex energies] \label{potential granular}
Consider the following potential energies that arise in models of granular media \cite{CarrilloMcCannVillani}:
\begin{align*}
&E(\mu) = \frac{\beta}{b+2} \int_\Rd |x|^{b+2} d \mu(x) \quad \text{ for }\beta, b \geq 0 .
\end{align*}
By Proposition \ref{Carrillo et al Prop}, these energies are $\omega$-convex along generalized geodesics
for
\[\lambda_\omega =\frac{ 2^{b+2}}{b+2} \text{ and } \omega(x) = \begin{cases}  x^{(b+2)/2} &\text{ for }0 \leq x \leq 1, \\ 1 &\text{ otherwise.} \end{cases} \]

More generally, Proposition \ref{Carrillo et al Prop} ensures that any energy which is $\phi$-uniformly convex, in the sense of Carrillo, McCann, and Villani  \cite[Definition 2, Assumptions ($\phi_0$)-($\phi_2$)]{CarrilloMcCannVillani} and differentiable along geodesics is also $\omega$-convex along  geodesics, for an Osgood modulus of convexity $\omega(x)$ satisfying $\tilde{\omega}(x) = o(\sqrt{x})$ as $x \to 0$.
\end{example}

\begin{example}[vortex motion in superconductors] \label{vortex example}
Ambrosio and Serfaty \cite{AmbrosioSerfaty} considered the gradient flow of the following energy, corresponding to the Chapman-Rubinstein-Schatzman mean-field model for superconductivity \cite{ChapmanRubinsteinSchatzman}:
\begin{align*}
 \Phi_\beta (\mu) = \frac{\beta}{2} \mu(\Omega) + \frac{1}{2} \int_\Omega |\grad h_\mu|^2 + |h_\mu - 1|^2 , \quad \quad  &   \beta \geq 0 , \ \Omega \subseteq \R^2 \text{ smooth and bounded, } \\
&\begin{cases} -\Delta h_\mu + h_\mu = \mu &\text{ in } \Omega , \\
 h_\mu = 1 &\text{ on } \partial \Omega. \end{cases} 
\end{align*}
If this energy is restricted to the set of measures with $\|\mu\|_\infty \leq M$, i.e.,
\[ \Phi_\beta^\infty(\mu) := \begin{cases} \Phi_\beta(\mu) &\text{ if } \|\mu\|_\infty \leq M , \\ +\infty &\text{ otherwise,} \end{cases} \]
then, by \cite[Proposition 3.5]{AmbrosioSerfaty} and Proposition \ref{omega convex sufficient}, $\Phi^\infty_\beta(\mu)$ is $\omega$-convex along geodesics for $\lambda_\omega = - C_{M,\Omega} <0$  and $\omega(x) = \sqrt{x \psi(x)}$, with $\psi(x)$ as in (\ref{psidef1}).

\end{example}

\begin{example}[aggregation equation in $L^p$ spaces] \label{Lp agg example}
Bertozzi, Laurent, and Rosado \cite{BertozziLaurentRosado} showed that solutions of the aggregation equation 
\[ \partial_t \mu(t) + \grad \cdot (\mu(t) v(t)) = 0 , \quad v(t) = - \grad K*\mu(t) , \  \mu: [0,T] \to \P_2(\Rd) , \]
with $\|\mu(t)\|_p \leq M$ for $1< p < +\infty$ and $\grad K \in W^{1,p'}(\Rd)$ for $1/p + 1/p' = 1$, are well-posed on bounded time intervals. These solutions are gradient flows of the height constrained interaction energy restricted to the corresponding $L^p$ space,
\[ E(\mu) = \begin{cases} \frac{1}{2} \int K(x-y) d \mu(x) d \mu(y) &\text{ if } \|\mu\|_p \leq M , \\ + \infty &\text{ otherwise, }\end{cases}  \]
which is $\omega$-convex along generalized geodesics with $\lambda_\omega = -C_{M, \|\grad K\|_{W^{1,p'}}} < 0$ and $\omega(x) = x$. (In particular, these energies are semiconvex with respect to the Wasserstein metric.)
\end{example}

We close this section by reviewing some elementary properties of the ordinary differential equation (\ref{omega ode}) associated to $\omega(x)$.
By separation of variables, when the solution of (\ref{omega ode}) exists, it is given by
\begin{align} \label{ode solution} F_t(x) = \begin{cases} x &\text{ if }\lambda_\omega = 0 , \\ \Phi^{-1}( \Phi(x) + t) &\text{ if } \lambda_\omega \neq 0 , \end{cases}   \text{ where } \Phi:(0, +\infty) \to \R \text{ is defined by } \Phi(x)= \int_1^x \frac{dy}{\lambda_\omega \omega(y)} .
\end{align}
If $\lambda \leq 0$, $F_t(x)$ solves (\ref{omega ode}) for all $t \geq 0$, and if $\lambda >0$, $F_t(x)$ solves (\ref{omega ode}) for $0 \leq t < \Phi(+\infty) - \Phi(x)$. $F_t(x)$ and its spatial inverse $F^{-1}_t(x) = F_{-t}(x)$ are continuous and strictly increasing in $x$. If $\lambda_\omega \leq 0$, $F_t(x)$ is nonincreasing in $t$, and if $\lambda_\omega >0$, $F_t(x)$ is nondecreasing in $t$.

In order to prove that the discrete gradient flow converges, we will make frequent use of a time discretization of the ordinary differential equation (\ref{omega ode}). For $\tau \geq 0$, the function
\begin{align} \label{ftaudef}
 f_\tau(x):= \begin{cases} x + \lambda_\omega\tau \omega(x) &\text{ if } x \geq 0, \\ 0 &\text{ if } x < 0,\end{cases}
 \end{align}
 is the first step of the explicit Euler method for the ordinary differential equation (\ref{omega ode}) and 
\[ f^{(m)}_\tau(x) =  \underbrace{f_\tau\circ f_\tau \circ \dots \circ f_\tau(x)}_{\text{$m$ times}} \]
is the $m$th step of this method.
The function $f_\tau(x)$ plays a role analogous to the function $x \mapsto (1+\lambda \tau)x$ in the semiconvex case \cite{AGS, Craig}. In particular, as $m \to +\infty$,  $f^{(m)}_{t/m}(x)$ converges to $F_t(x)$:

\begin{proposition} \label{odeproposition}
For $x \geq 0$ and
\[ 0 \leq t < T := \begin{cases}  \min\{ \Phi(+\infty)/\lambda_\omega , \Phi(+\infty) - \Phi(x) \} &\text{ if } \lambda_\omega >0, \\+ \infty & \text{ if } \lambda_\omega \leq 0 , \end{cases} \]
suppose $F_t(x)$ is the unique solution of the ordinary differential equation (\ref{omega ode}). For $m$ sufficiently large,
\begin{align} \label{odeerrorest}
|F_t(x) - f^{(m)}_{t/m}(x)| \leq \begin{cases}
 \tF_{ |\lambda_\omega| t} \left(\frac{|\lambda_\omega| t \omega(F_t(x))}{m} \right) &\text{ if } \lambda_\omega > 0 , \\
 \tF_{|\lambda_\omega| t} \left( \frac{|\lambda_\omega| t  \omega(x)}{m}  \right) &\text{ if } \lambda_\omega \leq 0 .
\end{cases}  \end{align}
In particular,  $\lim_{m \to \infty} f^{(m)}_{t/m}(x) = F_t(x)$.
\end{proposition}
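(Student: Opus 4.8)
The plan is to prove the error estimate \eqref{odeerrorest} by treating $f^{(m)}_{t/m}(x)$ as the explicit Euler approximation of the ODE \eqref{omega ode} on the interval $[0,t]$ with uniform step size $h = t/m$, and then invoke the standard one-step/global-error accumulation argument, but carried out in a way that uses $\tomega$ (the Osgood modulus of continuity of $\omega$) rather than a Lipschitz constant. First I would set up notation: let $y_k = f^{(k)}_{h}(x)$ denote the discrete iterates and $z_k = F_{kh}(x)$ the exact solution sampled at the grid points, so $y_0 = z_0 = x$ and $e_k = |y_k - z_k|$ is the quantity to bound. The key monotonicity facts to establish first are that $f_h$ is nondecreasing on $[0,\infty)$ for $h$ small enough (using that $\omega$ is nondecreasing and continuous, so $x \mapsto x + \lambda_\omega h \omega(x)$ is increasing once $|\lambda_\omega| h$ is small relative to the local increments of $\omega$), and — when $\lambda_\omega > 0$ — that the iterates stay below $F_t(x)$, while when $\lambda_\omega \le 0$ they stay above; this is what lets me replace $\omega(y_k)$ and $\omega(z_k)$ by either $\omega(F_t(x))$ or $\omega(x)$ in the final bound, matching the two cases in the statement.

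Next I would derive the one-step consistency error: for the exact solution, $z_{k+1} - z_k = \int_{kh}^{(k+1)h} \lambda_\omega \omega(F_s(x))\,ds$, while the Euler step gives $y_{k+1} - y_k = \lambda_\omega h\,\omega(y_k)$ applied to the perturbed iterate. Subtracting, $e_{k+1} \le e_k + |\lambda_\omega| h\,|\omega(y_k) - \omega(z_k)| + (\text{local truncation error from sampling } \omega(F_s(x)) \text{ vs } \omega(z_k))$. The first correction term is $\le |\lambda_\omega| h\, \tomega(e_k)$ by the Osgood modulus property $|\omega(a)-\omega(b)| \le \tomega(|a-b|)$. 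For the truncation term, I would bound $|\omega(F_s(x)) - \omega(z_k)| \le \tomega(|F_s(x) - F_{kh}(x)|)$ and, since $|F_s(x) - F_{kh}(x)| \le \int_{kh}^{s} |\lambda_\omega| \omega(F_r(x))\,dr \le |\lambda_\omega| h\, \omega(F_t(x))$ (or $\le |\lambda_\omega| h\, \omega(x)$ in the dissipative case, using monotonicity of $F_r(x)$ in $r$), the per-step truncation error is at most $|\lambda_\omega| h\, \tomega\!\left(|\lambda_\omega| h\, \omega(F_t(x))\right)$. This yields the discrete recursion
\[
 e_{k+1} \le e_k + |\lambda_\omega| h\, \tomega(e_k) + |\lambda_\omega| h\, \tomega\!\left(\tfrac{|\lambda_\omega| t\, \omega(F_t(x))}{m}\right), \qquad e_0 = 0 .
\]

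Finally I would recognize this recursion as the explicit Euler scheme (with the same step $h$) for the comparison ODE $\dot w = |\lambda_\omega| \tomega(w) + |\lambda_\omega|\tomega(|\lambda_\omega| t\,\omega(F_t(x))/m)$ — equivalently, after a shift, a perturbation of $\dot{\tF} = -\lambda_\omega^- \tomega(\tF)$ from \eqref{omega ode}. Using that $\tomega$ is concave (hence subadditive, by \eqref{tomega own modulus}) and nondecreasing, a discrete Gronwall / comparison argument shows $e_m$ is dominated by the solution at time $|\lambda_\omega| t$ of the ODE started from the "frozen" forcing level, which is exactly $\tF_{|\lambda_\omega| t}\!\left(|\lambda_\omega| t\, \omega(F_t(x))/m\right)$; in the case $\lambda_\omega \le 0$ one instead freezes at $\omega(x)$, giving the second line of \eqref{odeerrorest}. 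The statement "for $m$ sufficiently large" is used precisely to guarantee $h = t/m$ is small enough that $f_h$ is monotone and that the iterates remain in the interval of existence (when $\lambda_\omega>0$, the restriction $t < T$ keeps $F_t(x)$ finite, and one checks the Euler iterates do not overshoot). The limit $\lim_{m\to\infty} f^{(m)}_{t/m}(x) = F_t(x)$ follows because $\tomega$ is continuous with $\tomega(0)=0$, so the right-hand side of \eqref{odeerrorest} tends to $0$.

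I expect the main obstacle to be the comparison step that converts the discrete recursion into the closed-form bound $\tF_{|\lambda_\omega|t}(\cdots)$: one must carefully argue that the Euler iterates for the comparison ODE lie below the true solution (this needs concavity of $\tomega$, since for concave right-hand sides explicit Euler overshoots — so one actually wants a lower-vs-upper comparison handled via subadditivity rather than naive monotone iteration), and one must verify the "freezing" of the inhomogeneous forcing term is legitimate, i.e. that replacing the iteration-dependent truncation contributions by their maximum over $k$ only increases $e_m$. Keeping track of which direction each inequality goes in the two sign regimes of $\lambda_\omega$, and ensuring the interval-of-existence constraint $t<T$ is never violated along the way, is the delicate bookkeeping here.
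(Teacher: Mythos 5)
Your overall skeleton (view $f^{(m)}_{t/m}$ as explicit Euler for (\ref{omega ode}), use the Osgood modulus $\tomega$ in place of a Lipschitz constant, exploit monotonicity of $s\mapsto F_s(x)$ to freeze $\omega(F_t(x))$ resp.\ $\omega(x)$, conclude by a Gr\"onwall--Bihari comparison) is the same as the paper's, but the step you yourself flag as delicate is where the argument genuinely breaks. By anchoring the one-step error at the grid samples $z_k=F_{kh}(x)$ you produce the recursion $e_{k+1}\le e_k+|\lambda_\omega| h\,\tomega(e_k)+|\lambda_\omega| h\,\tomega(a)$ with $a=|\lambda_\omega| t\,\omega(F_t(x))/m$, i.e.\ a forcing term $\tomega(a)$ at every step, and you then claim the forced evolution started from $0$ is dominated by the unforced flow started from $a$, namely by $\tF_{|\lambda_\omega| t}(a)$. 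That comparison is an identity only in the linear case $\tomega(x)=x$; for a strictly subadditive (concave) $\tomega$ such as the log-Lipschitz modulus that is the whole point of this paper, the comparison fails. Indeed, if $v$ solves $\dot v=\tomega(v)$, $v(0)=a$, and $w$ solves $\dot w=\tomega(w)+\tomega(a)$, $w(0)=0$, then $\frac{d}{ds}(v-w)\le\tomega(v-w)-\tomega(a)$, which is strictly negative and bounded away from zero once $v-w$ is small; so $w$ overtakes $v$ after a time of order $a/\tomega(a)\to 0$, and for every fixed $t>0$ and $m$ large one has $w(|\lambda_\omega| t)>\tF_{|\lambda_\omega| t}(a)$. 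A correct discrete Bihari applied to your recursion only yields a weaker bound of the shape $\tF_{|\lambda_\omega| t}\bigl(|\lambda_\omega| t\,\tomega(a)\bigr)$ (or a version with $2|\lambda_\omega| t$), which still gives the limit statement but not the estimate (\ref{odeerrorest}) as stated.

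The paper sidesteps this by never splitting through $F_{kh}(x)$: it introduces the piecewise-constant interpolant $\bar F_{s,h}(x)=f_h^{(i)}(x)$ for $ih\le s<(i+1)h$, bounds $|\omega(F_s(x))-\omega(\bar F_{s,h}(x))|\le\tomega(g_h(s))$ with $g_h(s)=|F_s(x)-\bar F_{s,h}(x)|$, and after iterating over grid intervals obtains, for every $t'\le t$, the single integral inequality $g_h(t')\le a+|\lambda_\omega|\int_0^{t'}\tomega(g_h(s))\,ds$, where the lone additive constant $a=|\lambda_\omega| h\,\omega(F_t(x))$ (resp.\ $|\lambda_\omega| h\,\omega(x)$) comes only from comparing $F_{t'}$ with $F_{\lfloor t'/h\rfloor h}$ and is made uniform in $t'$ precisely by the time-monotonicity of $F$; Bihari's inequality then gives exactly $\tF_{|\lambda_\omega| t}(a)$. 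If you want to salvage a purely discrete argument you would have to reproduce this structure (keep the truncation inside the argument of a single $\tomega$, not as a separate per-step summand). Two smaller inaccuracies in your setup: for $\lambda_\omega<0$ the map $f_h$ need not be nondecreasing for any small $h$ when $\omega'(0^+)=+\infty$ (this is why Lemma \ref{monotonicitylem} carries an extra error term), and for $\lambda_\omega\le 0$ the Euler iterates lie below, not above, the exact solution; neither is essential to your recursion, but they should not be stated as facts to be "established first."
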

\noindent We defer the proof of this proposition to appendix section \ref{ode appendix}.

In what follows, we will make frequent use of the following monotonicity properties of $f_\tau(x)$:

\begin{lemma} \label{monotonicitylem}
Suppose $\omega(x)$ is an Osgood modulus of convexity and $\tilde{\omega}(x) = o(\sqrt{x})$ as $x \to 0$. For any $r \geq1$, define 
$ c_r:= \max_{0 \leq x \leq r} \tilde{\omega}(x)/\sqrt{x}$.
\begin{enumerate}[(i)]
\item $\begin{aligned}[t]
&\text{For } 0 \leq x \leq y  , && f_\tau(x) \leq f_\tau(y) && \text{if } \lambda_\omega \geq 0, \\
&\text{For } 0 \leq x \leq y \leq r, \tau < (c_r |\lambda_\omega|)^{-1},  && f_\tau(x) \leq f_\tau(y) +|\lambda_\omega| \tau \tilde{\omega}\left( \lambda_\omega^2 c_{r}^2 \tau^2 \right) \leq f_\tau(y) + \lambda_\omega^2 c_r^2 \tau^2 && \text{if } \lambda_\omega < 0.
\end{aligned}$ \label{ftau monotone} \vspace{2mm}
\item \label{ftau break}
$\begin{aligned}[t]
&\text{For $x,y \geq 0$, } & \hspace{2.55cm} f_\tau(x+y) &\leq f_\tau(x) + y +\lambda_\omega \tau \tilde{\omega}(y)  &&\hspace{2.55cm} \text{ if }\lambda_\omega  \geq 0 , \\
&& f_\tau(x+y) &\leq f_\tau(x) + y &&  \hspace{2.55cm}\text{ if }\lambda_\omega < 0 .
\end{aligned}$
\end{enumerate}
\end{lemma}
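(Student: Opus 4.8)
The plan is to reduce all four inequalities to three elementary facts and a short case analysis. The three facts I would isolate first are: \textbf{(a)} on $[0,+\infty)$ both $x \mapsto x$ and $\omega$ are nondecreasing, so $\omega(y) - \omega(x) \geq 0$ whenever $x \leq y$; \textbf{(b)} since $\tomega$ is an Osgood modulus of continuity for $\omega$, we have $|\omega(x) - \omega(y)| \leq \tomega(|x-y|)$, hence $0 \leq \omega(y) - \omega(x) \leq \tomega(y-x)$ for $0 \leq x \leq y$; and \textbf{(c)} $c_r$ is well defined and $\tomega(s) \leq c_r\sqrt{s}$ for all $0 \leq s \leq r$ --- indeed $s \mapsto \tomega(s)/\sqrt{s}$ extends continuously to $0$ on the compact interval $[0,r]$ because $\tomega(s) = o(\sqrt{s})$, so its supremum is attained and finite.

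For part (i) with $\lambda_\omega \geq 0$: recalling $f_\tau(x) = x + \lambda_\omega\tau\omega(x)$ from \eqref{ftaudef}, this is a sum of nondecreasing functions of $x$ with nonnegative coefficients, so $f_\tau(x) \leq f_\tau(y)$ is immediate from (a). For part (i) with $\lambda_\omega < 0$ and $0 \leq x \leq y \leq r$, $\tau < (c_r|\lambda_\omega|)^{-1}$, I would write $f_\tau(y) - f_\tau(x) = (y-x) - |\lambda_\omega|\tau(\omega(y)-\omega(x)) \geq (y-x) - |\lambda_\omega|\tau\tomega(y-x)$ by (b), so that, with $s := y-x \in [0,r]$, it remains to bound $g(s) := s - |\lambda_\omega|\tau\tomega(s)$ below by $-|\lambda_\omega|\tau\tomega(\lambda_\omega^2 c_r^2\tau^2)$. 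First observe that $\tau < (c_r|\lambda_\omega|)^{-1}$ together with $r \geq 1$ gives $\lambda_\omega^2 c_r^2\tau^2 < 1 \leq r$, so (c) is applicable at that point. Then I split cases: if $s \leq \lambda_\omega^2 c_r^2\tau^2$, then $g(s) \geq -|\lambda_\omega|\tau\tomega(s) \geq -|\lambda_\omega|\tau\tomega(\lambda_\omega^2 c_r^2\tau^2)$ by monotonicity of $\tomega$; if $s > \lambda_\omega^2 c_r^2\tau^2$, i.e. $\sqrt{s} > |\lambda_\omega| c_r\tau$, then by (c) $g(s) \geq s - |\lambda_\omega|\tau c_r\sqrt{s} = \sqrt{s}\,(\sqrt{s} - |\lambda_\omega| c_r\tau) > 0$. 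This yields the first inequality. The second inequality follows by applying (c) once more at $\lambda_\omega^2 c_r^2\tau^2 \leq r$: $\tomega(\lambda_\omega^2 c_r^2\tau^2) \leq c_r\sqrt{\lambda_\omega^2 c_r^2\tau^2} = c_r^2|\lambda_\omega|\tau$, so that $|\lambda_\omega|\tau\tomega(\lambda_\omega^2 c_r^2\tau^2) \leq \lambda_\omega^2 c_r^2\tau^2$.

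For part (ii), both cases reduce to a pointwise comparison of $\omega(x+y)$ with $\omega(x)$ for $x,y \geq 0$, since $f_\tau(x+y) - f_\tau(x) - y = \lambda_\omega\tau(\omega(x+y) - \omega(x))$. If $\lambda_\omega \geq 0$, (b) applied to the pair $x \leq x+y$ gives $\omega(x+y) - \omega(x) \leq \tomega(y)$, which is exactly $f_\tau(x+y) \leq f_\tau(x) + y + \lambda_\omega\tau\tomega(y)$; if $\lambda_\omega < 0$, (a) gives $\omega(x+y) - \omega(x) \geq 0$, hence $\lambda_\omega\tau(\omega(x+y)-\omega(x)) \leq 0$ and $f_\tau(x+y) \leq f_\tau(x) + y$.

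The argument is essentially bookkeeping, so there is no deep obstacle; the one point needing care is the case split in part (i) for $\lambda_\omega < 0$, and in particular the observation that the time-step restriction $\tau < (c_r|\lambda_\omega|)^{-1}$ combined with $r \geq 1$ forces $\lambda_\omega^2 c_r^2\tau^2 \in [0,r]$, which is precisely what lets (c) be used at that argument to produce both the correction term and its simpler bound $\lambda_\omega^2 c_r^2\tau^2$. (One could alternatively use that $\tomega$ may be taken concave, making $g$ convex, and locate its minimizer, but the case split avoids this.)
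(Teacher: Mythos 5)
Your proof is correct and follows essentially the same route as the paper's: the same modulus-of-continuity bound $\omega(y)-\omega(x)\le\tomega(y-x)$, the same use of $c_r$ via $\tomega(s)\le c_r\sqrt{s}$ on $[0,r]$, and an equivalent case split (yours on $s\lessgtr\lambda_\omega^2 c_r^2\tau^2$, the paper's on $|\lambda_\omega|\tau\tomega(\delta)\lessgtr\delta$, which leads to the same dichotomy). The only difference is that you make explicit the observations the paper leaves implicit, namely that $\tau<(c_r|\lambda_\omega|)^{-1}$ and $r\ge1$ force $\lambda_\omega^2 c_r^2\tau^2\le r$, and the resulting bound $|\lambda_\omega|\tau\tomega(\lambda_\omega^2 c_r^2\tau^2)\le\lambda_\omega^2 c_r^2\tau^2$.
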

\noindent See appendix section \ref{ode appendix} for the proof.

\section{Convergence of the discrete gradient flow} \label{section 3}
We now turn to our main result: all discrete gradient flow sequences converge to the unique solution of the gradient flow, hence the gradient flow of an $\omega$-convex energy is well-posed. To quantify the rate of convergence of the discrete gradient flow, we follow a similar approach to Crandall and Liggett in their seminal work on nonlinear semigroups on a Banach space \cite{CrandallLiggett}. Such an approach has been previously used in the context of metric space gradient flows of semiconvex energies by Cl\'ement and Desch \cite{ClementDesch} and in the specific case of Wasserstein gradient flows of semiconvex energies in previous work by the author \cite{Craig}.

In section \ref{one step section}, we prove several estimates for one step of the discrete gradient flow, comparing $\mu$ to elements of $J_\tau \mu$. These estimates include a discrete time version of the ``evolution variational inequality'', by which we defined the gradient flow of an $\omega$-convex energy, as well as a contraction inequality for the discrete gradient flow. At the end of this section, we iterate the contraction inequality to estimate the Wasserstein distance between the $n$th elements of two discrete gradient flow sequences in terms of their initial data.

In section \ref{convergence subsection}, we apply these estimates to quantify the rate of convergence of the discrete gradient flow sequence. As in Rasmussen's version of Crandall and Liggett's original proof \cite{R,Y}, the key element is an asymmetric recursive inequality for discrete gradient flow sequences with different time steps. Once we have this, we use an inductive argument to control the Wasserstein distance between the discrete gradient flows. This allows us to conclude that the discrete gradient flow sequence is Cauchy and, since the Wasserstein metric is complete, must converge. Our argument also provides the rate of convergence.

In section \ref{properties section}, we show that the limit of any discrete gradient flow sequence is, in fact, the unique Wasserstein gradient flow of the energy. We then use approximation by the discrete gradient flow sequence to obtain fundamental properties of the gradient flow. In particular, we show that solutions of the gradient flow satisfy a contraction inequality, which ensures their stability under perturbations of the initial data.

\subsection{One-step estimates} \label{one step section}

In this section, we prove two estimates quantifying the behavior of one step of the discrete gradient flow. First,  we prove a discrete time evolution variational inequality in terms of $W_{2,\bmu}$. This generalizes a previous result of the author from the semiconvex case to the $\omega$-convex case \cite[Theorem 1.22]{Craig}. 
\begin{proposition}[discrete EVI] \label{discreteEVI}
Suppose $E$ satisfies Assumption \ref{main assumptions}. Then for all $\mu, \nu \in D(E)$, there exists $\bmu \in \P(\Rd \times \Rd \times \Rd)$ with $\pi^{1,3} \# \bmu \in \Gamma_0(\mu_\tau,\mu)$ and $ \pi^{2,3} \# \bmu \in \Gamma_0(\nu,\mu)$ so that
 \begin{align} \label{discevieqn} f_\tau(W_{2,\bmu}^2(\mu_\tau, \nu)) - W_2^2(\mu,\nu) \leq 2 \tau ( E(\nu) - E(\mu_\tau) ) - W_2^2(\mu, \mu_\tau)  \quad \text{ for } 0 \leq \tau < \tau_*.
\end{align}
\end{proposition}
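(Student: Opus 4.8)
The plan is to combine the variational characterization of $\mu_\tau \in J_\tau\mu$ with the $\omega$-convexity of $E$ along a suitable generalized geodesic, together with the convexity identity \eqref{convexitytransportmetric} for the transport metric $W_{2,\bnu}^2$. First I would fix $\mu,\nu \in D(E)$ and $\mu_\tau \in J_\tau\mu$, and choose the three-plan $\bmu \in \P(\Rd\times\Rd\times\Rd)$ so that $\pi^{1,3}\#\bmu \in \Gamma_0(\mu_\tau,\mu)$ and $\pi^{2,3}\#\bmu \in \Gamma_0(\nu,\mu)$; this induces a generalized geodesic $\mu_\alpha := ((1-\alpha)\pi_1 + \alpha\pi_2)\#\bmu$ from $\mu_\tau$ to $\nu$ with base $\mu$, along which both the $\omega$-convexity inequality (Definition \ref{convexitydef}(\ref{omega convex def}) with $d = W_{2,\bmu}$) and the identity \eqref{convexitytransportmetric} (with $\rho = \mu$) hold.

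The core of the argument is to exploit the minimality of $\mu_\tau$. Since $\mu_\tau$ minimizes $\nu' \mapsto \frac{1}{2\tau}W_2^2(\mu,\nu') + E(\nu')$, evaluating at the competitor $\mu_\alpha$ gives
\[
\frac{1}{2\tau}W_2^2(\mu,\mu_\tau) + E(\mu_\tau) \leq \frac{1}{2\tau}W_2^2(\mu,\mu_\alpha) + E(\mu_\alpha) \leq \frac{1}{2\tau}W_{2,\bmu}^2(\mu_\alpha,\mu) + E(\mu_\alpha),
\]
using $W_2 \leq W_{2,\bmu}$ (Remark \ref{transport metric bounds W2}). Now I would substitute \eqref{convexitytransportmetric} for $W_{2,\bmu}^2(\mu_\alpha,\mu)$ and the $\omega$-convexity bound for $E(\mu_\alpha)$, noting $W_{2,\bmu}^2(\mu_0,\mu_1) = W_{2,\bmu}^2(\mu_\tau,\nu)$ and, crucially, $W_{2,\bmu}^2(\mu_1,\mu) = W_{2,\bmu}^2(\nu,\mu) = W_2^2(\nu,\mu)$ (since $\pi^{2,3}\#\bmu$ is an optimal plan) and similarly $W_{2,\bmu}^2(\mu_0,\mu) = W_2^2(\mu_\tau,\mu)$. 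After cancelling the common $(1-\alpha)$-weighted terms on both sides and dividing by $\alpha$, one is left with an inequality of the form
\[
\frac{1}{2\tau}\Big[ W_{2,\bmu}^2(\mu_\tau,\nu) - W_2^2(\mu_\tau,\mu)\Big] + \frac{\lambda_\omega}{2\tau}\cdot\frac{(1-\alpha)\omega(\alpha^2 s) + \alpha\,\omega((1-\alpha)^2 s)}{\alpha} \ \text{(times a factor)} \leq \text{something},
\]
where $s = W_{2,\bmu}^2(\mu_\tau,\nu)$; taking $\alpha \to 0^+$ and using continuity of $\omega$ with $\omega(0)=0$ (so that $\omega(\alpha^2 s)/\alpha \to 0$ and $\omega((1-\alpha)^2 s) \to \omega(s)$) collapses the convexity penalty to $\frac{\lambda_\omega}{2\tau}\omega(s)$. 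Rearranging yields exactly
\[
W_{2,\bmu}^2(\mu_\tau,\nu) + \lambda_\omega\tau\,\omega(W_{2,\bmu}^2(\mu_\tau,\nu)) - W_2^2(\mu,\nu) \leq 2\tau(E(\nu)-E(\mu_\tau)) - W_2^2(\mu,\mu_\tau),
\]
and the left side is $f_\tau(W_{2,\bmu}^2(\mu_\tau,\nu)) - W_2^2(\mu,\nu)$ by definition \eqref{ftaudef}, since the argument is nonnegative.

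The main obstacle, and the step requiring care, is the passage $\alpha \to 0^+$: one must check that $\frac{d}{d\alpha}$-type remainders behave correctly and that the division by $\alpha$ followed by the limit is legitimate — this is where $\omega(0)=0$ and continuity of $\omega$ are essential, and it is analogous to (but slightly more delicate than) the corresponding limit in the semiconvex proof in \cite{Craig}. A secondary point is bookkeeping: being careful that the correct marginals of $\bmu$ are optimal plans so that the mixed $W_{2,\bmu}^2$ terms genuinely reduce to $W_2^2$, and that the geodesic $\mu_\alpha$ produced this way is admissible as a competitor in the proximal minimization (it is, since any element of $\P_2(\Rd)$ is). Everything else is rearrangement of the four inequalities above.
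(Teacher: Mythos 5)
Your proposal follows essentially the same route as the paper's proof: test the minimality of $\mu_\tau$ against the generalized geodesic from $\mu_\tau$ to $\nu$ with base $\mu$ induced by $\bmu$, use the identity (\ref{convexitytransportmetric}) (equivalently, the $\tfrac{1}{2\tau}$-convexity of $W_2^2(\mu,\cdot)$ along generalized geodesics with base $\mu$) together with optimality of the marginals $\pi^{1,3}\#\bmu$ and $\pi^{2,3}\#\bmu$ so the mixed terms reduce to $W_2^2$, then divide by $\alpha$ and let $\alpha \to 0^+$. The one point you should correct is the justification of that limit: continuity of $\omega$ with $\omega(0)=0$ does \emph{not} imply $\omega(\alpha^2 s)/\alpha \to 0$; for $\omega(x)=\sqrt{x}$ the quotient equals $\sqrt{s}$ for every $\alpha$, and one can even build Osgood moduli for which the quotient fails to vanish along a subsequence. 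What actually makes the limit work is the decay hypothesis in Assumption \ref{main assumptions}: since $\tomega$ is a modulus of continuity for $\omega$ and $\omega(0)=0$, one has $\omega(x)\le\tomega(x)$, and $\tomega(x)=o(\sqrt{x})$ as $x\to 0$ gives $\omega(\alpha^2 s)\le\tomega(\alpha^2 s)=o(\alpha)$. This is exactly where the paper's standing assumption $\tomega(x)=o(\sqrt{x})$ enters the proof; with that citation in place of ``continuity and $\omega(0)=0$,'' your argument coincides with the paper's.
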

\begin{proof}
Since $E$ is $\omega$-convex along generalized geodesics, there exists $\bmu$ as above so that $E$ is convex along the curve $\nu_\alpha: = ((1-\alpha)\pi^1 + \alpha \pi^2) \# \bmu$. Combining this with the fact that $\frac{1}{2 \tau} W_2^2(\mu, \cdot)$ is $\frac{1}{2 \tau}$-convex along all generalized geodesics with base $\mu$ \cite[Lemma 9.2.1]{AGS}
\begin{align*}
&\frac{1}{2\tau} W_2^2(\mu,\mu_\tau) + E(\mu_\tau)  \leq \frac{1}{2\tau} W_2^2(\mu,\nu_\alpha) + E(\nu_\alpha) \\
&\quad \leq (1-\alpha) \left[ \frac{1}{2\tau} W_2^2(\mu,\mu_\tau) + E(\mu_\tau) \right] + \alpha \left[ \frac{1}{2\tau} W_2^2(\mu,\nu) + E(\nu) \right]  - \frac{1}{2\tau} \alpha (1-\alpha) W_{2,\bmu}^2(\mu_\tau,\nu)  \\
&\quad \quad - \frac{\lambda_\omega}{2} \left[ (1-\alpha) \omega(\alpha^2 W^2_{2,\bmu}(\mu_\tau,\nu)) + \alpha \omega( (1-\alpha)^2 W^2_{2,\bmu}(\mu_\tau,\nu)) \right]
\end{align*}
Dividing by $\alpha$, multiplying by $2\tau$, and rearranging,
\begin{align*}
&(1-\alpha) W_{2,\bmu}^2(\mu_\tau,\nu)  - W_2^2(\mu,\nu) + \lambda_\omega\tau \left[ \frac{1}{\alpha} (1-\alpha) \omega \left(\alpha^2 W^2_{2,\bmu}(\mu_\tau,\nu) \right) + \omega\left( (1-\alpha)^2 W^2_{2,\bmu}(\mu_\tau,\nu) \right) \right]\\ 
&\quad \leq 2 \tau \left[ E(\nu) -   E(\mu_\tau) \right]  - W_2^2(\mu,\mu_\tau).
\end{align*}
Since $\lim_{\alpha \to 0} \frac{1}{\alpha} \omega(\alpha^2 W^2_{2,\bmu}(\mu_\tau,\nu)) = 0$, sending $\alpha$ to zero gives the result.
\end{proof}

We now apply this result to  prove a contraction inequality for the discrete gradient flow.

\begin{theorem}[contraction inequality] \label{cothm}
Suppose $E$ satisfies assumption \ref{main assumptions}.
Then for $\mu, \nu \in D(E)$,
\begin{align*} f_\tau^{(2)}(W_{2}^2(\mu_\tau, \nu_\tau)) \leq W_2^2(\mu,\nu) + o(\tau) \text{ as } \tau \to 0 . \end{align*}
In particular, 
\begin{align*}
 f_\tau^{(2)}(W_{2}^2(\mu_\tau, \nu_\tau))  &\leq W_2^2(\mu,\nu) + \lambda_\omega \tau \tilde{\omega} \left( 2 \tau  (E(\nu_\tau) - E(\mu_\tau) )   \right) +2 \tau ( E(\mu) - E(\mu_\tau) )   && \text{ if } \lambda_\omega >  0, \\
 f_\tau^{(2)}(W_{2}^2(\mu_\tau, \nu_\tau)) &\leq W_2^2(\mu,\nu) - \lambda_\omega \tau  \tomega\left(R^2 W_2(\nu,\nu_\tau) \right)  +     2 \tau(E(\mu) - E(\mu_\tau)) + 3\lambda_\omega^2 c_r^2\tau^2  && \text{ if } \lambda_\omega \leq 0 ,
\end{align*}
where the first inequality holds for all $0\leq \tau < \tau_*$ and the second inequality holds for 
\begin{align*} 0\leq \tau<  \min \left\{ 1,\tau_*, (c_r |\lambda_\omega|)^{-1}, \frac{1}{2} (E(\mu) - E(\mu_\tau))^{-1}, \frac{1}{2}(E(\nu) - E(\nu_\tau))^{-1} \right\},
\end{align*}
where $R \geq \max \{W_2(\mu,\nu), 3 \}$ and $r :=4( R^2+ |\lambda_\omega| \tomega(R^2)) $.
\end{theorem}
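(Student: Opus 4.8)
The plan is to apply the discrete EVI (Proposition \ref{discreteEVI}) twice—once with the roles of $\mu,\nu$ played by $(\mu,\nu_\tau)$ and once by $(\nu,\mu_\tau)$—and then combine the two resulting inequalities by using the monotonicity properties of $f_\tau$ from Lemma \ref{monotonicitylem}. Concretely, first I would apply Proposition \ref{discreteEVI} with base point $\mu$ and target $\nu_\tau$, obtaining a three-plan $\bmu$ with $\pi^{1,3}\#\bmu\in\Gamma_0(\mu_\tau,\mu)$, $\pi^{2,3}\#\bmu\in\Gamma_0(\nu_\tau,\mu)$, and
\[
f_\tau\!\big(W_{2,\bmu}^2(\mu_\tau,\nu_\tau)\big) - W_2^2(\mu,\nu_\tau) \leq 2\tau\big(E(\nu_\tau)-E(\mu_\tau)\big) - W_2^2(\mu,\mu_\tau).
\]
Symmetrically, apply Proposition \ref{discreteEVI} with base point $\nu$ and target $\mu_\tau$, giving a plan $\bnu$ with
\[
f_\tau\!\big(W_{2,\bnu}^2(\nu_\tau,\mu_\tau)\big) - W_2^2(\nu,\mu_\tau) \leq 2\tau\big(E(\mu_\tau)-E(\nu_\tau)\big) - W_2^2(\nu,\nu_\tau).
\]
Since $W_{2,\bmu}\geq W_2$ (Remark \ref{transport metric bounds W2}) and $f_\tau$ is built from the nondecreasing $\omega$, $f_\tau$ is nondecreasing on the relevant range, so $f_\tau(W_{2,\bmu}^2(\mu_\tau,\nu_\tau))\geq f_\tau(W_2^2(\mu_\tau,\nu_\tau))$; the same for $\bnu$. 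The idea is then to feed the second inequality's left-hand side into $f_\tau$ once more.

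The key step is to iterate $f_\tau$. Apply $f_\tau$ to both sides of the $\bnu$-inequality (using that $f_\tau$ is monotone, up to the correction terms in Lemma \ref{monotonicitylem}(i) when $\lambda_\omega<0$) and then use Lemma \ref{monotonicitylem}(ii) to split $f_\tau$ of a sum: with $x = W_2^2(\nu,\mu_\tau) - W_2^2(\nu,\nu_\tau) + 2\tau(E(\mu_\tau)-E(\nu_\tau))$ playing the role of the main term and the leftover as $y$, one gets
\[
f_\tau^{(2)}\!\big(W_2^2(\mu_\tau,\nu_\tau)\big) \leq f_\tau\!\big(W_2^2(\nu,\mu_\tau)\big) + 2\tau\big(E(\mu_\tau)-E(\nu_\tau)\big) - W_2^2(\nu,\nu_\tau) + (\text{error}),
\]
where the error is $\lambda_\omega\tau\tomega(\cdot)$ in the $\lambda_\omega>0$ case and absorbed into an $O(\tau^2)$ term in the $\lambda_\omega<0$ case. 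Now substitute the first ($\bmu$) inequality to bound $f_\tau(W_2^2(\mu,\nu_\tau))$—rewriting $f_\tau(W_2^2(\nu,\mu_\tau))$ via the $\bmu$-inequality after relabeling—so that the $\pm 2\tau(E(\mu_\tau)-E(\nu_\tau))$ and the $\pm W_2^2(\nu,\nu_\tau)$, $\pm W_2^2(\mu,\mu_\tau)$ terms telescope, leaving $W_2^2(\mu,\nu)$ plus the residual energy difference $2\tau(E(\mu)-E(\mu_\tau))$ (which is nonnegative and $o(1)$ by (\ref{basic continuity estimates})) plus the $\tomega$/$\tau^2$ error terms. For the quantitative bounds one uses (\ref{onestep1}) to replace $2\tau(E(\nu_\tau)-E(\mu_\tau))$-type quantities by distances: $2\tau(E(\nu)-E(\nu_\tau))\geq W_2^2(\nu,\nu_\tau)$, which under the stated smallness of $\tau$ keeps all arguments of $\tomega$ within a fixed radius $r$, justifying the definition $r=4(R^2+|\lambda_\omega|\tomega(R^2))$ and the constant $c_r$.

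The main obstacle is the bookkeeping in the $\lambda_\omega<0$ case: each use of Lemma \ref{monotonicitylem}(i)–(ii) introduces a $\lambda_\omega^2 c_r^2\tau^2$ correction (the "$f_\tau(x)\leq f_\tau(y)+|\lambda_\omega|\tau\tomega(\lambda_\omega^2 c_r^2\tau^2)\leq f_\tau(y)+\lambda_\omega^2 c_r^2\tau^2$" estimate, which itself needs $\tomega(z)\leq c_r\sqrt z$ and $\tomega$ concave/subadditive as in (\ref{tomega own modulus})), and one must verify that exactly three such corrections arise, so the final constant is $3\lambda_\omega^2 c_r^2\tau^2$, and that all intermediate squared distances genuinely lie below $r$—this is where the hypothesis $R\geq\max\{W_2(\mu,\nu),3\}$ and the upper bounds on $\tau$ (in terms of $(E(\mu)-E(\mu_\tau))^{-1}$ and $(E(\nu)-E(\nu_\tau))^{-1}$) get used, via (\ref{onestep1}) and the triangle inequality $W_2(\mu_\tau,\nu_\tau)\leq W_2(\mu,\nu)+W_2(\mu,\mu_\tau)+W_2(\nu,\nu_\tau)$. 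In the $\lambda_\omega>0$ case the monotonicity of $f_\tau$ is unconditional and no $\tau^2$ terms appear, so there the only subtlety is tracking the $\tomega$ argument through the one split via Lemma \ref{monotonicitylem}(ii), yielding the clean $\lambda_\omega\tau\tomega(2\tau(E(\nu_\tau)-E(\mu_\tau)))$ term. Finally, the qualitative statement "$f_\tau^{(2)}(W_2^2(\mu_\tau,\nu_\tau))\leq W_2^2(\mu,\nu)+o(\tau)$" follows from either quantitative inequality by noting $\tomega(x)=o(\sqrt x)$ forces all error terms to be $o(\tau)$ and that $E(\mu)-E(\mu_\tau)\to 0$, $W_2(\nu,\nu_\tau)\to 0$ as $\tau\to 0$.
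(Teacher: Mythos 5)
Your high-level strategy (two applications of the discrete EVI, Proposition \ref{discreteEVI}, combined with the monotonicity Lemma \ref{monotonicitylem}) is the right one, but the pairing you set up is muddled: your second inequality is the EVI for $\nu_\tau$ tested against $\mu_\tau$, and if you simply added it to the first you would be left with $W_2^2(\mu,\nu_\tau)+W_2^2(\nu,\mu_\tau)$ on the right, which cannot be compared with $W_2^2(\mu,\nu)$; what your chain actually uses is that EVI together with the ``relabeled'' EVI for $\mu_\tau$ tested against the \emph{initial datum} $\nu$. That working pair is exactly the paper's pair ($\mu_\tau$ vs.\ $\nu_\tau$, and $\nu_\tau$ vs.\ $\mu$) with the roles of $\mu$ and $\nu$ exchanged, and the energy terms then telescope to $2\tau\bigl(E(\nu)-E(\nu_\tau)\bigr)$, not $2\tau\bigl(E(\mu)-E(\mu_\tau)\bigr)$ as you claim; this is harmless up to relabeling, but it shows the bookkeeping is not under control.

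The genuine gap is in the $\lambda_\omega\le 0$ case. Lemma \ref{monotonicitylem}(\ref{ftau break}) for $\lambda_\omega<0$ reads $f_\tau(x+y)\le f_\tau(x)+y$ only for $y\ge 0$, whereas the increment you feed into the split, $y=2\tau\bigl(E(\mu_\tau)-E(\nu_\tau)\bigr)-W_2^2(\nu,\nu_\tau)+O(\tau^2)$, has no sign; you cannot drop it when it is negative, because the cancellation of the $E(\mu_\tau)$ (resp.\ $E(\nu_\tau)$) terms between the two EVIs is precisely what leaves an $o(\tau)$ residual, so the ``error-free'' use of (ii) you assert is unjustified. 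Relatedly, your accounting (``exactly three corrections $\lambda_\omega^2c_r^2\tau^2$'') never produces the term $-\lambda_\omega\tau\,\tomega\bigl(R^2W_2(\nu,\nu_\tau)\bigr)$ appearing in the statement. In the paper this term has a different origin, and it is the key step for $\lambda_\omega\le 0$: the two discrete EVIs are \emph{added} (not substituted one into the other), the identity $f_\tau(x)-x=\lambda_\omega\tau\omega(x)$ brings out $\omega\bigl(W_2^2(\mu,\nu_\tau)\bigr)$, and this is exchanged for $\omega\bigl(W_2^2(\mu,\nu)\bigr)$ using the Osgood modulus of continuity together with $W_2^2(\nu,\nu_\tau)\le 2\tau\bigl(E(\nu)-E(\nu_\tau)\bigr)\le 1$ and $R\ge\max\{W_2(\mu,\nu),3\}$, which is exactly where $\tomega\bigl(R^2W_2(\nu,\nu_\tau)\bigr)$ comes from; only afterwards is $f_\tau$ applied a second time, yielding the third $\lambda_\omega^2c_r^2\tau^2$. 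Without this manipulation (or an explicit treatment of the signed increment, which costs an additional term of the form $|\lambda_\omega|\tau\,\tomega\bigl(2\tau|E(\nu_\tau)-E(\mu_\tau)|\bigr)$ and changes the stated bound), your sketch establishes at best the qualitative $o(\tau)$ display, not the quantitative inequality of the theorem.
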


\begin{proof}
By Proposition \ref{discreteEVI},
\begin{align} \label{discrete EVI1}
f_\tau(W_{2, \bmu}^2(\mu_\tau, \nu_\tau)) - W_2^2(\mu,\nu_\tau) &\leq 2 \tau (E(\nu_\tau) - E(\mu_\tau) ) - W_2^2(\mu, \mu_\tau)  ,  \\
f_\tau(W_{2, \bnu}^2(\nu_\tau, \mu)) - W_2^2(\mu,\nu) &\leq 2 \tau ( E(\mu) - E(\nu_\tau) ) - W_2^2(\nu, \nu_\tau) .  \label{discrete EVI2}
\end{align}

First, we consider the case when $\lambda_\omega > 0$. 
Since $W_{2}^2(\mu_\tau, \nu_\tau) \leq W_{2, \bmu}^2(\mu_\tau, \nu_\tau) $, applying Lemma \ref{monotonicitylem} (\ref{ftau monotone}) and (\ref{ftau break}) to inequality  (\ref{discrete EVI1}) gives
\begin{align} f_\tau^{(2)}(W_{2}^2(\mu_\tau, \nu_\tau)) 
&\leq f_\tau \left( W_2^2(\mu,\nu_\tau)\right) +  \lambda_\omega \tau \tilde{\omega} \left( 2 \tau (E(\nu_\tau) - E(\mu_\tau) )   \right) + 2 \tau (E(\nu_\tau) - E(\mu_\tau) ) . \label{discrete EVI6}
 \end{align}
Likewise, since $W_{2}^2(\mu, \nu_\tau) \leq W_{2, \bnu}^2(\mu, \nu_\tau)$, applying Lemma \ref{monotonicitylem} (\ref{ftau monotone}) to inequality (\ref{discrete EVI2}) and adding to  (\ref{discrete EVI6}) gives the result.

We now consider the case $\lambda_\omega \leq 0$, again starting from inequalities (\ref{discrete EVI1}) and (\ref{discrete EVI2}). 
We begin by applying the triangle inequality, inequality (\ref{onestep1}), and the upper bounds on $\tau$ to obtain  crude bounds for $W_{2, \bmu}^2(\mu_\tau, \nu_\tau) $ and $W_{2, \bnu}^2(\nu_\tau, \mu)$, both of which are less than
\begin{align*}
3 \left[ W_2^2(\mu, \mu_\tau) + W_2^2(\nu,\nu_\tau) +  W_2^2(\mu,\nu) \right] \leq 3 (2 + R^2) \leq   4( R^2+ |\lambda_\omega| \tomega(R^2)) =:r .
\end{align*}
 Since $W_{2}^2(\mu_\tau, \nu_\tau) \leq W_{2, \bmu}^2(\mu_\tau, \nu_\tau) \leq r$ and  $W_{2}^2(\nu_\tau, \mu) \leq W_{2, \bnu}^2(\nu_\tau, \mu) \leq r$, by Lemma \ref{monotonicitylem} (\ref{ftau monotone}), we may replace  inequalities (\ref{discrete EVI1}) and (\ref{discrete EVI2}) with
\begin{align}
f_\tau(W_{2}^2(\mu_\tau, \nu_\tau)) - W_2^2(\mu,\nu_\tau) &\leq 2 \tau (E(\nu_\tau) - E(\mu_\tau) ) - W_2^2(\mu, \mu_\tau) +\lambda_\omega^2 c_r^2 \tau^2 , \label{discrete EVI4}  \\
f_\tau(W_{2}^2(\nu_\tau, \mu)) - W_2^2(\mu,\nu) &\leq 2 \tau ( E(\mu) - E(\nu_\tau) ) - W_2^2(\nu, \nu_\tau)  +\lambda_\omega^2 c_r^2 \tau^2 . \label{discrete EVI5}
\end{align}

Adding inequalities (\ref{discrete EVI4}) and (\ref{discrete EVI5}) together and using $f_\tau(x)-x = \lambda_\omega\tau \omega(x)$  gives
\begin{align} \label{cont1} f_\tau(W_{2}^2(\mu_\tau, \nu_\tau)) -W_2^2(\mu,\nu) + \lambda_\omega \tau \omega(W_2^2(\mu,\nu_\tau)) \leq 2 \tau(E(\mu) - E(\mu_\tau)) + 2\lambda_\omega^2 c_r^2\tau^2\  .
\end{align}
Since $\tomega$ is a nondecreasing modulus of continuity for $\omega$ ,
\begin{align*}
&\omega(W_2^2(\mu,\nu_\tau)) - \omega(W_2^2(\mu,\nu)) \leq \tomega(|W_2^2(\mu,\nu_\tau) -W_2^2(\mu,\nu)|) \\
&\leq \tomega(|W_2(\mu,\nu_\tau) -W_2(\mu,\nu)||W_2(\mu,\nu_\tau) +W_2(\mu,\nu)|) \leq \tomega\left(W_2(\nu,\nu_\tau) (2W_2(\mu,\nu) +W_2(\nu,\nu_\tau) \right) \\
&\leq  \tomega(  W_2(\nu,\nu_\tau)(2R + 1)) \leq  \tomega \left( R^2 W_2(\nu,\nu_\tau) \right) \end{align*}
where, in the last step, we again use inequality (\ref{onestep1}) and the upper bounds on $\tau$.
Multiplying both sides of this inequality by $-\lambda_\omega\tau$, adding to (\ref{cont1}), and rearranging gives
\begin{align*}  &f_\tau(W_{2}^2(\mu_\tau, \nu_\tau)) \leq W_2^2(\mu,\nu)  -\lambda_\omega \tau \omega(W_2^2(\mu,\nu)) - \lambda_\omega \tau   \tomega \left( R^2  W_2(\nu,\nu_\tau) \right)  +  2 \tau(E(\mu) - E(\mu_\tau)) + 2\lambda_\omega^2 c_r^2\tau^2 .
\end{align*}

In order to apply  Lemma \ref{monotonicitylem} (\ref{ftau monotone}) a second time, we  obtain bound the right hand side by
\begin{align} \label{cont2} R^2 -\lambda_\omega \tomega(R^2) + 1 + 2< r  .
\end{align}
Thus, by Lemma \ref{monotonicitylem} (\ref{ftau monotone}) and (\ref{ftau break}),
\begin{align*} & f_\tau^{(2)}(W_{2}^2(\mu_\tau, \nu_\tau)) \\
&\quad \leq  f_\tau(W_2^2(\mu,\nu))   -\lambda_\omega \tau \omega(W_2^2(\mu,\nu)) - \lambda_\omega \tau  \tomega \left( R^2  W_2(\nu,\nu_\tau) \right)  +   2 \tau(E(\mu) - E(\mu_\tau)) +3\lambda_\omega^2 c_r^2\tau^2  , \\
&\quad  =  W_2^2(\mu,\nu) - \lambda_\omega \tau  \tomega \left( R^2  W_2(\nu,\nu_\tau) \right)  +    2 \tau(E(\mu) - E(\mu_\tau)) + 3\lambda_\omega^2 c_r^2\tau^2 .
\end{align*}
which gives the result for $\lambda_\omega \leq 0$.

\end{proof}

By iterating the contraction inequality, we are able to bound the Wasserstein distance between two discrete gradient flow sequences in terms of the distance between their initial data.
\begin{corollary}[$n$-step contraction inequality] \label{iteratedcocor}
Suppose $E$ satisfies assumption \ref{main assumptions} for $\lambda_\omega \leq 0$. Then for all $\mu,\nu \in D(E)$, and $t>0$, 
\[F_{2t}(W_2^2(\mu^n_{t/n}, \nu^n_{t/n} )) \leq W_2(\mu,\nu) +  o(1) \text{ as } n \to +\infty . \]
In particular, suppose that
\[n > t \max \{ 1, \tau_*^{-1}, (c_r \lambda_\omega)^2, R^2 , \} , \]
where $R\geq \max \{ W_2(\mu,\nu) +\sqrt{2(t+1)} \left(\sqrt{C(\mu,t)} + \sqrt{C(\nu,t)}\right), 3\}$ and $r :=4(t+1)( R^2+ |\lambda_\omega| \tomega(R^2))$.
Then,
\begin{align*}
&F_{2t}(W_2^2(\mu^n_{t/n}, \nu^n_{t/n}))  \leq W_2^2(\mu,\nu) + |\lambda_\omega| t \tomega \left(R^3 \sqrt{t/n} \right) + 2 Rt/n + 5 \lambda_\omega^2 c_r^2 t^2/n   + F_{2|\lambda_\omega|t} \left(\frac{2|\lambda_\omega| t \omega(R^2)}{n}  \right) .\nonumber
\end{align*}
\end{corollary}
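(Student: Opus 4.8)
The plan is to iterate the one-step contraction inequality of Theorem \ref{cothm} $n$ times with time step $\tau = t/n$, and then pass from the explicit Euler iterate $f^{(2n)}_{t/n}$ to the flow map $F_{2t}$ using Proposition \ref{odeproposition}. Throughout, write $\tau = t/n$ and $a_j := W_2^2(\mu^j_\tau,\nu^j_\tau)$, so that $a_0 = W_2^2(\mu,\nu)$ and $a_n = W_2^2(\mu^n_{t/n},\nu^n_{t/n})$. The first task is to verify that, under the stated lower bound on $n$, the hypotheses of Theorem \ref{cothm} hold at \emph{every} step $j = 1,\dots,n$ with the same pair $(R,r)$. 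By the triangle inequality and (\ref{base case}) (used with ``$n$'' replaced by $j-1$), $W_2(\mu,\mu^{j-1}_\tau)\le\sqrt{2t\,C(\mu,t)}$ and similarly for $\nu$, so each $W_2(\mu^{j-1}_\tau,\nu^{j-1}_\tau)$ — hence each $a_j$ — is at most $R^2$; this is exactly why $R$ is chosen as in the statement. The requirements $\tau<1$, $\tau<\tau_*$, $\tau<(c_r|\lambda_\omega|)^{-1}$ are immediate from $n>t\max\{1,\tau_*^{-1},(c_r\lambda_\omega)^2\}$, and the energy-gap requirements $2\tau(E(\mu^{j-1}_\tau)-E(\mu^j_\tau))<1$ (and its $\nu$-analogue) hold because these nonnegative gaps are each dominated by their total sum $E(\mu)-E(\mu^n_\tau)\le C(\mu,t)$ from (\ref{tau mu def}), while $R^2\ge 2(t+1)C(\mu,t)$ and $n>tR^2$ force $2\tau C(\mu,t)<1$. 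Applying Theorem \ref{cothm} at step $j$ therefore gives
\[ f^{(2)}_\tau(a_j)\ \le\ a_{j-1}-\lambda_\omega\tau\,\tomega\!\left(R^2 W_2(\nu^{j-1}_\tau,\nu^j_\tau)\right)+2\tau\left(E(\mu^{j-1}_\tau)-E(\mu^j_\tau)\right)+3\lambda_\omega^2 c_r^2\tau^2\ =:\ a_{j-1}+\varepsilon_j . \]

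Next I would chain these $n$ inequalities from $j=n$ down to $j=1$: at each stage one applies a further $f^{(2)}_\tau$ to the running bound, using Lemma \ref{monotonicitylem}(\ref{ftau monotone}) to preserve the inequality up to an error of $\lambda_\omega^2 c_r^2\tau^2$ per composition, and Lemma \ref{monotonicitylem}(\ref{ftau break}) (in the form $f_\tau(x+y)\le f_\tau(x)+y$, valid for $\lambda_\omega<0$) to extract the newly produced error $\varepsilon_j$ at no cost. A simple induction shows every intermediate quantity stays in $[0,r]$ — this is where the factor $t+1$ in the definition of $r$ is used, as the accumulated error carries an overall factor of $t$ — and produces
\[ f^{(2n)}_{t/n}(a_n)\ \le\ a_0+\sum_{j=1}^n\varepsilon_j+2(n-1)\lambda_\omega^2 c_r^2\tau^2 . \]
One then estimates $\sum_j\varepsilon_j$ termwise: the energy contributions telescope, $\sum_j 2\tau(E(\mu^{j-1}_\tau)-E(\mu^j_\tau))=2\tau(E(\mu)-E(\mu^n_\tau))\le 2\tau C(\mu,t)\le 2Rt/n$; the $\tau^2$ contributions give $3n\lambda_\omega^2 c_r^2\tau^2=3\lambda_\omega^2 c_r^2 t^2/n$, which with the monotonicity losses $2(n-1)\lambda_\omega^2 c_r^2\tau^2$ totals at most $5\lambda_\omega^2 c_r^2 t^2/n$; and for the $\tomega$ contributions one uses that $\tomega$ is concave, so that Jensen's inequality together with the bound $\sum_j W_2(\nu^{j-1}_\tau,\nu^j_\tau)\le\sqrt{2t\,C(\nu,t)}$ from (\ref{base case}) yields $\sum_j|\lambda_\omega|\tau\,\tomega\!\left(R^2 W_2(\nu^{j-1}_\tau,\nu^j_\tau)\right)\le|\lambda_\omega|t\,\tomega\!\left(R^2\sqrt{2tC(\nu,t)}/n\right)\le|\lambda_\omega|t\,\tomega\!\left(R^3\sqrt{t/n}\right)$, the last step again by the choice of $R$.

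Finally, since $f^{(2n)}_{t/n}=f^{(2n)}_{(2t)/(2n)}$, Proposition \ref{odeproposition} (case $\lambda_\omega\le 0$) gives $F_{2t}(a_n)-f^{(2n)}_{t/n}(a_n)\le\tF_{2|\lambda_\omega|t}\!\left(|\lambda_\omega|t\,\omega(a_n)/n\right)$; bounding $\omega(a_n)\le\omega(R^2)$ and $\tF\le F$ (which holds for $\lambda_\omega\le 0$ because $\omega\le\tomega$ makes the $\tomega$-flow decay at least as fast as the $\omega$-flow) turns the right side into $F_{2|\lambda_\omega|t}\!\left(2|\lambda_\omega|t\,\omega(R^2)/n\right)$. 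Combining the three displayed estimates yields the asserted inequality, and the first, asymptotic statement follows by letting $n\to\infty$, since $\tomega$ and $F_s$ are continuous and vanish at $0$. I expect the principal obstacle to be the bookkeeping just described: confirming that the hypotheses of Theorem \ref{cothm} genuinely hold uniformly in $j$ with one pair $(R,r)$ — which is where the a priori energy bound $C(\mu,t)$ from (\ref{tau mu def}) enters and fixes the form of $R$ — and tracking the accumulation of the non-monotonicity error of $f_\tau$ across the $n$-fold composition while keeping every intermediate quantity inside $[0,r]$.
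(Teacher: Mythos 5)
Your proposal is correct and follows essentially the same route as the paper: iterate the one-step contraction of Theorem \ref{cothm} with $\tau = t/n$, verify via (\ref{tau mu cons})--(\ref{base case}) and the choice of $R$, $r$ that its hypotheses hold uniformly in $j$, compose the $f^{(2)}_\tau$'s using Lemma \ref{monotonicitylem} (with the same $5\lambda_\omega^2 c_r^2 t^2/n$ error bookkeeping), telescope the energy terms, and pass to $F_{2t}$ via Proposition \ref{odeproposition}. The only (cosmetic) deviations are that you chain forward rather than running the paper's backward induction, and you bound the $\tomega$-sum by Jensen's inequality applied to $\sum_j W_2(\nu^{j-1}_\tau,\nu^j_\tau)$ instead of the paper's termwise bound $W_2(\nu^{j-1}_\tau,\nu^j_\tau)\leq R\sqrt{\tau}$, which yields the same (in fact slightly sharper) estimate.
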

\begin{proof}
Let $\tau:= t/n$. By inequalities (\ref{tau mu cons}) and (\ref{base case}), for all $j = 1, \dots, n$,
\begin{align} W_2(\nu^{j-1}_{\tau},\nu^{j}_{\tau}) \leq \sqrt{2\tau C(\mu,t)}  \leq R \sqrt{\tau}   \ \text{ and } \   W_2(\mu^{j}_{\tau}, \nu^{j}_{\tau}) \leq W_2(\mu, \mu^{j}_{\tau}) + W_2(\nu, \nu^{j}_{\tau}) + W_2(\mu, \nu)  \leq R.
 \label{iterated0}
\end{align}
We claim that it suffices to show, for all $j =1, \dots, n$,
\begin{align*}
&f^{(2j)}_{\tau}(W_2^2(\mu^{n}_\tau, \nu^{n}_\tau)) \leq W_2^2(\mu^{n-j}_\tau, \nu^{n-j}_\tau) + |\lambda_\omega| \tau j \tomega \left(R^3 \sqrt{\tau} \right) + 2\tau (E(\mu^{n-j}) - E(\mu^n_{\tau})) +5 \lambda_\omega^2 c_r^2 \tau^2 j .
\end{align*}
The result then follows by taking $j=n$ and applying inequality (\ref{tau mu def}) and Proposition \ref{odeproposition}.

We prove the claim by induction. To simplify notation, we suppress the subscripts on $\mu^j_\tau$ and $\nu^j_\tau$. 
The base case $j=1$ is a consequence of Theorem \ref{cothm} and inequalities (\ref{onestep1}) and (\ref{tau mu def}). Suppose that the result holds for $j-1$, i.e.,
\begin{align} \label{iterated1}
&f^{(2(j-1))}_{\tau}(W_2^2(\mu^{n}_\tau, \nu^{n}_\tau))\nonumber \\
&\quad \leq W_2^2(\mu^{n-j+1}_\tau, \nu^{n-j+1}_\tau) + |\lambda_\omega| \tau (j-1) \tomega \left(R^3  \sqrt{\tau} \right) + 2\tau(E(\mu^{n-j+1}) - E(\mu^n_{\tau})) +5 \lambda_\omega^2 c_r^2 \tau^2 (j-1) .
\end{align}
Note that for any $j =1, \dots, n$, the quantity on the right hand side is bounded by $r$.
Therefore, by Lemma \ref{monotonicitylem}, we may apply $f^{(2)}_\tau$ to both sides of (\ref{iterated1}) to obtain
\begin{align*}
f^{(2j)}_{\tau}(W_2^2(\mu^{n}_\tau, \nu^{n}_\tau)) 
& \leq f^{(2)}_\tau(W_2^2(\mu^{n-j+1}_\tau, \nu^{n-j+1}_\tau)) + |\lambda_\omega| \tau (j-1) \tomega \left(R^3 \sqrt{\tau} \right) + 2\tau(E(\mu^{n-j+1}) - E(\mu^n_{\tau})) \\
&\quad \quad  +5 \lambda_\omega^2 c_r^2 \tau^2 (j-1) + 2 \lambda_\omega^2 c_r^2 \tau^2 , \\
&\quad \leq W_2^2(\mu^{n-j}_\tau, \nu^{n-j}_\tau) + |\lambda_\omega| \tau j \tomega \left(R^3  \sqrt{\tau} \right) + 2\tau(E(\mu^{n-j}) - E(\mu^n_{\tau})) +5 \lambda_\omega^2 c_r^2 \tau^2 j ,
\end{align*}
where the second inequality follows from Theorem \ref{cothm}. This gives the result.
\end{proof}

\subsection{Convergence of discrete gradient flow} \label{convergence subsection}

We now use the estimates from the previous section to quantify the rate of convergence of the discrete gradient flow. Our argument relies strongly on convexity properties of the function $f_\tau(x)$. In general, our assumptions on the modulus of convexity $\omega(x)$ do not ensure $f_\tau(x)$ is convex. However, if an energy is $\omega$-convex with constant $\lambda_\omega$, Definition \ref{convexitydef} (\ref{omega convex def}) ensures that the energy is also $\tomega$-convex with constant $-\lambda_\omega^-$. Since $\tomega(x)$ is concave, this ensures that
\begin{align} \label{ftautildedef}
\tf_\tau(x) := \begin{cases} x - \lambda_\omega^- \tau \tomega(x) &\text{ if } x \geq 0, \\ 0 &\text{ if } x < 0.\end{cases}
 \end{align}
is always convex, which is sufficient for our estimates.

With these observations in hand, we now prove an asymmetric recursive inequality for the discrete gradient flow, analogous to a previous result by the author in the semiconvex case \cite[Theorem 2.4]{Craig}.
	
\begin{theorem}[asymmetric recursive inequality] \label{recineqthm} Suppose $E$ satisfies assumption \ref{main assumptions}. Then for all $T>0$ and $\mu \in D(E)$, there exists $\bar{\tau}>0$ and $\bar{C} \geq 0$, depending on $\tomega, \lambda_\omega^-,T,$ and $\tau_*$  so that for $0 \leq h \leq  \tau<   \bar{\tau}$ and $m, n \in \mathbb{N}$ with $mh, n \tau \leq T$,
\begin{align*}
 \tf^{(2m)}_h(W_2^2(\mu^n_\tau, \mu^m_h))  &\leq \frac{h}{\tau} \tf_h^{(2(m-1))}(W_2^2(\mu^{n-1}_\tau, \mu^{m-1}_h))  + \frac{\tau - h}{\tau} \tf_h^{(2(m-1))}(W_2^2(\mu^n,\mu^{m-1})) \\
 &\quad + \bar{C} \left[ h \tomega(\sqrt{\tau}) +  h^2 + \tomega(h^2) \right]+ 2h (E(\mu^{m-1}) - E(\mu^m)) .\end{align*}
\end{theorem}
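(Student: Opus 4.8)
The plan is to run the Crandall--Liggett/Rasmussen scheme: reduce the comparison of $\mu^n_\tau$ and $\mu^m_h$ to the comparison of two single proximal steps with the \emph{same} (small) step $h$ via Lemma \ref{proxmapdifftimelem}, and then feed in the one-step estimates of Section \ref{one step section}.

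\emph{Reduction.} Since $E$ is $\omega$-convex along generalized geodesics with constant $\lambda_\omega$, it is also $\tomega$-convex along generalized geodesics with constant $-\lambda_\omega^-\le 0$ (because $\omega\le\tomega$ and, by the monotonicity of the notions of convexity recorded after Definition \ref{convexitydef}, one may pass to a larger modulus and replace $\lambda_\omega$ by $-\lambda_\omega^-$). Hence Proposition \ref{discreteEVI} and Theorem \ref{cothm} hold verbatim with $f$ replaced by the function $\tf$ associated to $\tomega$ and $-\lambda_\omega^-$, where $\tf$ is convex because $\tomega$ is concave, and where $\tomega$ is its own Osgood modulus of continuity by \eqref{tomega own modulus}. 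After shrinking $\bar\tau$ and enlarging the auxiliary radius $r$ so that (by \eqref{tau mu cons}, \eqref{base case}, \eqref{tau mu def} with $n\tau,mh\le T$) all the $W_2$-distances and energy gaps occurring below stay $\le r$, Lemma \ref{monotonicitylem} applies to $\tf_h$: it is nondecreasing up to a correction $\lambda_\omega^- h\,\tomega((\lambda_\omega^-)^2 c_r^2 h^2)\le\bar C(\tomega(h^2)+h^2)$ and satisfies the exact break relation $\tf_h(x+y)\le\tf_h(x)+y$ for $y\ge0$; iterating the break, $\tf^{(k)}_h(x+y)\le\tf^{(k)}_h(x)+y$, and the monotonicity correction of a composite iterate is again a single $\bar C(\tomega(h^2)+h^2)$. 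From here on I argue with $\tf$, and all additive error terms below will be of size $\bar C[h\tomega(\sqrt\tau)+h^2+\tomega(h^2)]$.

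\emph{Resolvent identity and one-step comparison.} By Lemma \ref{proxmapdifftimelem}, $\mu^n_\tau\in J_h[\rho^n]$ where $\rho^n$ is the point at parameter $(\tau-h)/\tau$ on the geodesic from $\mu^{n-1}_\tau$ to $\mu^n_\tau$, so $W_2(\rho^n,\mu^n_\tau)=\tfrac h\tau W_2(\mu^{n-1}_\tau,\mu^n_\tau)\le\tfrac h\tau\sqrt{2\tau C(\mu,T)}\le\sqrt{2C(\mu,T)}\,\sqrt\tau$ by \eqref{tau mu cons} and $h\le\tau$. Apply Theorem \ref{cothm} (with modulus $\tomega$, constant $-\lambda_\omega^-$, step $h$) to the two proximal maps $\mu^m_h\in J_h[\mu^{m-1}_h]$ and $\mu^n_\tau\in J_h[\rho^n]$ in the roles ``$\mu\to\mu_\tau$'' $=$ ``$\mu^{m-1}_h\to\mu^m_h$'' and ``$\nu\to\nu_\tau$'' $=$ ``$\rho^n\to\mu^n_\tau$'' (its upper bounds on the step hold for $h<\bar\tau$, since by \eqref{tau mu def} the gaps $E(\mu^{m-1}_h)-E(\mu^m_h)$ and $E(\rho^n)-E(\mu^n_\tau)$ are bounded, the latter using $\tomega$-convexity of $E$ along the geodesic through $\rho^n$ together with $W_2(\mu^{n-1}_\tau,\mu^n_\tau)=O(\sqrt\tau)$). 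This gives
\[ \tf^{(2)}_h\big(W_2^2(\mu^n_\tau,\mu^m_h)\big)\ \le\ W_2^2(\rho^n,\mu^{m-1}_h)+2h\big(E(\mu^{m-1}_h)-E(\mu^m_h)\big)+\lambda_\omega^-\,h\,\tomega\big(R^2 W_2(\rho^n,\mu^n_\tau)\big)+3(\lambda_\omega^-)^2 c_r^2 h^2, \]
and, since $R^2W_2(\rho^n,\mu^n_\tau)\le R^2\sqrt{2C(\mu,T)}\,\sqrt\tau$ and $\tomega$ is nondecreasing and (being concave) subadditive, the third term is $\le\bar C\,h\,\tomega(\sqrt\tau)$. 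Applying $\tf^{(2(m-1))}_h$ to the whole inequality and peeling off the nonnegative additive terms — legitimate since all arguments stay $\le r$ — yields
\[ \tf^{(2m)}_h\big(W_2^2(\mu^n_\tau,\mu^m_h)\big)\ \le\ \tf^{(2(m-1))}_h\big(W_2^2(\rho^n,\mu^{m-1}_h)\big)+2h\big(E(\mu^{m-1}_h)-E(\mu^m_h)\big)+\bar C\big(h\tomega(\sqrt\tau)+h^2+\tomega(h^2)\big). \]

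\emph{Geodesic interpolation, and the main obstacle.} It remains to bound $\tf^{(2(m-1))}_h(W_2^2(\rho^n,\mu^{m-1}_h))$ by $\tfrac h\tau\tf^{(2(m-1))}_h(W_2^2(\mu^{n-1}_\tau,\mu^{m-1}_h))+\tfrac{\tau-h}{\tau}\tf^{(2(m-1))}_h(W_2^2(\mu^n_\tau,\mu^{m-1}_h))$ up to admissible error; by convexity and (approximate) monotonicity of $\tf^{(2(m-1))}_h$ and one more application of the break relation, it suffices to bound $W_2^2(\rho^n,\mu^{m-1}_h)$ by the corresponding convex combination of $W_2^2(\mu^{n-1}_\tau,\mu^{m-1}_h)$ and $W_2^2(\mu^n_\tau,\mu^{m-1}_h)$ up to an error absorbable into $\bar C[h\tomega(\sqrt\tau)+h^2+\tomega(h^2)]$. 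This is the delicate step: $W_2^2(\cdot,\mu^{m-1}_h)$ is \emph{not} convex along geodesics, so the exact identity \eqref{convexitytransportmetric} only holds along the generalized geodesic based at $\mu^{m-1}_h$, whereas $\rho^n$ lies on the geodesic based at $\mu^{n-1}_\tau$. One therefore estimates $W_2^2(\rho^n,\mu^{m-1}_h)$ through a (non-optimal) transport plan assembled from the geodesic $\mu^{n-1}_\tau\leadsto\mu^n_\tau$ and from $\mu^{m-1}_h$; expanding, the discrepancy from the exact convex combination is a pseudo-Wasserstein correction which, by Remark \ref{transport metric bounds W2} and the triangle inequality, is controlled by $W_2(\mu^{n-1}_\tau,\mu^n_\tau)$ together with the accumulated dissipation $\sum_j W_2^2(\mu^{j-1}_\tau,\mu^j_\tau)\le 2\tau C(\mu,T)$, and it is precisely here that $h\le\tau$ and the decay $\tomega(x)=o(\sqrt x)$ are used to put the error in the stated form. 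Combining this with the previous display gives the claim. This last step parallels the semiconvex argument of \cite[Theorem 2.4]{Craig}; the main remaining work is the bookkeeping needed to collect all error contributions into a single $\bar C[h\tomega(\sqrt\tau)+h^2+\tomega(h^2)]$ with $\bar C$ and $\bar\tau$ depending only on $\tomega,\lambda_\omega^-,T,\tau_*$ and the fixed datum $\mu$.
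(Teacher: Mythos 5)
Your first half follows the paper: the reduction to $\tomega$-convexity with constant $-\lambda_\omega^-$, the resolvent identity of Lemma \ref{proxmapdifftimelem} producing the interpolant $\rho^n$ (the paper's $\nu$), the application of the one-step contraction (Theorem \ref{cothm}) to the two $h$-steps, and the use of convexity/approximate monotonicity of $\tf_h$ to apply $\tf_h^{(2(m-1))}$ are all exactly the paper's inequality (\ref{asym1}) and its surroundings. The gap is in what you call the delicate step, and it is a genuine one: you reduce the theorem to the claim that $W_2^2(\rho^n,\mu^{m-1}_h)$ is bounded by $\frac h\tau W_2^2(\mu^{n-1}_\tau,\mu^{m-1}_h)+\frac{\tau-h}\tau W_2^2(\mu^n_\tau,\mu^{m-1}_h)$ up to an error of size $\bar C[h\tomega(\sqrt\tau)+h^2+\tomega(h^2)]$, and then assert this can be obtained from Remark \ref{transport metric bounds W2}, the triangle inequality, and the accumulated dissipation. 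This does not work quantitatively. The exact identity (\ref{convexitytransportmetric}) applies along the geodesic through $\rho^n$ only for the pseudo-metric $W_{2,\bmu^{n-1}}$, and the resulting cross term is $W_{2,\bmu^{n-1}}^2(\mu^n_\tau,\mu^{m-1}_h)$, not $W_2^2(\mu^n_\tau,\mu^{m-1}_h)$. Comparing the two by the triangle inequality costs an error of order $R\,W_2(\mu^{n-1}_\tau,\mu^n_\tau)=O(\sqrt\tau)$, with no factor of $h$; since the recursive inequality is iterated of order $T/h$ times, such an error accumulates to $O(\sqrt\tau/h)$ and destroys the convergence rate (take $h\ll\tau$, e.g. $h=\tau^2$). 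The accumulated dissipation $\sum_j W_2^2(\mu^{j-1}_\tau,\mu^j_\tau)\le 2\tau C(\mu,T)$ is not available inside a single application of the inequality, and since $(\P_2,W_2)$ is positively curved, the squared distance to $\mu^{m-1}_h$ satisfies the \emph{reverse} of the convexity inequality along the geodesic through $\rho^n$, so the clean convex-combination bound you reduce to is not true in general with an admissible error.

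The paper's proof does not attempt that bound. Instead it keeps the pseudo-metric term and disposes of it by a variational, not metric, mechanism: the ``Claim'' inside the proof of Theorem \ref{recineqthm} applies the discrete EVI (Proposition \ref{discreteEVI}) twice --- at step $h$ for $\mu^m_h\in J_h\mu^{m-1}_h$ tested against $\mu^n_\tau$, and at step $\tau$ for $\mu^n_\tau\in J_\tau\mu^{n-1}_\tau$ tested against $\mu^{m-1}_h$ --- and combines the two with weights $\tau$ and $h$. This trades $W_{2,\bmu^{n-1}}^2(\mu^n_\tau,\mu^{m-1}_h)$ for $\frac\tau h\bigl(W_2^2(\mu^n_\tau,\mu^{m-1}_h)-\tf_h(W_2^2(\mu^n_\tau,\mu^m_h))\bigr)+W_2^2(\mu^{n-1}_\tau,\mu^{m-1}_h)$ plus errors that genuinely carry a factor $h$ (or $\tau h$), and the leftover $\tf_h$-terms are then absorbed by adding $\frac{\tau-h}{h}\tf_h^{(2)}(W_2^2(\mu^n_\tau,\mu^m_h))$ to both sides (the Rasmussen-style step that creates the asymmetric structure), using $\tf_h^{(2)}(x)-\tf_h(x)=-\lambda_\omega^-h\tomega(\tf_h(x))$. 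This EVI-based exchange, which exploits the minimizing property of $\mu^n_\tau$ at step $\tau$ with the base point $\mu^{n-1}_\tau$ matching the base of the pseudo-metric, is the heart of the proof and is absent from your proposal; without it, the error bookkeeping you defer to cannot close.
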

\begin{remark}
If $|\partial E|(\mu^j) \leq C_\mu$ for all $1 \leq j \leq n$,  a similar argument gives an improved error estimate of $\bar{C} \left[ h \tomega( \tau) + h^2 + \tomega(h^2) \right]$, where $\bar{\tau}$ and $\bar{C}$ also depend on $C_\mu$. (This is analogous to \cite[Theorem 2.4]{Craig}.)
\end{remark}

\begin{proof}
To simplify notation, we suppress the subscripts on $\mu^n_\tau$ and $\mu^m_h$. First, we define $R$ to be sufficiently large and $\bar{\tau}$ sufficiently small for all previous results to apply. In particular, let
\begin{align}
 R&:= \max \left\{2 \sqrt{2 (T +1)C(\mu,T)}, 3 \right\} \nonumber \\
 r&:=4( R^2+ \lambda_\omega^- \tomega(R^2)), \label{rdef} \\
 \bar{\tau}&:= \min \left\{ 1,(c_r \lambda_\omega^-)^{-1},  R^{-2}/2, \tau_* \right\} \label{taubardef}
\end{align}
Define $\nu:={(\frac{\tau-h}{\tau}\bt_{\mu^{n-1}}^{\mu^n} + \frac{h}{\tau} \id) \# \mu^{n-1}}$, so by Lemma \ref{proxmapdifftimelem}, $\mu^n \in J_h \nu$. (We suppose that the optimal transport map $\bt_{\mu^{n-1}}^{\mu^n}$ exists for simplicity of notation---the analogous result holds in the more general case of optimal transport plans.)

Combining this with Theorem \ref{cothm},
\begin{align} \label{asym1}
\tf_h^{(2)}( W_2^2(\mu^n, \mu^m )) \leq W_2^2( \nu, \mu^{m-1} ) + \lambda_\omega^- h  \tomega\left(R^2 W_2(\nu,\nu_h) \right) + 2 h(E(\mu^{m-1}) - E(\mu^m)) + 3\lambda_\omega^2 c_r^2 h^2,
\end{align}
By Remark \ref{transport metric bounds W2} and equation (\ref{convexitytransportmetric}), we may bound the first term on the right hand side of (\ref{asym1}) by
\begin{align} \label{asyrecpf1}
W_2^2( \nu, \mu^{m-1} ) \leq W_{2, \bmu^{n-1}}^2( \nu, \mu^{m-1} ) \leq \frac{h}{\tau} W_{2}^2(\mu^{n-1},\mu^{m-1}) + \frac{\tau -
h}{\tau}W_{2, \bmu^{n-1}}^2(\mu^{m-1},\mu^n) .
\end{align}

The remaining estimates do not use the structure of the Wasserstein metric, so to ease notation, we abbreviate $W_{n,m} := W_2^2(\mu^n,\mu^m)$. Suppose the following claim holds.
\begin{align*}
&\text{\textbf{Claim:}} \\
&W_{2, \bmu^{n-1}}^2( \mu^{m-1},\mu^n) \leq \frac{\tau}{h} \left(W_{2}^2( \mu^n,\mu^{m-1}) - \tf_h(W_{2}^2( \mu^n,\mu^m)) \right)   +  W_2^2(\mu^{n-1},\mu^{m-1}) +  \lambda_\omega^- \tau  \tomega (W_2^2(\mu^n,\mu^m))  \\
&\quad + 2 \tau  \left(E(\mu^{m-1}) - E(\mu^m) \right) + \lambda_\omega^- \tau \left (2 \tomega( R^2 W_2(\mu^{n-1},\mu^n)) +\tomega( R^2 W_2(\mu^{m-1},\mu^m) \right)+3(\lambda_\omega^-)^2 c_r^2 h \tau
\end{align*}
Substituting this into inequality (\ref{asyrecpf1}) and then both into inequality (\ref{asym1}),
\begin{align} \label{asyrecpf25}
&\tf^{(2)}_h(W_{n,m}) \nonumber \\
&\quad \leq  \frac{h}{\tau} W_{n-1,m-1}+ \frac{\tau - h}{h}\left(W_{n,m-1} - \tf_h(W_{n,m}) \right)  +   \frac{\tau-h}{\tau} \left(W_{n-1,m-1} + \lambda_\omega^- \tau\tomega(W_{n,m})\right) + D_{n,m} ,
\end{align}
where the error term $D_{n,m}$ is given by
\begin{align*}
D_{n,m} &=  \lambda_\omega^- h  \tomega\left(R^2 W_2(\nu,\nu_h) \right) + 2  \tau \left(E(\mu^{m-1}) - E(\mu^m) \right) + 3(\lambda_\omega^-)^2 c_r^2 \tau h  \\
&\quad +  \lambda_\omega^- (\tau-h) \left(  2   \tomega(R^2 W_2(\mu^{n-1},\mu^n)) + \tomega(R^2 W_2(\mu^{m-1},\mu^m)) \right)  .
\end{align*}
Adding $\frac{\tau-h}{h} \tf_h^{(2)}(W_{n,m})$ to both sides of (\ref{asyrecpf25}) and using $\tf_h^{(2)}(W_{n,m}) - \tf_h(W_{n,m}) = -\lambda_\omega^- h \tomega (\tf_h(W_{n,m}))$,
\begin{align}\label{asyrecpf3}
\frac{\tau}{h} \tf^{(2)}_h(W_{n,m}) &\leq W_{n-1,m-1} + \frac{\tau - h}{h}W_{n,m-1} +   \lambda^-_\omega  (\tau- h)  \left[ \tomega(W_{n,m}) - \tomega (\tf_h(W_{n,m}) ) \right] +D_{n,m} .
\end{align}
To control the second to last term in this inequality, note that since $\tomega$ is subadditive and nondecreasing, 
\begin{align*}
 \tomega(W_{n,m}) - \tomega (\tf_h(W_{n,m}) ) \leq \tomega(|W_{n,m} - \tf_h(W_{n,m})|)  \leq \tomega(\lambda_\omega^- h \tomega(R^2)) \leq \left(\lambda_\omega^- \tomega(R^2)+1 \right) \tomega(h) .
\end{align*}
Substituting this into (\ref{asyrecpf3}), multiplying by $h/\tau$ and using $h/\tau \leq 1$,
\begin{align} \label{asyrecpf325}
 \tf^{(2)}_h(W_{n,m}) &\leq \frac{h}{\tau} W_{n-1,m-1}  + \frac{\tau - h}{\tau}W_{n,m-1}   +  \left(\lambda_\omega^- \tomega(R^2)+1 \right)  \lambda^-_\omega h \tomega(h)+ \frac{h}{\tau} D_{n,m} .
 \end{align}
Since  $ \lambda^-_\omega h \tomega(h) \leq \lambda^-_\omega c_r h^{3/2} \leq 1$,
the right hand side of (\ref{asyrecpf325}) is bounded by 
\begin{align} \label{asyrecpf35} \frac{h}{\tau} R^2 + \frac{\tau-h}{\tau} R^2+ ( \lambda_\omega^- \tomega(R^2)+1) + \frac{h}{\tau} \left( 3 \lambda_\omega^- \tomega(R^2) + 1 + 3 \right) \leq r . 
\end{align}
Thus, we may apply $\tf_h^{(2(m-1))}$ to both sides, using Lemma \ref{monotonicitylem} and the convexity of $f_h$,
\begin{align} \label{asyrecpf4}
 \tf^{(2m)}_h(W_{n,m})  &\leq \frac{h}{\tau} \tf_h^{(2(m-1))}(W_{n-1,m-1})  + \frac{\tau - h}{\tau} \tf_h^{(2(m-1))}(W_{n,m-1}) \nonumber \\
 &\quad+ \left(\lambda_\omega^- \tomega(R^2)+1 \right) \lambda_\omega^- h \tomega(h) + \frac{h}{\tau} D_{n,m}
+ 2(m-1) \lambda_\omega^- h \tomega((\lambda_\omega^-)^2 c_{r}^2 h^2) .
\end{align}
Finally, we may bound the second line of inequality (\ref{asyrecpf4}) by
\begin{align} \label{asyrecpf5}
\bar{C} \left[ h \tomega(\sqrt{\tau}) +  h^2 + \tomega(h^2) \right] + 2h (E(\mu^{m-1}) - E(\mu^m)),
\end{align}
with 
\begin{align}\label{bar C1 def}
\bar{C} := \lambda_\omega^- \max\{  \lambda_\omega^- \tomega(R^2)+ 4R^2 , 3 \lambda_\omega^- c_r^2, 2T((\lambda_\omega^-)^2 c_r^2 +1)\} +R.
\end{align}
(While the addition of $R$ is not needed to bound (\ref{asyrecpf4}), in what follows, it is convenient that $\bar{C} \geq R$.)

It remains to prove the claim. By Proposition \ref{discreteEVI},
\begin{align}
\tf_h(W_{2,\bmu^{m-1}}^2(\mu^m, \mu^n))-W_2^2(\mu^{m-1},\mu^n)  &\leq 2h \left( E(\mu^n)- E(\mu^m) \right)   , \label{transpbd1} \\
\tf_\tau( W_{2,\bmu^{n-1}}^2(\mu^n, \mu^{m-1}))-W_2^2(\mu^{n-1},\mu^{m-1})  &\leq 2 \tau \left( E(\mu^{m-1})- E(\mu^n) \right) . \label{asympf2}
\end{align} 
Multiplying (\ref{transpbd1}) by $\tau$, (\ref{asympf2}) by $h$, and adding them together gives 
\begin{align}\label{mixedfimestepsvarineq} 
&\tau \tf_h( W_{2,\bmu^{m-1}}^2(\mu^m, \mu^n)) + h \tf_\tau(W_{2, \bmu^{n-1}}^2(\mu^n, \mu^{m-1}))\nonumber \\
&\quad \leq \tau W_2^2(\mu^{m-1},\mu^n) + h W_2^2(\mu^{n-1},\mu^{m-1}) + 2 \tau h \left(E(\mu^{m-1}) - E(\mu^m) \right) 
\end{align}
Since $\tomega$ is a nondecreasing, concave modulus of continuity for $\tomega$, the reverse triangle inequality, Remark \ref{transport metric bounds W2},  and $h \leq \tau$ give
\begin{align*}
&\tomega(W_{2, \bmu^{n-1}}^2(\mu^n,\mu^{m-1})) - \tomega (W_2^2(\mu^n,\mu^m))  \\
&\leq\tomega(W_{2, \bmu^{n-1}}^2(\mu^n,\mu^{m-1})) - \tomega(W_{2,\bmu^{n-1}}^2(\mu^{n-1},\mu^{m-1}))\\
&\quad  
+\tomega(W_2^2(\mu^{n-1},\mu^{m-1})) - \tomega(W_2^2(\mu^{n}, \mu^{m-1})) + \tomega(W_2^2(\mu^{n}, \mu^{m-1}))- \tomega (W_2^2(\mu^n,\mu^m)) \\
&\leq \tomega\left(W_2(\mu^{n-1},\mu^n) [W_{2,\bmu^{n-1}}(\mu^n,\mu^{m-1}) + W_2(\mu^{n-1},\mu^{m-1})]\right) \\
&\quad 
+\tomega\left(W_2(\mu^{n-1},\mu^n) [W_{2}(\mu^{n-1},\mu^{m-1}) + W_2(\mu^{n},\mu^{m-1})]\right)  +\tomega\left(W_2(\mu^{m-1},\mu^m) [W_{2}(\mu^{n},\mu^{m-1}) + W_2(\mu^{n},\mu^{m})]\right) \\
&\leq 2\tomega(2R W_2(\mu^{n-1},\mu^n)) +\tomega(2R W_2(\mu^{m-1},\mu^m))
\end{align*}
Multiplying both sides of this inequality by $\lambda_\omega^{-} \tau h$, and adding to (\ref{mixedfimestepsvarineq}) gives
\begin{align} \label{transpbd3}
&\tau \tf_h(W_{2,\bmu^{m-1}}^2(\mu^m, \mu^n)) + h  W_{2, \bmu^{n-1}}^2(\mu^n, \mu^{m-1}) \nonumber \\
& \leq \tau W_2^2(\mu^{m-1},\mu^n) +h  W_2^2(\mu^{n-1},\mu^{m-1}) + \lambda_\omega^- \tau h \tomega (W_2^2(\mu^n,\mu^m)) + 2 \tau h \left(E(\mu^{m-1}) - E(\mu^m) \right)\nonumber \\
&\quad +  \lambda_\omega^- \tau h\left (2 \tomega(R^2W_2(\mu^{n-1},\mu^n)) +\tomega(R^2W_2(\mu^{m-1},\mu^m) \right) .
\end{align}
By Remark \ref{transport metric bounds W2}, $W_{2}^2(\mu^m, \mu^n) \leq W_{2,\bmu^{m-1}}^2(\mu^m, \mu^n)$. Thus, applying Lemma \ref{monotonicitylem},
\[ \tau \tf_h(W_{2}^2(\mu^m, \mu^n))  \leq \tau \tf_h(W_{2,\bmu^{m-1}}^2(\mu^m, \mu^n)) + (\lambda_\omega^-)^2 c_R^2 h^2 \tau  \leq \tau \tf_h(W_{2,\bmu^{m-1}}^2(\mu^m, \mu^n)) +3 (\lambda_\omega^-)^2 c_r^2 h^2 \tau. \]
Substituting this into (\ref{transpbd3}), rearranging, and dividing by $h$ proves the claim.

\end{proof}

Next, we use this asymmetric recursive inequality, arguing by induction, to prove a quantitative bound on the Wasserstein distance between two discrete gradient flow sequences with the same initial conditions and different  time steps $\tau$ and $h$. This is analogous to a previous result by the author in the semiconvex case \cite[Theorem 2.6]{Craig}.
 
\begin{theorem}[distance between discrete gradient flows] \label{W2RasBound} 
Suppose $E$ satisfies assumption \ref{main assumptions}. Then for all $T>0$ and $\mu \in D(E)$, there exists $\bar{\tau}>0$ and $\bar{C} \geq 0$, depending on $\tomega, \lambda_\omega^-, T,$ and $\tau_*$, so that for $0 \leq h \leq  \tau<   \bar{\tau}$ and $m,n \in \mathbb{N}$ with $mh, n \tau \leq T$,
\begin{align*}
&\tf^{(2m)}_h (W_2^2(\mu^n_\tau, \mu^m_h))  \\
 &\quad \leq \bar{C}\left[\sqrt{(n\tau - mh)^2+\tau^2n}  + h m\tomega(\sqrt{\tau}) +h^2m + \tomega(h^2)m \right] +2h (E(\mu) -E(\mu_h^m)) .
\end{align*}
\end{theorem}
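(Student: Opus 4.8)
The plan is to run the Crandall--Liggett double-induction argument, exactly parallel to the semiconvex case in \cite[Theorem 2.6]{Craig}, using the asymmetric recursive inequality of Theorem \ref{recineqthm} as the only analytic input. Fix $T$, $\mu$, and time steps $0 \le h \le \tau < \bar\tau$, with $\bar\tau$ and $\bar C$ as provided by Theorem \ref{recineqthm}. Abbreviate $\gamma_{n,m} := \tf_h^{(2m)}(W_2^2(\mu^n_\tau, \mu^m_h))$ and $\delta := \bar C\bigl(h\tomega(\sqrt\tau) + h^2 + \tomega(h^2)\bigr)$, so that, for $n,m \ge 1$ with $n\tau, mh \le T$, Theorem \ref{recineqthm} reads
\[ \gamma_{n,m} \le \frac{h}{\tau}\,\gamma_{n-1,m-1} + \frac{\tau-h}{\tau}\,\gamma_{n,m-1} + \delta + 2h\bigl(E(\mu^{m-1}_h) - E(\mu^m_h)\bigr). \]

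First I would eliminate the telescoping energy term and the accumulating error by passing to the shifted quantity $d_{n,m} := \gamma_{n,m} - m\delta - 2h\bigl(E(\mu) - E(\mu^m_h)\bigr)$. Because the coefficients $h/\tau$ and $(\tau-h)/\tau$ sum to one and the energy is nonincreasing along the discrete gradient flow, substituting this into the displayed recursion and simplifying collapses it to the homogeneous inequality $d_{n,m} \le \frac{h}{\tau} d_{n-1,m-1} + \frac{\tau-h}{\tau} d_{n,m-1}$ for $n,m \ge 1$.

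Next I would produce an explicit super-solution of this homogeneous recursion. Setting $g(n,m) := (n\tau - mh)^2 + \tau^2 n$, a direct computation — in which the cross terms in $u := n\tau - mh$ cancel — gives the identity $\frac{h}{\tau} g(n-1,m-1) + \frac{\tau-h}{\tau} g(n,m-1) = g(n,m) - h^2 \le g(n,m)$. Since $x \mapsto \sqrt x$ is concave, Jensen's inequality with weights $h/\tau$ and $(\tau-h)/\tau$ then shows that $\phi(n,m) := \sqrt{g(n,m)} = \sqrt{(n\tau - mh)^2 + \tau^2 n}$ satisfies $\frac{h}{\tau}\phi(n-1,m-1) + \frac{\tau-h}{\tau}\phi(n,m-1) \le \phi(n,m)$. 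The claim is then that $d_{n,m} \le \bar C\,\phi(n,m)$ for all admissible $n,m$, which I prove by induction on $m$, with $m=0$ as the base of the induction and $n=0$ handled separately within each step. For $m=0$ one has $\gamma_{n,0} = W_2^2(\mu^n_\tau,\mu)$ directly, and for $n=0$, using $\tf_h(x) \le x$ together with the monotonicity of $\tf_h$, $\gamma_{0,m} \le W_2^2(\mu,\mu^m_h)$; in both cases the a priori bound $(\ref{base case})$ controls the right side by $2n\tau\,C(\mu,T)$, respectively $2mh\,C(\mu,T)$, and since $\phi(n,0) \ge n\tau$, $\phi(0,m) = mh$, and the two subtracted terms defining $d_{n,m}$ are nonnegative, these base cases hold once $\bar C$ is enlarged so that $\bar C \ge 2C(\mu,T)$ (as in Theorem \ref{recineqthm}, $\bar C$ may depend on $C(\mu,T)$). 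For $n,m \ge 1$, the homogeneous recursion, the inductive hypothesis at $(n-1,m-1)$ and $(n,m-1)$, and the super-solution property of $\phi$ give $d_{n,m} \le \bar C\phi(n,m)$ at once. Unwinding the definition of $d_{n,m}$, together with $m\delta = \bar C\bigl(hm\tomega(\sqrt\tau) + h^2 m + \tomega(h^2)m\bigr)$, yields precisely the asserted estimate.

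All of this is bookkeeping once Theorem \ref{recineqthm} is available, so I do not expect a genuine obstacle; the points that require care are verifying that the substitution really does collapse the recursion (the energy contributions must telescope and the $\delta$-terms must accumulate linearly in $m$ with coefficient sum one), the algebraic super-solution identity for $g$, and fixing $\bar\tau$ and a single enlarged $\bar C$ so that all the base cases and the recursion are simultaneously valid on the whole range $n\tau, mh \le T$.
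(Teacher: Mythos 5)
Your proposal is correct and follows essentially the same route as the paper: the shift to $d_{n,m}$ merely makes explicit the telescoping of the energy terms and the linear accumulation of the error that the paper's induction carries inside its inductive hypothesis, and your super-solution identity for $g(n,m)$ combined with concavity of $\sqrt{\cdot}$ is exactly the paper's identity (\ref{pre exp form1}) plus Jensen, applied in the same double-induction scheme with the same base cases from (\ref{base case}). One cosmetic point: in the $n=0$ base case you do not need (and do not in general have) monotonicity of $\tf_h$; iterating $\tf_h(x)\le x$, together with $\tf_h\equiv 0$ on negative arguments, already yields $\tf_h^{(2m)}(y)\le y$ for $y\ge 0$.
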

\begin{remark}
If $|\partial E|(\mu^j) \leq C_\mu$ for all $1 \leq j \leq n$,  a similar argument gives an improved estimate of $\bar{C} \left[(n\tau - mh)^2+  \tau^2 n  + h m\tomega(\tau) +h^2m + \tomega(h^2)m \right]$,  where $\bar{\tau}$ and $\bar{C}$ also depend on $C_\mu$. (This is analogous to \cite[Theorem 2.6]{Craig}.)
\end{remark}

\begin{proof}
Let $\bar{\tau}$ be as in equation (\ref{taubardef}) and $\bar{C}$ be as in (\ref{bar C1 def}).
We proceed by induction. The base case, when $n =0$ or $m =0$, follows from the fact that $\tf_h(x) \leq x$ and inequality (\ref{base case}).
We assume the inequality holds for
$(n-1,m)$ and $(n,m)$ and show that this implies it holds for $(n, m+1)$. 
By Theorem \ref{recineqthm},
\begin{align*}
 \tf^{(2(m+1))}_h(W_2^2(\mu^n_\tau, \mu^{m+1}_h)) &\leq \frac{h}{\tau} \tf_h^{(2m)}(W_2^2(\mu^{n-1}_\tau, \mu^{m}_h))  + \frac{\tau - h}{\tau} \tf_h^{(2m)}(W_2^2(\mu^n,\mu^{m})) \\
&\quad + \bar{C} \left[ h \tomega(\sqrt{\tau}) +  h^2 + \tomega(h^2) \right]+ 2h (E(\mu^{m}) - E(\mu^{m+1})) .
\end{align*}
Applying the inductive hypothesis,
\begin{align} \label{pre exp form2}
\tf^{(2(m+1))}_h (W_2^2(\mu^n_\tau, \mu^{m+1}_h))  &\leq   \frac{h}{\tau} \bar{C}  \sqrt{((n-1)\tau - mh)^2 +\tau^2  (n-1)}  + \frac{\tau-h}{\tau} \bar{C}  \sqrt{ (n\tau - mh)^2+ \tau^2  n}  \nonumber \\ 
&\quad + \bar{C} \left[ h m \tomega(\sqrt{\tau}) + h^2m+ \tomega(h^2)m \right]+ 2h (E(\mu) - E(\mu^m)) \nonumber \\
&\quad + \bar{C} \left[ h \tomega( \sqrt{ \tau})+ h^2 + \tomega(h^2) \right] + 2h (E(\mu^{m}) - E(\mu^{m+1})).
\end{align}
To control the first term, note that 
\begin{align} \label{pre exp form1}
& ((n-1)\tau - mh)^2 + \tau^2  (n-1) =  (n\tau - mh)^2 + \tau^2 n  - 2\tau(n \tau - mh)   .
\end{align}
Combining this identity with the concavity of $\sqrt{\cdot}$, we may bound the first line in inequality (\ref{pre exp form2}) by
\[ \bar{C}\sqrt{ (n \tau - mh)^2 +\tau^2 n - 2h(n \tau -mh) } \leq \bar{C}\sqrt{ (n \tau - (m+1)h)^2 +\tau^2 n } .\]
Therefore,
\begin{align*}
&\tf^{(2(m+1))}_h (W_2^2(\mu^n_\tau, \mu^{m+1}_h))  \\
&\leq  \bar{C} \left[ \sqrt{ (n \tau - (m+1)h)^2 +\tau^2 n }  + h (m+1) \tomega(\sqrt{\tau}) + h^2(m+1)+ \tomega(h^2)(m+1) \right] + 2h (E(\mu) - E(\mu^{m+1})),
\end{align*}
which gives the result.
\end{proof}

Taking $\tau = t/n$ and $h = t/m$ in the previous theorem, we conclude that the discrete gradient flow sequence $\mu^n_{t/n}$ is Cauchy and quantify its rate of convergence.

\begin{theorem}[convergence of discrete gradient flow] \label{expform}
Suppose $E$ satisfies assumption \ref{main assumptions}. Then, for any $t\geq 0$ and $\mu \in D(E)$, $\mu^n_{t/n}$ converges to a limit $\mu(t)$ as $n \to +\infty$. Furthermore, for any $T>0$ there exist positive constants $\bar{N}$ and $\bar{C}$, depending on $\tomega, \lambda_\omega^-, T,$ and $\tau_*$, so that
\begin{align} \label{exp form eqn2}
 \tF_{2t} \left(W_2^2(\mu^n_{t/n}, \mu(t)) \right) \leq \bar{C} \left[t/\sqrt{n}+t \tomega(\sqrt{t/n})\right] \text{ for all } 0 \leq t \leq T \text{ and } n > \bar{N} .
 \end{align}
\end{theorem}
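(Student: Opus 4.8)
The plan is to apply Theorem~\ref{W2RasBound} with $\tau = t/n$ and $h = t/m$ (so that $n\tau = mh = t$), and then to let $m \to \infty$ with $n$ fixed. With these choices one has $(n\tau - mh)^2 = 0$ and $\tau^2 n = t^2/n$, while $hm\,\tomega(\sqrt{\tau}) = t\,\tomega(\sqrt{t/n})$, $h^2m = t^2/m$, $\tomega(h^2)m = m\,\tomega(t^2/m^2)$, and, by \eqref{tau mu def} together with the monotonicity of $C(\mu,\cdot)$, $2h(E(\mu) - E(\mu_h^m)) \leq (2t/m)\,C(\mu,T)$. Writing $a_{n,m} := W_2^2(\mu^n_{t/n},\mu^m_{t/m})$, Theorem~\ref{W2RasBound} then reads, for $n \leq m$ with $t/n < \bar\tau$,
\begin{align} \label{proposal main ineq}
\tf^{(2m)}_{t/m}(a_{n,m}) \;\leq\; \bar C\Bigl[\tfrac{t}{\sqrt n} + t\,\tomega(\sqrt{t/n})\Bigr] \;+\; \bar C\Bigl[\tfrac{t^2}{m} + m\,\tomega\bigl(\tfrac{t^2}{m^2}\bigr)\Bigr] \;+\; \tfrac{2t}{m}\,C(\mu,T) \;=:\; \varepsilon_{n,m}.
\end{align}

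I would first prove convergence. Because $\tomega(x) = o(\sqrt x)$ as $x \to 0$, the term $m\,\tomega(t^2/m^2) = t\cdot\tomega(t^2/m^2)/\sqrt{t^2/m^2} \to 0$ as $m \to \infty$, and the remaining $m$-dependent terms in $\varepsilon_{n,m}$ obviously vanish as well, so $\varepsilon_{n,m} \to 0$ as $n,m \to \infty$. Also, by \eqref{base case} and the triangle inequality, $a_{n,m} \leq 8T\,C(\mu,T)$ uniformly in $n,m$. Applying Proposition~\ref{odeproposition} with $\omega$ replaced by $\tomega$ and $\lambda_\omega$ by $-\lambda_\omega^- \leq 0$---legitimate, since $\tomega$ is its own Osgood modulus by \eqref{tomega own modulus} and $\tomega(x) = o(\sqrt x)$---shows that for $m$ large, $\tf^{(2m)}_{t/m} \to \tF_{2t}$ uniformly on $[0,8T\,C(\mu,T)]$; note that $2m$ steps of size $t/m$ corresponds to total time $2t$, which is why $\tF_{2t}$ appears. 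Hence $\tF_{2t}(a_{n,m}) \leq \tf^{(2m)}_{t/m}(a_{n,m}) + o(1) \leq \varepsilon_{n,m} + o(1) \to 0$, and since $\tF_{2t}$ is continuous and strictly increasing with $\tF_{2t}(0)=0$ while $a_{n,m}$ stays in a fixed bounded set, this forces $a_{n,m} \to 0$. Thus $(\mu^n_{t/n})_n$ is Cauchy in the complete space $(\P_2(\Rd),W_2)$ and converges to a limit $\mu(t)$.

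For the quantitative estimate \eqref{exp form eqn2}, I fix $n > \bar N := \max\{1, T/\bar\tau\}$ and pass to the limit $m \to \infty$ (along $m \geq n$) in \eqref{proposal main ineq}. On the right-hand side the three $m$-dependent terms vanish, leaving $\bar C[\,t/\sqrt n + t\,\tomega(\sqrt{t/n})\,]$. On the left, continuity of $W_2$ gives $a_{n,m} \to W_2^2(\mu^n_{t/n},\mu(t))$, and combining this with the uniform convergence $\tf^{(2m)}_{t/m} \to \tF_{2t}$ on the bounded set containing all the $a_{n,m}$ and the continuity of $\tF_{2t}$ yields $\tf^{(2m)}_{t/m}(a_{n,m}) \to \tF_{2t}(W_2^2(\mu^n_{t/n},\mu(t)))$. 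Passing to the limit gives exactly \eqref{exp form eqn2}. (When $\lambda_\omega \geq 0$ one has $\lambda_\omega^- = 0$, so $\tf_h = \id$ and $\tF_{2t} = \id$, and the argument degenerates to $W_2^2(\mu^n_{t/n},\mu(t)) \leq \bar C[\,t/\sqrt n + t\,\tomega(\sqrt{t/n})\,]$; the case $t=0$ is trivial.)

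The main obstacle is passing to the limit through the composite, nonlinear, and only approximately monotone maps $\tf^{(2m)}_{t/m}$. Two ingredients make this work: the uniform Euler-scheme error estimate $\tf^{(2m)}_{t/m} - \tF_{2t} \to 0$ from Proposition~\ref{odeproposition} (together with the a priori bound keeping $a_{n,m}$ in a fixed compact set, so the estimate is uniform), and---most importantly---the fact that the genuinely new error term produced by mismatched time steps, $m\,\tomega(t^2/m^2)$, tends to zero, which is precisely where the hypothesis $\tomega(x) = o(\sqrt x)$ enters. Without this decay the discrete-to-continuous error would not close, consistent with the remark in the introduction that quantitative convergence rates are not expected without it.
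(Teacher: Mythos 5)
Your proposal is correct and follows essentially the same route as the paper: apply Theorem \ref{W2RasBound} with $\tau = t/n$, $h = t/m$, invoke Proposition \ref{odeproposition} (with $\tomega$ and $-\lambda_\omega^-$) to pass from $\tf^{(2m)}_{t/m}$ to $\tF_{2t}$, deduce the Cauchy property and convergence, and then send $m \to +\infty$, using $\tomega(x) = o(\sqrt{x})$ to kill the $m\,\tomega(t^2/m^2)$ term. Your treatment of the limit passage through the nonlinear maps is in fact slightly more explicit than the paper's, but the substance is identical.
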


\begin{remark} \label{improved rate}
If $|\partial E|(\mu^j) \leq C_\mu$ for all $1 \leq j \leq n$,  a similar argument gives an improved estimate of $\bar{C}\left[ t^2/n + t\tomega(t/n)  \right]$, where $\bar{N}$ and $\bar{C}$ also depend on $C_\mu$. (This is analogous to \cite[Theorem 2.7]{Craig}.)
\end{remark}

\begin{example}[rate of convergence] \label{rate convergence example}
For the three moduli of convexity from example \ref{omega examples}, we obtain the following rates of convergence:
\begin{enumerate}[(i)]
\item Lipschitz and polynomial modulus of convexity:  $W_2(\mu^n_{t/n}, \mu(t)) =O(n^{-1/4})$; \label{lip rate}
 \item  log-Lipschitz modulus of convexity: \label{log lip rate} $W_2(\mu^n_{t/n},\mu(t)) = \mathcal{O} \left( \left[  n^{-1/2} \log(n) \right]^{1/2e^{2\lambda_\omega^- t}}  \right)$.
\end{enumerate}
\end{example}

\begin{proof}
Let $\bar{\tau}$ be as in equation (\ref{taubardef}) and $\bar{C}$ be as in (\ref{bar C1 def}).
By Theorem \ref{W2RasBound}, for $m \geq n > t/\bar{\tau}$, and inequality (\ref{tau mu def}),
\begin{align*} 
&\tf^{(2m)}_h (W_2^2(\mu^n_\tau, \mu^m_h))  \leq \bar{C}  \left[t/\sqrt{n}  + t \tomega(\sqrt{t/n}) +t^2/m + \tomega(t^2/m^2)m \right]+2(t/m) C(\mu,t) .
\end{align*}
Therefore, for $R := 2 \sqrt{2t C(\mu,t)} \geq W_2(\mu^n_{t/n}, \mu^m_{t/m})$,
Proposition \ref{odeproposition} ensures
\begin{align} &\tF_{2t}(W_2^2(\mu^n_{t/n}, \mu^m_{t/m})) \nonumber \\
&\quad\leq \bar{C} \left[t/\sqrt{n}  + t \tomega(\sqrt{t/n}) +t^2/m + \tomega(t^2/m^2)m \right]
 +R^2(t/m) +  \tF_{\lambda_\omega^- 2t}\left( \frac{\lambda_\omega^- 2t  \tomega(R^2)}{m}  \right) . \label{exp form1}
 \end{align}
Consequently, the sequence $\mu^n_{t/n}$ is Cauchy. Since the Wasserstein metric is complete \cite[Proposition 7.1.5]{AGS}, the limit $\mu(t)$ exists. Sending $m \to +\infty$ in the above inequality and using that $\tomega(x) = o(\sqrt{x})$ as $x \to 0$ gives the result.
\end{proof}

\subsection{Properties of gradient flow} \label{properties section}
In this section, we conclude our proof of well-posedness of the gradient flow for $\omega$-convex functions by showing that the limit of any discrete gradient flow sequence is the unique solution of the gradient flow. We also use our $n$-step contraction inequality for the discrete gradient flow (Theorem \ref{iteratedcocor}) to prove a  contraction inequality for the gradient flow.

\begin{theorem} \label{propertiesoflimit}
Suppose $E$ satisfies assumption \ref{main assumptions}. Then for all $t \in [0, +\infty)$, the function 
\[ S(t): D(E) \to D(E): \mu \mapsto \mu(t) \]
 is a strongly continuous semigroup. In particular, for any $\mu,\nu \in D(E)$,
\begin{enumerate}[(i)]
\item $\lim_{t \to 0} S(t) \mu = S(0) \mu = \mu$; \label{ctyitem}
\item$S(t + s) = S(t) S(s) \mu$ for $t, s \geq 0$; \label{semigroupitem}
\item $F_{2t}(W_2^2(S(t) \mu, S(t) \nu)) \leq W_2^2(\mu,\nu)$;
\label{contractionitem}
\item \label{grad flow item} $S(t)\mu$ is the unique gradient flow of $E$, in the sense of Definition \ref{gradflowdef}.
\end{enumerate}
\end{theorem}
\begin{remark}[curve of maximal slope]
In addition to the fact that $S(t)\mu$ is the gradient flow of $E$, in the sense of Definition \ref{gradflowdef}, one can also show that $S(t) \mu$ is a curve of maximal slope of $E$ with respect to $|\partial E|$, in the sense of \cite[Definition 1.3.2]{AGS}. This is a consequence of \cite[Theorem 2.3.3]{AGS}, since $|\partial E|$ is both lower semicontinuous (Proposition \ref{slope lsc}) and is a strong upper gradient for $E$ (equation (\ref{HWI slope formula}) and \cite[Corollary 2.4.10]{AGS}).
 \end{remark}

\begin{example}[rate of contraction]	\label{rate of contraction example}
For the three moduli of convexity from Example \ref{omega examples}, we obtain the following rates of contraction in Theorem \ref{propertiesoflimit} (\ref{contractionitem}):
\begin{enumerate}[(i)]
\item Lipschitz modulus of convexity:  $W_2(S(t)\mu,S(t)\nu) \leq e^{-\lambda_\omega t} W_2(\mu,\nu)$ for all $t \geq 0$; \label{lip contr}
 \item polynomial modulus of convexity: $W_2(S(t)\mu,S(t)\nu) \leq W_2(\mu,\nu)(1+2 \lambda_\omega pt W_2^{2p}(\mu,\nu))^{-1/2p}$ \\
 for $0 \leq W_2(\mu,\nu) \leq 1$ and $t < (W_2(\mu,\nu)^{-2p}-1) /2\lambda_\omega^- p$; \label{poly contr}
\item log-Lipschitz modulus of convexity: \label{log lip contr} $W_2(S(t) \mu, S(t)\nu) \leq W_2(\mu,\nu)^{1/e^{-2 \lambda_\omega t}}$ \\
for $0 \leq W_2(\mu,\nu) \leq e^{-1-\sqrt{2}}$ and $t < \log \left( \log(W^2_2(\mu,\nu))/(-1-\sqrt{2}) \right)/2\lambda_\omega^-$.
\end{enumerate}
\end{example}

\begin{proof}
First, note that by \cite[Corollary 3.3.4]{AGS}, $S(t)\mu$ is locally absolutely continuous in $t$. 
When $\lambda_\omega \leq 0$, (\ref{contractionitem}) is an immediate consequence of Corollary \ref{iteratedcocor}. If $E$ is $\omega$-convex with constant $\lambda_\omega >0$, it is also $\tomega$-convex with constant $\lambda_{\tomega}=0$, so (\ref{contractionitem}) provides a coarse stability estimate for all $\lambda_\omega \in \R$. We will show the improved estimate for $\lambda_\omega >0$ at the end of this proof.

Now, we turn to (\ref{semigroupitem}). First, note that it suffices to show $S(t)^m \mu = S(mt) \mu$ for fixed $m \in \mathbb{N}$. In particular, if this estimate holds, then for any $l,k,r,s \in \mathbb{N}$,
\begin{align*}
 S\left(\frac{l}{k} + \frac{r}{s} \right) \mu &= S\left(\frac{ls + rk}{ks} \right) \mu = \left[S \left(\frac{1}{ks} \right) \right]^{ls + rk}  \mu  = \left[ S \left(\frac{1}{ks} \right) \right]^{ls} \left[ S \left(\frac{1}{ks} \right) \right]^{rk} \mu  = S\left(\frac{l}{k} \right) S\left( \frac{r}{s} \right) \mu   .
 \end{align*}
Using that $S(t) \mu$ is continuous in $t$, we obtain $S(t+s) \mu= S(t) S(s) \mu$ for all $t, s \geq 0$.

To show $S(t)^m \mu = S(mt) \mu$, we proceed by induction. The result holds for $m =1$, so suppose that $S(t)^{m-1} \mu = S((m-1)t) \mu$. With slight abuse of notation, we briefly use $J_\tau \mu$ to denote any element in this set. By the triangle inequality,
\begin{align} \label{semigroupeqn1}
W_2(S(mt) \mu, S(t)^m \mu) &\leq W_2(S(mt) \mu, (J^n_{t/n})^m \mu) + W_2( (J^n_{t/n})^{m} \mu, J^n_{t/n} S(t)^{m-1} \mu) \nonumber \\
&\quad + W_2( J^n_{t/n} S(t)^{m-1} \mu, S(t)^{m} \mu)
\end{align}
For the first and third terms, Theorem \ref{expform} ensures
\begin{align*}
W_2(S(mt) \mu, (J^n_{t/n})^m \mu) &=W_2( S(mt) \mu,J^{nm}_{tm/nm} \mu) \xrightarrow{n \to \infty} 0 , \\
W_2( J^n_{t/n} S(t)^{m-1} \mu, S(t)^{m} \mu) &= W_2( J^n_{t/n} S(t)^{m-1} \mu, S(t) S(t)^{m-1}  \mu) \xrightarrow{n\to \infty} 0 .
\end{align*}
We now consider the second term in equation (\ref{semigroupeqn1}). 
By the inductive hypothesis and Theorem \ref{expform},
\begin{align*}
 W_2((J^n_{t/n})^{m-1} \mu, S(t)^{m-1} \mu) = W_2(J^{n(m-1)}_{t(m-1)/n(m-1)} \mu, S((m-1)t) \mu) \xrightarrow{n \to \infty}0.
 \end{align*}
Therefore, by Corollary \ref{iteratedcocor},
 \begin{align*}
&\tF_{2t}(W_2^2(J^n_{t/n} (J^n_{t/n})^{m-1} \mu, J^n_{t/n} S(t)^{m-1} \mu)) \\
& \leq W^2_2((J^n_{t/n})^{m-1} \mu,S(t)^{m-1} \mu) + \lambda_\omega^- t \tomega \left(R^3 \sqrt{t/n} \right) + 2Rt/n + 5 \lambda_\omega^2 c_r^2 t^2/n + F_{2\lambda_\omega^- t} \left(\frac{2\lambda_\omega^- t \omega(R^2)}{n}  \right)  ,
\end{align*}
which goes to zero as $n \to +\infty$.
Consequently, we conclude that  $S(t)^m \mu = S(mt) \mu$ for all $m \in \mathbb{N}$.

We now prove part (\ref{grad flow item}). By part (\ref{semigroupitem}), it suffices to show that for all $\nu \in D(E)$ and $t>0$,
\begin{align} \label{grad flow item1}
W_2^2(S(t) \mu, \nu) -W_2^2(\mu,\nu) \leq \int_0^t 2(E(\nu) - E( S(s)\mu))-\lambda_\omega \omega( W_2^2(S(s) \mu, \nu)) ds .
\end{align}
Note that for all $i =1, \dots, n$ and $n$ sufficiently large,
\begin{align} \label{fatou bound} W^2_2(\mu^i_{t/n}, \nu) \leq \left(W_2(\mu,\nu) + \sqrt{2 t(E(\mu) - E(S(t)\mu) + 1)} \right)^2 =:R 
\end{align}
Consequently, by Remark \ref{transport metric bounds W2}, Lemma \ref{monotonicitylem} (\ref{ftau monotone}), and Proposition \ref{discreteEVI}, for $n> t c_R |\lambda_\omega|$,
\begin{align*}
W_2^2(\mu^i_{t/n}, \nu) - W_2^2(\mu^{i-1}_{t/n}, \nu) \leq 2 (t/n) (E(\nu) - E(\mu^i_{t/n})) - \lambda_\omega (t/n)\omega(W_2^2(\mu^i_{t/n},\nu)) + \lambda_\omega^2 c_R^2 ({t/n})^2 .
\end{align*}
Summing both sides from $i=1,\dots, n$,
\begin{align} \label{prefatou}
W_2^2(\mu^n_{t/n},\nu) - W_2^2(\mu,\nu) &\leq (t/n) \sum_{i=1}^n \left[2(E(\nu) - E(\mu^i_{t/n})) - \lambda_\omega \omega(W_2^2(\mu^i_{t/n},\nu)) +  \lambda_\omega^2 c_R^2 t/n \right] , \nonumber \\
&=  \int_0^{t} \psi_n(s) ds + \lambda_\omega^2 c_R^2 t^2/n .
\end{align}
where $\psi_n(s) := 2(E(\nu) - E(\mu^i_{t/n})) - \lambda_\omega \omega(W_2^2(\mu^i_{t/n},\nu)) $ for $s \in ((i-1){t/n}, i{t/n}]$, $1 \leq i \leq n$. By inequality (\ref{fatou bound}) and the fact that $E$ decreases along the discrete gradient flow, $\psi_n(s)$ is uniformly bounded above. As before, $\mu^i_{t/n}\xrightarrow{n \to +\infty} S(s) \mu$, hence by the lower semicontinuity of $E$,
\begin{align} \label{fatou} \limsup_{n \to \infty} \psi_n(s) \leq 2(E(\nu)-E(S(s)\mu)) - \lambda_\omega \omega(W_2^2(S(s)\mu,\nu)) .
\end{align} 
Therefore, sending $n \to +\infty$ in (\ref{prefatou}) gives inequality (\ref{grad flow item1}) by Fatou's lemma.

Finally, we prove (\ref{contractionitem}) when $\lambda_\omega \geq 0$. By part (\ref{grad flow item}),
\[ \frac{d}{dt} W_2^2(S(t) \mu, S(t) \nu) \leq -2 \lambda_\omega \omega \left(W_2^2(S(t)\mu, S(t)\nu) \right) . \]
The result then follows by Bihari's inequality \cite{Bihari}.

\end{proof}

\section{Application:  constrained nonlocal interaction and nonconvex drift diffusion} \label{applicationssection}
In this final section, we apply our previous results to gradient flows of \emph{constrained nonlocal interaction} energies. Our interest in such energies is inspired by (unconstrained) nonlocal interactions equations of the form
\[ \partial_t \rho = \grad \cdot ((\grad W *\rho) \rho) + D \Delta \rho^m , \quad W: \Rd \to \R,  \ D \geq 0, \ m \geq 1 . \]
These types of partial differential equations arise in a range of applications, including models of granular media (c.f. \cite{BenedettoCagliotiCarrilloPulvirenti, CarrilloMcCannVillani}) and biological swarming (c.f. \cite{TopazBertozziLewis, Blanchet, KellerSegel, BurgerFetecauHuang}), and can be thought of as the mean-field limit of a system of a large number of interacting agents \cite{CarrilloChoiHauray}. A guiding principle in the choice of interaction potential $W$ and the diffusion parameters $D, m$ is a desire for short-range repulsion, which prevents  agents from colliding, and long-range attraction, which confines agents to a bounded set, thereby allowing coherent structures to develop in the long-time limit.

Motivated by these (unconstrained) nonlocal interaction models, the author, Kim, and Yao consider a model of constrained nonlocal interaction, in which the interaction potential $W$ is purely attractive and the role of repulsion is played by a ``hard height constraint'' on the density \cite{CraigKimYao}. Informally, the model is given by
\[\begin{cases} \partial_t \rho = \grad \cdot (\grad (W*\rho) \rho) \text{ if }\rho < 1, \\
 \rho \leq 1 \text{ always.} \end{cases} \]
 Rigorously, the model is posed as the Wasserstein gradient flow of the constrained nonlocal interaction energy 
 \begin{align} \label{E infty first}
  \mathcal{W}_\infty(\mu) := \begin{cases} \frac{1}{2} \int \mu(x) W*\mu(x) dx &\text{if $\| \mu\|_\infty \leq 1$}, \\ +\infty & \text{otherwise.}  \end{cases}  \end{align}
 
 Similar models have been studied in Wasserstein gradient flow context by Maury, Roudneff-Chupin, Samtambrogio, and Venel \cite{MRS, MRSV} and M\'esz\'aros and Santabrogio \cite{MeszarosSantambrogio} when the nonlocal interaction term $W*\mu(x)$ is replaced by a local potential $V(x)$. In the case that $V \in C^2(\Rd)$ satisfies $D^2 V(x) \geq \lambda \rm{Id}_{d \times d}$ for all $x \in \Rd$---i.e. is semiconvex as a function on Euclidean space--- Alexander, Kim, and Yao showed that the corresponding Wasserstein gradient flows could be approximated by viscosity solutions of related partial differential equations. Specifically, they used Ambrosio, Gigli, and Savar\'e's quantitative estimates on the convergence of the discrete gradient flow for semiconvex energies to show that solutions of the porous medium equation with drift
   \[ \partial_t \rho = \grad \cdot (\grad V \rho) +  \Delta \rho^m , \quad m \geq 1 .    \]
converge as $m \to \infty$ to the gradient flow of the constrained potential energy
  \[ \mathcal{V}_\infty(\mu) := \begin{cases} \frac{1}{2} \int \mu(x) V(x) dx &\text{if $\| \mu\|_\infty \leq 1$}, \\ +\infty & \text{otherwise.}  \end{cases}     \]
  Using this connection between viscosity solutions and gradient flows, Alexander, Kim, and Yao were then able to characterize solutions of the gradient flow of $E_\infty$ with characteristic function initial data in terms of a Hele-Shaw type free boundary problem, thus providing a concrete PDE characterization of what had previously been  an abstract Wasserstein gradient flow.
  
A key challenge in adapting these results on constrained potential energies to constrained nonlocal interaction energies was the absence of quantitative estimates on the rate of convergence of the discrete gradient flow outside of the semiconvex case.
In particular, the author, Kim, and Yao were interested in the case when $W(x)$ was given by the Newtonian potential,
\begin{align} \label{newtonian potential} 
W(x) = \begin{cases}\frac{1}{2\pi}\log|x| & \text{ for }d=2,\\ \frac{1}{d(2-d)\alpha_d} |x|^{2-d} & \text{ for } d\geq 3,
\end{cases}
\end{align}
for which the corresponding constrained nonlocal interaction energy (\ref{E infty first}) fell outside the scope of the existing semiconvex theory.

 This obstacle inspired the present work. In spite of its lack of semiconvexity,  the constrained nonlocal interaction energy, with $W(x)$ given by (\ref{newtonian potential}), is $\omega$-convex for a log-Lipschitz modulus of convexity, and by Theorems \ref{expform} and \ref{propertiesoflimit} and Examples \ref{Newt att ex} (\ref{cong agg}) and \ref{rate convergence example}, the discrete gradient flow converges to the unique solution of the gradient flow with rate $\left(  n^{-1/2} \log(n) \right)^C$, where $C>0$ depends on the dimension and the time interval on which solutions are considered. With these results in hand, the author, Kim, and Yao were then able to give a PDE characterization of the congested aggregation model and study its long-time behavior.
 
 In this section, we show that our previous results on well-posedness of Wasserstein gradient flow and convergence of the discrete gradient flow apply to a range of constrained nonlocal interaction energies and nonconvex drift diffusion energies of the following forms:
\begin{align} \label{Wp def}
\mathcal{W}_p(\mu) &:= \begin{cases}
\frac{1}{2} \int \mu(x) W*\mu(x) dx &\text{if $\| \mu\|_p \leq C_p$}, \\ +\infty & \text{otherwise,}  \end{cases}       \\
\mathcal{V}_m(\mu) &:= \begin{cases}  \int_\Rd \mu(x)V(x) dx + \frac{1}{m-1} \int \mu^m(x)dx  &\text{ if } \mu \ll \mathcal{L}^d, \\
+ \infty &\text{ otherwise.}
\end{cases} \label{V def}
\end{align}
We allow $m = +\infty$ in the drift diffusion energy $\mathcal{V}_m$ by replacing the second term by 
\[ I_\infty(\mu) := \begin{cases} 0 &\text{ if } \|\mu\|_\infty \leq 1, \\ +\infty &\text{ otherwise.}\end{cases} \]

In particular, we consider $W, p, V,$ and $m$ that satisfy the following properties:
\begin{assumption}[constrained nonlocal interaction energy] \label{interaction assumption}
There exist $C, C'>0$ so that
\begin{enumerate}[(i)]
\item  $W^-*\mu(x) \leq C'$ for all $\mu \in D(\mathcal{W}_p)$. \label{bddbelow ass} 
\item For any $\mu, \nu \in \P_2(\Rd)$ with $\| \mu \|_p  \leq C_p$, $\|\grad W*\mu\|_{L^2(\nu)} \leq C'$. \label{subdiffbound}
\item For any $\mu$ with $\|\mu\|_p \leq C_p$, $W*\mu(x)$ is continuously differentiable, and there exists a continuous, nondecreasing, concave function $\psi: [0, +\infty) \to [0, +\infty)$ satisfying $\psi(0)=0$, $\psi(x) \geq x$, and $\int_0^1 \frac{dx}{\psi(x)} = +\infty$ so that 
\[ |\grad W*\mu(x) - \grad W*\mu(y)|^2 \leq C^2 \psi(|x-y|^2) .\] \label{potentialestimate}
\item $1< p \leq +\infty$. \label{p ass}
\item For any $\mu, \nu, \rho$ with $\|\mu\|_{p}, \|\nu \|_p, \|\rho\|_p \leq C_p$, we have $ \|\grad W*\mu - \grad W*\nu \|_{L^2(\rho)} \leq C W_2(\mu,\nu)$. \label{Loeperestimate}
\item $W: \Rd \to [-\infty, +\infty]$ is lower semicontinuous. \label{lsc ass}
\end{enumerate}
\end{assumption}

\begin{assumption}[drift diffusion energy] \label{potential assumption} There exist $C,C'>0$ so that
\begin{enumerate}[(i)]
\item $V(x)\geq -C'$.	\label{bddbelow ass2}
\item For any $\mu \in D(\mathcal{V}_m)$, $\|\grad V\|_{L^2(\mu)} < C'$.
\item $V(x)$ is continuously differentiable and there exists a continuous, nondecreasing, concave function $\psi: [0, +\infty) \to [0, +\infty)$ satisfying $\psi(0)=0$, $\psi(x) \geq x$, and $\int_0^1 \frac{dx}{\psi(x)} = +\infty$ so that 
\[ |\grad V(x) - \grad V(y)|^2 \leq 4C^2 \psi(|x-y|^2) \] \label{potentialestimate2}
\item $1< m \leq +\infty$ \label{p ass2}.
\end{enumerate}
\end{assumption}

These assumptions ensure the following properties of the corresponding constrained interaction and drift diffusion energies:
\begin{itemize}
\item Assumption (\ref{bddbelow ass}) ensures that the energies are bounded below.
\item Assumptions (\ref{bddbelow ass}) and (\ref{lsc ass}) for the interaction energy (resp. (\ref{bddbelow ass}) and (\ref{potentialestimate}) for the drift diffusion energy) ensure energies are lower semicontinuous.
\item Assumptions (\ref{subdiffbound}), (\ref{potentialestimate}), and (\ref{Loeperestimate}) for the interaction energy (resp. (\ref{subdiffbound}) and (\ref{potentialestimate}) for the drift diffusion energy) are used to prove an ``above the tangent line'' inequality for the energies, from which we deduce they are $\omega$-convex.
\item  Assumption (\ref{p ass}) allows us to show that there exists $\tau_* >0$ so that for all $\mu \in \P_{2}(\Rd)$ and $0 \leq \tau < \tau_*$, $J_\tau(\mu) \neq \emptyset$, so that the discrete gradient flow sequence exists.
\end{itemize}

In the case of interaction energies, assumption (\ref{potentialestimate}) is a generalization of a classical potential theory result originally proved by Yudovich  in the case that $W$ is the Newtonian potential, $\mu \in L^\infty(\Rd)$, and $\psi$ is a log-Lipschitz modulus of continuity \cite[Lemma 2.1]{Yudovich}. This type of continuity assumption has appeared in  previous works on gradient flows of nonlocal interaction energies with Newtonian repulsion or attraction by Ambrosio and Serfaty \cite{AmbrosioSerfaty} and Carrillo, Lisini, and Mainini  \cite{CarrilloLisiniMainini}.

Assumption (\ref{Loeperestimate}) is a generalization of a result originally proved by Loeper in the case of Newtonian interaction of bounded densities \cite[Theorem 2.7]{Loeper}. Such an assumption has also appeared in previous works on uniqueness of Wasserstein gradient flows of interaction energies \cite{CarrilloLisiniMainini, AmbrosioSerfaty, CarrilloRosado, BertozziLaurentRosado}.

\bigskip

The main result of this section is that energies with the above properties fall within the scope of the Wasserstein gradient flow theory for $\omega$-convex functions developed in sections \ref{section 2} and \ref{section 3}, so that their gradient flows are well-posed and their discrete gradient flows converge with explicit rate.
\begin{theorem} \label{constrained interaction theorem}
Suppose $\mathcal{W}_p$ and $\mathcal{V}_m$ are given by (\ref{Wp def}) and (\ref{V def}), with $W$, $p$, $V$, and $m$ satisfying assumptions \ref{interaction assumption} and \ref{potential assumption}. Then both energies satisfy assumption \ref{main assumptions} with  $ \omega(x)= \sqrt{x \psi(x)}$,  $\lambda_\omega = 4C$, and $\tau_* = +\infty$.
\end{theorem}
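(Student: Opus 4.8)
The plan is to verify, for both $\mathcal{W}_p$ and $\mathcal{V}_m$, the three clauses of Assumption~\ref{main assumptions}: properness and lower semicontinuity; $\omega$-convexity along generalized geodesics for $\omega(x) = \sqrt{x\psi(x)}$ with $\lambda_\omega = 4C$; and nonemptiness of $J_\tau\mu$ for all $\tau>0$ (i.e. $\tau_* = +\infty$). Before anything else, I would record that $\omega(x) = \sqrt{x\psi(x)}$ is indeed an Osgood modulus of convexity with $\tomega(x) = o(\sqrt x)$ as $x \to 0$: it is continuous, nondecreasing, vanishes only at $0$, and since $\psi(x)\geq x$ we have $\omega(x)\geq x$; one checks an Osgood modulus of continuity can be taken to be (a concave majorant of) $\sqrt{\psi}$-type, using $\psi(0)=0$ and $\int_0^1 dx/\psi(x) = +\infty$, and that $\omega(x)/\sqrt x = \sqrt{\psi(x)/x}\to $ (something controlled), with the decay $\tomega(x)=o(\sqrt x)$ following from $\psi(x)\to 0$. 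This is routine but should be stated carefully since everything downstream depends on it. Properness holds because, e.g., any smooth compactly supported density with the right normalization lies in the domain; lower semicontinuity follows from the listed bulleted consequences of Assumption~\ref{interaction assumption}(\ref{bddbelow ass}),(\ref{lsc ass}) (resp. (\ref{bddbelow ass}),(\ref{potentialestimate})) — the interaction term is lower semicontinuous under weak-$*$ convergence by Fatou/Portmanteau using $W^- *\mu$ bounded and $W$ lower semicontinuous, the entropy/constraint term $\frac{1}{m-1}\int\mu^m$ (or $I_\infty$) is lower semicontinuous by convexity, and the $L^p$ constraint set is weak-$*$ closed.

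The core of the proof is $\omega$-convexity, which I would establish via Proposition~\ref{omega convex sufficient}: fix a generalized geodesic $\mu_\alpha = \pi_\alpha\#\bnu$ from $\mu_0$ to $\mu_1$ with base $\nu$, with $\mu_0,\mu_1\in D(E)$, and show $\alpha\mapsto E(\mu_\alpha)$ is differentiable with
\[ E(\mu_1) - E(\mu_0) - \left.\tfrac{d}{d\alpha}E(\mu_\alpha)\right|_{\alpha=0} \geq \tfrac{\lambda_\omega}{2}\,\omega\!\left(W_{2,\bnu}^2(\mu_0,\mu_1)\right). \]
For $\mathcal{W}_p$, writing $T_\alpha = (1-\alpha)\pi_1 + \alpha\pi_2$ (pushed from the relevant plan), the interaction term is $\tfrac12\iint W(T_\alpha(x)-T_\alpha(y))\,d\gamma d\gamma$ where $\gamma$ is the joint law of the two displacements; differentiating in $\alpha$ and using Assumption~\ref{interaction assumption}(\ref{subdiffbound}) to justify differentiation under the integral gives a formula linear in $\grad W*\mu_\alpha$. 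The "above the tangent line" deficit then reduces, after one integration in $\alpha$, to controlling $\int_0^1\!\!\int \langle \grad W*\mu_\alpha - \grad W*\mu_0,\ \pi_2 - \pi_1\rangle\,d\bnu\,d\alpha$; bounding $\|\grad W*\mu_\alpha - \grad W*\mu_0\|$ by Assumption~\ref{interaction assumption}(\ref{potentialestimate}) pointwise (giving the $\sqrt{\psi}$ modulus in terms of transported distances, hence $W_{2,\bnu}$ after Jensen) and by Assumption~\ref{interaction assumption}(\ref{Loeperestimate}) (giving the linear-in-$W_2$ bound, needed to handle large displacements where $\psi(x)\asymp x$), and combining the two via $\omega(x) = \sqrt{x\psi(x)}$, yields the deficit bound with $\lambda_\omega = 4C$. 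The constraint term contributes nothing since $\|\mu_\alpha\|_p \le \max\{\|\mu_0\|_p,\|\mu_1\|_p\}$ along generalized geodesics (the $L^p$ norm is convex along generalized geodesics — a standard McCann-type displacement convexity fact), so $\mu_\alpha\in D(E)$ throughout and $\alpha\mapsto \frac{1}{m-1}\int\mu_\alpha^m$ (or $I_\infty(\mu_\alpha)$) is convex, so it only helps the inequality. For $\mathcal{V}_m$ the argument is the same but simpler: the drift term $\int V\,d\mu_\alpha$ is differentiated directly using Assumption~\ref{potential assumption}(\ref{bddbelow ass2})--(\ref{potentialestimate2}), and the diffusion term $\frac{1}{m-1}\int\mu_\alpha^m$ is displacement convex along generalized geodesics (McCann), again contributing a favorable sign.

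The remaining clause, $J_\tau\mu\neq\emptyset$ for every $\tau>0$, follows by the direct method: fix $\mu$, take a minimizing sequence $\nu_k$ for $\nu\mapsto \frac{1}{2\tau}W_2^2(\mu,\nu) + E(\nu)$; lower boundedness of $E$ (Assumption~\ref{interaction assumption}(\ref{bddbelow ass}), resp. \ref{potential assumption}(\ref{bddbelow ass2})) plus the $W_2^2$ penalty give a bound on $\int|x|^2d\nu_k$, hence tightness and a weak-$*$ limit $\nu_\infty$ with convergent second moments (so $W_2$-convergence); lower semicontinuity of $E$ and of $W_2^2(\mu,\cdot)$ finish it. The key point that makes $\tau_* = +\infty$ — rather than only small $\tau$ — is Assumption~\ref{interaction assumption}(\ref{p ass}) (resp. \ref{potential assumption}(\ref{p ass2})), i.e. $p>1$ (resp. $m>1$): because the constraint $\|\nu\|_p\le C_p$ with $p>1$ prevents mass concentration, the interaction energy $\frac12\int\nu\,W*\nu$ is bounded below on $D(\mathcal{W}_p)$ uniformly (not merely controlled by the quadratic penalty), so the functional is coercive and bounded below for \emph{all} $\tau>0$; I would make this quantitative using $W^-*\nu\le C'$ and the $L^p$ bound. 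I expect this last coercivity/compactness step to be the main technical obstacle — specifically, making sure the lower bound on the interaction (or diffusion) term is genuinely uniform over the constraint set so that no smallness of $\tau$ is needed, and carefully treating the $p=+\infty$ and $m=+\infty$ endpoint cases where the "diffusion" is the hard constraint $I_\infty$. The $\omega$-convexity computation, while longer, is essentially a bookkeeping exercise once the two potential-theoretic estimates (\ref{potentialestimate}) and (\ref{Loeperestimate}) are combined through the algebraic identity defining $\omega$.
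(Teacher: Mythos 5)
Your proposal follows essentially the same route as the paper's proof: you verify Assumption \ref{main assumptions} clause by clause, obtain $\omega$-convexity from Proposition \ref{omega convex sufficient} by differentiating $\alpha\mapsto E(\mu_\alpha)$ along generalized geodesics and bounding the tangent-line deficit by combining assumptions \ref{interaction assumption} (\ref{potentialestimate}) and (\ref{Loeperestimate}) through $\omega(x)=\sqrt{x\psi(x)}$ (this is exactly what Propositions \ref{Econvex} and \ref{Vconvex} do, with the constant $4C$ arising from the deficit bound $2C\,\omega$ and the factor $\lambda_\omega/2$), you handle the constraint and diffusion terms by convexity of $\mu\mapsto\|\mu\|_p^p$ along generalized geodesics so that $\mu_\alpha$ stays in the domain, and you prove lower semicontinuity as in Proposition \ref{lsc}. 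One cosmetic gloss: the paper verifies the Osgood property by showing $\sqrt{x\psi(x)}$ is itself concave (hence subadditive, hence its own modulus) and by using $\omega(x)\le\psi(x)$ (a consequence of $\psi(x)\ge x$) to get $\int_0^1 dx/\omega(x)=+\infty$; your ``concave majorant of $\sqrt\psi$-type'' remark should be replaced by this explicit check, but it is routine.

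Two statements in your existence clause need correction, though neither breaks the argument. First, tightness of a minimizing sequence (from the second-moment bound) gives a narrowly convergent subsequence but does \emph{not} give convergence of second moments, so you cannot claim $W_2$-convergence of $\nu_k$ to $\nu_\infty$; you also do not need it, since both $\nu\mapsto W_2^2(\mu,\nu)$ and $E$ are lower semicontinuous with respect to narrow convergence, which is all the direct method requires (the paper packages this via \cite[Corollary 2.2.2]{AGS}, checking relative weak-$*$ compactness of $W_2$-bounded sublevel sets). Second, the reason $\tau_*=+\infty$ is simply that $E$ is bounded below, which is assumption \ref{interaction assumption} (\ref{bddbelow ass}) (resp.\ assumption \ref{potential assumption} (\ref{bddbelow ass2})) and holds irrespective of $p$; what $p>1$ (resp.\ $m>1$) actually buys is not coercivity but compactness/closedness: it guarantees (via weak $L^p$ compactness, or equivalently weak-$*$ closedness of the set $\{\|\nu\|_p\le C_p\}$) that the narrow limit of the minimizing sequence is still absolutely continuous with $\|\nu_\infty\|_p\le C_p$, i.e.\ remains in $D(E)$, which is precisely where the paper invokes assumption (\ref{p ass}). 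With these adjustments your argument coincides with the paper's proof.
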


\subsection{Constrained interaction via Riesz and Newtonian potentials}
Before turning to the proof of Theorem \ref{constrained interaction theorem}, we first provide a few examples of $W, p$, $V$, and $m$ that satisfy assumptions \ref{interaction assumption} and \ref{potential assumption}. In particular, if $W(x)$ is a Newtonian or Riesz potential and $p$ is sufficiently large, depending on the singularity of $W(x)$, then  $\mathcal{W}_p$ is $\omega$-convex.
\begin{proposition} \label{Wexamples prop}
The following choices of $W$ and $p$ satisfy assumption \ref{interaction assumption}
\begin{enumerate}[(a)]
\item $W(x) = \pm c |x|^{\alpha-d}$ for $2 \leq \alpha < d$, $p \geq {d/(\alpha-2)}$, $c > 0$, and $d \geq 3$, \label{first example}
\item $W(x) = c \log (|x|)$ for $p= +\infty$, $c >0$, and $d = 2$, \label{second example}
\end{enumerate}
with $C, C'>0$ depending on $\alpha$, $d,$ and $p$, and
\begin{align} \label{psidef}
\psi(x) &:= \begin{cases} x (\log x)^2 & \text{ if } 0\leq x \leq e^{-1-\sqrt{2}}, \\ x+ 2(1+\sqrt{2})e^{-1-\sqrt{2}}  & \text{ if }x > e^{-1-\sqrt{2}} . \end{cases} 
\end{align}
Furthermore, if $V(x) = W* \rho(x)$, $m = p$, and $\rho \in L^p(\Rd)$, with $W$ and $p$ as above, then $V$ and $m$ satisfy assumption \ref{potential assumption} with $C, C'>0$ depending on $\alpha$, $d$, and $p$ and $\psi(x)$ as above.
\end{proposition}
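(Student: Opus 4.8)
The plan is to verify, one at a time, the six conditions of Assumption~\ref{interaction assumption} for $W(x) = \pm c|x|^{\alpha-d}$ (with $2 \le \alpha < d$, $d \ge 3$) and for $W(x) = c\log|x|$ (with $d = 2$), and then to read off the four conditions of Assumption~\ref{potential assumption} for $V = W \ast \rho$, $m = p$, essentially for free. The workhorse throughout will be a splitting of the vector kernel $\grad W = G_{\mathrm{near}} + G_{\mathrm{far}}$, where $G_{\mathrm{near}} = \chi \, \grad W$ with $\chi \in C^\infty_c(\Rd)$, $\chi \equiv 1$ on the unit ball, carries the singularity at $0$, and $G_{\mathrm{far}} = (1-\chi)\grad W$ is bounded with bounded derivative (using $|\grad W(x)| \lesssim |x|^{\alpha-d-1}$ and $|D^2 W(x)| \lesssim |x|^{\alpha-d-2}$). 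One controls the $G_{\mathrm{near}}$ contribution by H\"older's inequality against $\mu \in L^p$, using that $|\cdot|^\beta \in L^{p'}_{\mathrm{loc}}$ exactly when $\beta p' > -d$, and the $G_{\mathrm{far}}$ contribution trivially, since $\mu$ is a probability measure; the same splitting applied to $W$ itself handles the conditions involving $W$ rather than $\grad W$. With this device, conditions (i), (ii), (iv), (vi) are routine. For (vi), $W$ is continuous off the origin and tends to $+\infty$ (resp.\ $-\infty$) there, hence lower semicontinuous (set $W(0) = -\infty$ in the repulsive-sign case). For (i), the case $W \ge 0$ is immediate, and when $W = -c|x|^{\alpha-d}$ (or the logarithmic kernel) one splits $W^-\ast\mu(x)$ at $|x-y| = 1$: the inner part is $\le \|\mu\|_p \, \||\cdot|^{\alpha-d}\|_{L^{p'}(B_1)} < \infty$, which needs $p > d/\alpha$, and the outer part is $\le \int\mu = 1$. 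For (ii), the same splitting of $\grad W$ gives $\grad W\ast\mu \in L^\infty$ with a bound depending only on $\alpha, d, p$ (the inner part needs $|\cdot|^{\alpha-d-1}\in L^{p'}(B_1)$, i.e.\ $p > d/(\alpha-1)$), whence $\|\grad W\ast\mu\|_{L^2(\nu)} \le \|\grad W\ast\mu\|_\infty$; both lower bounds on $p$ follow from $p \ge d/(\alpha-2)$ (and in the Newtonian/logarithmic case $p = +\infty$). Condition (iv) holds since $1 < d/(\alpha-2) \le p \le +\infty$, and the continuous differentiability of $W\ast\mu$ asserted in (iii) follows from $\grad W \in L^{p'}_{\mathrm{loc}}$ and continuity of translation in $L^{p'}$.

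The first substantive step is the Yudovich-type estimate (iii), $|\grad W\ast\mu(x) - \grad W\ast\mu(y)|^2 \le C^2 \psi(|x-y|^2)$ with $\psi$ as in (\ref{psidef}); since $\psi(h^2) = 4h^2(\log h)^2$ for $h$ small, this is exactly the assertion that $\grad W\ast\mu$ is log-Lipschitz. Using $\grad W = G_{\mathrm{near}} + G_{\mathrm{far}}$: as $G_{\mathrm{far}}$ has bounded gradient, $|G_{\mathrm{far}}\ast\mu(x) - G_{\mathrm{far}}\ast\mu(y)| \le \|\grad G_{\mathrm{far}}\|_\infty |x-y|$, and since $\psi(t) \ge t$ this already suffices for the far part. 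For $G_{\mathrm{near}}$ (supported in a ball, $|G_{\mathrm{near}}(x)| \lesssim |x|^{\alpha-d-1}$): for $h := |x-y| \ge 1$ use the $L^\infty$ bound on $G_{\mathrm{near}}\ast\mu$ and the lower bound on $\psi$; for $h$ small, write $G_{\mathrm{near}}\ast\mu(x) - G_{\mathrm{near}}\ast\mu(y) = \int_{|x-w|\le 2h}(\cdots)\,d\mu + \int_{|x-w|>2h}(\cdots)\,d\mu$. The first integral is $\lesssim \|\mu\|_p \, h^{\alpha - 1 - d/p}$ by H\"older, and it is precisely the hypothesis $p \ge d/(\alpha-2)$ that forces $\alpha - 1 - d/p \ge 1$. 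In the second integral one applies the mean value theorem together with $|D^2 W(x)| \lesssim |x|^{\alpha-d-2}$ to get a bound $\lesssim h\bigl(1 + \||\cdot|^{\alpha-d-2}\|_{L^{p'}(\{2h < |\cdot|\le C\})}\bigr)$; since $\alpha - d - 2 = -d/p'$ is exactly the $L^{p'}$-endpoint exponent, this norm is $\lesssim |\log h|^{1/p'} \lesssim |\log h|$, so the second integral is $\lesssim h|\log h|$ at the borderline $p = d/(\alpha-2)$ (and $\lesssim h$, i.e.\ genuine Lipschitz, when $p$ is strictly larger, which can only happen if $\alpha > 2$). Altogether $|\grad W\ast\mu(x) - \grad W\ast\mu(y)| \lesssim h|\log h| = \frac{1}{2}\sqrt{\psi(h^2)}$ for $h$ small, giving (iii); in the Newtonian/logarithmic case this is Yudovich's lemma \cite[Lemma 2.1]{Yudovich}.

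The main obstacle will be the Loeper-type estimate (v), $\|\grad W\ast\mu - \grad W\ast\nu\|_{L^2(\rho)} \le C\,W_2(\mu,\nu)$. Splitting $\grad W = G_{\mathrm{near}} + G_{\mathrm{far}}$ and taking $\gamma$ an optimal plan for $(\mu,\nu)$, the far part is easy: $|G_{\mathrm{far}}\ast\mu(x) - G_{\mathrm{far}}\ast\nu(x)| \le \|\grad G_{\mathrm{far}}\|_\infty \int |y-z|\,d\gamma(y,z) \le \|\grad G_{\mathrm{far}}\|_\infty W_2(\mu,\nu)$ uniformly in $x$, and $\rho$ being a probability measure then controls $\|G_{\mathrm{far}}\ast(\mu-\nu)\|_{L^2(\rho)}$. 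The singular part $G_{\mathrm{near}}\ast(\mu-\nu)$ is where Loeper's inequality is needed. For the Newtonian/logarithmic kernel ($p = +\infty$) this is \cite[Theorem 2.7]{Loeper}: one has $\|\grad W\ast(\mu-\nu)\|_{L^2(\Rd)} \le C \max(\|\mu\|_\infty,\|\nu\|_\infty)^{1/2}\,W_2(\mu,\nu)$, whence $\|G_{\mathrm{near}}\ast(\mu-\nu)\|_{L^2(\rho)}^2 \le \|\rho\|_\infty \|G_{\mathrm{near}}\ast(\mu-\nu)\|_{L^2(\Rd)}^2$ closes it. For genuine Riesz potentials $2 < \alpha < d$, however, $\grad W\ast(\mu-\nu) = c\,\grad(-\Delta)^{-\alpha/2}(\mu-\nu)$ need not even belong to $L^2(\Rd)$, so one must argue directly in the weighted space. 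The plan is to adapt Loeper's argument along the displacement interpolant $\mu_s = \bigl((1-s)\id + s\,\bt_\mu^\nu\bigr)\# \mu$: differentiate $s \mapsto \|G_{\mathrm{near}}\ast(\mu_s - \mu)\|_{L^2(\rho)}^2$ and bound the resulting quantity by $W_2^2(\mu,\nu)$ using the uniform bound $\|\mu_s\|_p \le C_p$ along the interpolation together with the mapping properties of $D^2(-\Delta)^{-\alpha/2}$ (Hardy--Littlewood--Sobolev for $\alpha > 2$, Calder\'on--Zygmund at $\alpha = 2$); equivalently, one interpolates Loeper's $\dot H^{-1}$ bound against an $L^p$ bound inside a Fourier-analytic estimate of $\|G_{\mathrm{near}}\ast(\mu-\nu)\|_{L^2(\rho)}$. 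I expect making this quantitative precisely at the critical exponent $p = d/(\alpha-2)$ to be the delicate point, since there the relevant kernel fails $L^{p'}$ integrability by exactly a logarithm.

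Finally, the drift--diffusion part requires no new work. With $V = W\ast\rho$, $m = p$, and $\rho \in L^1 \cap L^p$ a probability density, one has $\grad V = \grad W\ast\rho$, so conditions (i)--(iv) of Assumption~\ref{potential assumption} are, respectively, conditions (i)--(iii) of Assumption~\ref{interaction assumption} applied with $\mu = \rho$ (the constant $4C^2$ in (iii) merely absorbs slack) together with the trivial inequality $1 < m = p \le +\infty$. Collecting the pieces of the first three paragraphs then establishes both halves of the proposition.
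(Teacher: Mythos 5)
Your verification of conditions (i), (ii), (iv), and (vi) is the same splitting argument the paper uses, and for (iii) you give a direct near/far potential-theory proof (ball of radius $2h$ giving $h^{\alpha-1-d/p}\le h$ exactly because $p\ge d/(\alpha-2)$, annulus giving the borderline $h|\log h|$) where the paper simply cites Yudovich and Carrillo--Lisini--Mainini in the Newtonian case and Garg--Spector for Riesz potentials; your computation is essentially the proof behind those citations and is sound, so that difference is only a matter of self-containedness. The drift--diffusion half is indeed handled in the paper exactly as you say, by running the same estimates with $\mu=\rho$.

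The genuine gap is condition (v), and you flag it yourself: for $2\le\alpha<d$ with $p\ge d/(\alpha-2)$ (in particular at the critical exponent) you only outline two possible strategies and state that you ``expect'' the critical case to be delicate, so the key estimate of the proposition is left unproved. The paper closes this point with a Bertozzi--Laurent--Rosado-type argument that sidesteps the endpoint difficulty you anticipate. Write $\grad W*\mu-\grad W*\nu=\int_0^1\frac{d}{d\beta}\,\grad W*\mu_\beta\,d\beta$ along the geodesic $\mu_\beta$ from $\mu$ to $\nu$; set $p_*=d/(\alpha-2)$, $p_*'=d/(d-\alpha+2)$, and use the weighted H\"older bound $\|f\|_{L^2(\rho)}\le\|\rho\|_{p_*}^{1/2}\|f\|_{2p_*'}$ together with Minkowski's integral inequality to reduce to estimating $\|D^2W*[(\bt_{\mu_\beta}^{\mu_1}-\bt_{\mu_\beta}^{\mu_0})\mu_\beta]\|_{2p_*'}$. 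The classical singular-integral/HLS estimate $\|D^2W*g\|_q\le C\|g\|_r$ with $q=2p_*'$ and $r=dq/(d+q(\alpha-2))$, followed by H\"older with $1/r=1/2+1/(2p_*)$, gives $\|(\bt_{\mu_\beta}^{\mu_1}-\bt_{\mu_\beta}^{\mu_0})\mu_\beta\|_r\le\|\bt_{\mu_\beta}^{\mu_1}-\bt_{\mu_\beta}^{\mu_0}\|_{L^2(\mu_\beta)}\|\mu_\beta\|_{p_*}^{1/2}=W_2(\mu,\nu)\,\|\mu_\beta\|_{p_*}^{1/2}$, and $\|\mu_\beta\|_{p_*}$ is uniformly controlled by the $L^p$ interpolation bound along (generalized) geodesics. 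Because the operator $D^2W$ enters only through its $L^r\to L^q$ boundedness with $q$ finite and $r>1$, rather than through $L^{p'}$ integrability of the kernel, no logarithmic loss occurs at $p=d/(\alpha-2)$; the weighted-differentiation and Fourier-interpolation routes you sketch are not carried out, and without an argument of this kind the Riesz case of (v) --- the one substantive assertion of the proposition beyond routine estimates --- remains unestablished.
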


In our proof of this proposition, we will use a few well-known properties of the $p$th power of the $L^p(\Rd)$ norm as a functional on the space of probability measures:
\[ \mu \mapsto \|\mu\|_p^p := \begin{cases} \|\mu\|_p^p  &\text{if $\mu \ll \mathcal{L}^d$,} \\ +\infty & \text{otherwise} .  \end{cases} \]
For $1\leq p < +\infty,$ this functional is lower semicontinuous with respect to the weak-* topology (hence with respect to the Wasserstein metric) and convex along generalized geodesics \cite[Proposition 9.3.9]{AGS} \cite[Lemma 3.4]{McCannConvexity}. Consequently, for a generalized geodesic $\mu_\alpha$ from $\mu_0$ to $\mu_1$,
\begin{align} \label{Lp interpolation} \|\mu_\alpha\|_p \leq \max \{ \| \mu_0\|_p , \|\mu_1\|_p \} , \text{ for all } 1 \leq p < +\infty .
\end{align}
Sending $p \to +\infty$, we likewise obtain,
\begin{align} \label{Linfty interpolation} \|\mu_\alpha\|_\infty \leq \max \{ \| \mu_0\|_\infty , \|\mu_1\|_\infty \} .
\end{align}

With this, we prove  Proposition \ref{Wexamples prop}.

\begin{proof}[Proof of Proposition \ref{Wexamples prop}]
We show that $W(x)$ and $p$ satisfy assumption \ref{interaction assumption}. The result for $V(x)$ and $m$ follows analogously. Let $C = C_{\alpha, d, p}>0$, where the value of this constant may increase from line to line. Note that, in part (\ref{first example}) when $\alpha =2$ and in part (\ref{second example}), $W(x)$ is a constant multiple of the Newtonian potential. Otherwise, $W(x)$ is a constant multiple of a Riesz potential.

We begin by showing (\ref{bddbelow ass}). Let $1/p + 1/p' = 1$. Since $p \geq d/(\alpha - 2)$, $p' \leq d/(d-(\alpha-2)) < d/(d-\alpha)$, and $W(x) \in L^{p'}_\loc$. Furthermore, if $B=B_1(0)$, then $x \in \Rd \setminus B$ implies $|W^-(x)|\leq c$. Therefore,
\[ |W^-*\mu(x) | \leq |(W^- 1_{B})*\mu(x) | + |(W^- 1_{\Rd \setminus B})*\mu(x)| \leq \|W^-\|_{L^{p'}(B)} \|\mu \|_{L^p(\Rd)} + c \|\mu\|_{L^1(\Rd)} \leq C . \]

Now we show part (\ref{subdiffbound}). As before, since $p \geq d/(\alpha - 2)$, $p' \leq d/(d-(\alpha-2)) < d/(d-(\alpha-1))$, and $\grad W(x) \in L^{p'}_\loc$. We also have $x \in \Rd \setminus B$ implies $|\grad W(x)| \leq c$, so as before, $|\grad W*\mu(x)| \leq C$, which gives the result, since $\nu \in \P_2(\Rd)$.

We now prove part (\ref{potentialestimate}).
When $W(x)$ is the Newtonian potential and $p = +\infty$, this is a classical potential theory estimate (c.f. \cite[Lemma 2.1]{Yudovich},\cite[Proposition 2.1]{CarrilloLisiniMainini}) For Riesz potentials and $p \geq d/(\alpha-2)$, see Garg and Spector \cite[Theorem A]{GargSpector}.

Next, we turn to part (\ref{Loeperestimate}).When $W(x)$ is the Newtonian potential and $p=+\infty$, this inequality is due to Loeper  \cite[Theorem 2.7]{Loeper}. For the general case, we follow an argument similar to Bertozzi, Laurent, and Rosado, in their proof of uniqueness of $L^p$ solutions to the aggregation equation \cite[Theorem 3.2]{BertozziLaurentRosado}. 

  By classical singular integral estimates (c.f. \cite[Theorem 9.9]{GilbargTrudinger}, \cite[Theorem V.1]{Stein}),
\begin{align} \label{sing int est}
\|D^2 W*\mu \|_{q} \leq C_{\alpha,d, q} \|\mu\|_{r} , \quad  r := \frac{dq}{d+q(\alpha-2)} , \quad \text{for $1<q<+\infty$ and $r>1$.}
\end{align}
 Define $p_* := d/(\alpha-2)$ and $p_*' := d/(d-(\alpha - 2))$. Note that if $\|\rho\|_p \leq C_p$, we also have $\|\rho \|_{p_*} \leq C$, since $\|\rho\|_1 =1$. Then, if $\mu_\beta:= ((1-\beta)\id + \beta\bt_{\mu}^\nu)\# \mu$ is the geodesic from $\mu$ to $\nu$ ,
\begin{align*}
&\|\grad W*\mu - \grad W*\nu \|_{L^2(\rho)} \leq \|\rho\|_{p_*}^{1/2} \|\grad W*\mu - \grad W*\nu\|_{2p_*'}  = C\left\| \int_0^1 \frac{d}{d \beta} \grad W* \mu_\beta d \beta \right\|_{2p_*'} .
\end{align*}
We then apply Minkowski's integral inequality to bound this term by
\begin{align*}
& \int_0^1 \left\| \frac{d}{d \beta} \grad W* \mu_\beta\right\|_{2p_*'} d \beta = \int_0^1 \left\| D^2 W(\cdot- (1-\beta) y - \beta \bt_{\mu_0}^{\mu_1}(y) ) (\bt_{\mu_0}^{\mu_1}(y) - y) d \mu_0(y) \right\|_{2p_*'} d \beta \\
& = \int_0^1 \left\| D^2 W(\cdot- y ) (\bt_{\mu_\beta}^{\mu_1}(y) - \bt_{\mu_\beta}^{\mu_0}(y)) d \mu_\beta(y) \right\|_{2p_*'} d \beta = \int_0^1 \left\| D^2 W*[(\bt_{\mu_\beta}^{\mu_1} - \bt_{\mu_\beta}^{\mu_0})\mu_\beta] \right\|_{2p_*'} d \beta .
\end{align*}
By inequality (\ref{sing int est}) with $q= 2p_*'$ and H\"older's inequality,
\begin{align*}
\left\| D^2 W*[(\bt_{\mu_\beta}^{\mu_1} - \bt_{\mu_\beta}^{\mu_0})\mu_\beta] \right\|_{2p'} &\leq C\|(\bt_{\mu_\beta}^{\mu_1} - \bt_{\mu_\beta}^{\mu_0})\mu_\beta\|_{r} \leq  CW_2(\mu_0,\mu_1) \|\mu_\beta\|_{p_*}^{1/2} .
\end{align*} 
Therefore, combining the above inequalities, we obtain,
\begin{align*}
&\|\grad W*\mu - \grad W*\nu \|_{L^2(\rho)} \leq  C \|\mu_\beta\|_{p_*}^{1/2} W_2(\mu_0,\mu_1) \leq C W_2(\mu_0,\mu_1) , \end{align*}
where the third inequality uses (\ref{Lp interpolation}) and (\ref{Linfty interpolation}).

Finally, part (\ref{lsc ass}) follows if we adopt the convention that $W(x) = +c |x|^{\alpha - d}$ satisfies $W(0) = 0$ and for the remaining choices of $W(x)$, we take $W(0) = -\infty$.
\end{proof}

\subsection{$\omega$-convexity of constrained interaction and drift diffusion}
We now prove Theorem \ref{constrained interaction theorem}, beginning by  showing that both energies are lower semicontinuous.

\begin{proposition} \label{lsc}
The energies $\mathcal{W}_p$ and $\mathcal{V}_m$ in Theorem \ref{constrained interaction theorem} are lower semicontinuous with respect to weak-* convergence of probability measures.
\end{proposition}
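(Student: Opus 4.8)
The plan is to split each of $\mathcal{W}_p$ and $\mathcal{V}_m$ into its two constituent terms and prove weak-$*$ lower semicontinuity of each term separately; since every term will be bounded below by a finite constant, their sums are automatically well defined and lower semicontinuous.

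\emph{The drift diffusion energy.} I would handle $\mathcal{V}_m$ first, since it uses only standard facts. The diffusion term $\mu \mapsto \frac{1}{m-1}\int \mu^m$ (understood as $+\infty$ when $\mu \not\ll \mathcal{L}^d$, and replaced by $I_\infty$ when $m = +\infty$) is, up to a multiplicative constant, the $m$-th power of the $L^m$-norm functional on probability measures, which is weak-$*$ lower semicontinuous \cite[Proposition 9.3.9]{AGS}, \cite[Lemma 3.4]{McCannConvexity}; when $m = +\infty$, $I_\infty$ is simply the indicator of the weak-$*$ closed set $\{\mu : \|\mu\|_\infty \leq 1\}$, hence lower semicontinuous. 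The drift term $\mu \mapsto \int V\,d\mu$ is weak-$*$ lower semicontinuous because $V$ is continuously differentiable by Assumption \ref{potential assumption} (\ref{potentialestimate2}), hence lower semicontinuous, and bounded below by $-C'$ by Assumption \ref{potential assumption} (\ref{bddbelow ass2}); so lower semicontinuity follows from the Portmanteau theorem. A sum of two weak-$*$ lower semicontinuous functionals, each bounded below by a finite constant, is weak-$*$ lower semicontinuous, which gives the claim for $\mathcal{V}_m$.

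\emph{The constrained interaction energy.} Let $\mu_n \to \mu$ weak-$*$. We may assume $\ell := \liminf_n \mathcal{W}_p(\mu_n) < +\infty$, and after passing to a subsequence may assume $\mathcal{W}_p(\mu_n) \to \ell$ and every $\mu_n$ lies in $K_p := \{\nu : \|\nu\|_p \leq C_p\}$. The $L^p$-norm functional is weak-$*$ lower semicontinuous for every $1 < p \leq +\infty$ (for $p < +\infty$ by \cite[Proposition 9.3.9]{AGS}; for $p = +\infty$ because $\|\cdot\|_\infty$ is a supremum of weak-$*$ continuous functionals), so $K_p$ is weak-$*$ closed and $\mu \in K_p$. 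It then remains to show $\int\!\!\int W(x-y)\,d(\mu\otimes\mu) \leq \liminf_n \int\!\!\int W(x-y)\,d(\mu_n\otimes\mu_n)$. Since $\mu_n\otimes\mu_n \to \mu\otimes\mu$ weak-$*$ and $(x,y) \mapsto W(x-y)$ is lower semicontinuous by Assumption \ref{interaction assumption} (\ref{lsc ass}), this would be immediate from the Portmanteau theorem if $W$ were bounded below (a lower semicontinuous, bounded-below kernel is an increasing supremum of bounded continuous functions, so the associated interaction functional is a supremum of weak-$*$ continuous functionals). To deal with the singularity of $W$, I would truncate: for $k \in \mathbb{N}$ set $W_k := \max\{W, -k\}$; the bounded-below case gives $\int\!\!\int W_k\,d(\mu\otimes\mu) \leq \liminf_n \int\!\!\int W_k\,d(\mu_n\otimes\mu_n)$, and then estimating $\int\!\!\int W\,d(\mu_n\otimes\mu_n) \geq \int\!\!\int W_k\,d(\mu_n\otimes\mu_n) - \int\!\!\int (W^- - k)^+(x-y)\,d(\mu_n\otimes\mu_n)$ and sending $k \to +\infty$ should close the argument: the left-hand side recovers $\int\!\!\int W\,d(\mu\otimes\mu)$ by dominated convergence (using $\int\!\!\int W^-\,d(\mu\otimes\mu) \leq C'$, which is Assumption \ref{interaction assumption} (\ref{bddbelow ass})), while the truncation error, bounded via Hölder's inequality and $\|\mu_n\|_p \leq C_p$ by $C_p\,\|W^-\|_{L^{p'}(\{W^- > k\})}$ with $1/p + 1/p' = 1$, tends to $0$ uniformly in $n$ — here one uses that Assumption \ref{interaction assumption} (\ref{bddbelow ass}) forces $W^- \in L^{p'}_{\loc}$ and confines $\{W^- > k\}$ to a fixed bounded set shrinking to a null set as $k \to +\infty$.

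\emph{Main obstacle.} The two norm functionals and the drift term $\int V\,d\mu$ are routine. The crux is the uniform-in-$n$ bound on the truncation error for $\mathcal{W}_p$: this is the step that genuinely requires combining the height constraint $\|\mu_n\|_p \leq C_p$ with the uniform bound $W^- * \mu \leq C'$ on the domain (Assumption \ref{interaction assumption} (\ref{bddbelow ass})), and some care is needed to extract from (\ref{bddbelow ass}) the local $L^{p'}$ integrability and the decay of $W^-$ that make $\|W^-\|_{L^{p'}(\{W^- > k\})} \to 0$.
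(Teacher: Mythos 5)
Your argument is correct and follows essentially the same route as the paper: $\mathcal{V}_m$ is treated identically, the weak-$*$ closedness of $\{\|\mu\|_p\leq C_p\}$ is obtained the same way (your supremum-of-weak-$*$-continuous-functionals argument for $p=+\infty$ replacing the paper's intersection over finite $p$), and your truncation of $W$ at level $-k$ is just a hands-on proof of what the paper gets by asserting uniform integrability of $W^-(x-y)$ with respect to $\mu_n\otimes\mu_n$ from Assumption \ref{interaction assumption} (\ref{bddbelow ass}) and invoking \cite[Lemma 5.1.7]{AGS}. The only caution is your parenthetical claim that Assumption \ref{interaction assumption} (\ref{bddbelow ass}) confines $\{W^->k\}$ to a fixed bounded set, which is more than (\ref{bddbelow ass}) literally gives; all that is needed (and all the paper itself extracts from (\ref{bddbelow ass}), and what is immediate for the kernels of Proposition \ref{Wexamples prop}) is the uniform smallness $\sup_n \iint (W^--k)^+(x-y)\, d\mu_n(x)\, d\mu_n(y) \to 0$ as $k\to+\infty$, i.e., exactly the uniform integrability.
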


\begin{proof}[Proof of Lemma \ref{lsc}]
The lower semicontinuity of $\mathcal{V}_m$ with respect to weak-* convergence is an immediate consequence of the fact $\mu \mapsto \|\mu\|_m^m$ is lower semicontinuous with respect to the weak-* topology \cite[Lemma 3.4]{McCannConvexity} and that $V(x)$ is continuous and bounded below \cite[Equation 5.1.15]{AGS}. Thus, it remains to show the result for $\mathcal{W}_p$.

First, we show that, for any $c>0$, $\{ \mu \in \P_{2,\nu_0}(\Rd): \|\mu\|_p \leq c\}$ is closed in the weak-* topology. For $1 \leq p < +\infty$, this is an immediate consequence of the fact that $\| \cdot \|_p^p$ is lower semicontinuous with respect to this topology. In the case $p=+\infty$, since $\lim_{p \to \infty} \|\mu\|_p = \|\mu\|_\infty$ and $\| \mu\|_p \leq \|\mu \|_\infty^{(p-1)/p} \|\mu\|_1^{1/p} = \|\mu \|_\infty^{(p-1)/p}$, we have
\begin{align} \label{lsclem1} \{ \mu : \|\mu\|_{L^\infty} \leq c \} = \bigcap_{1<p< +\infty} \{ \mu : \| \mu \|_p \leq c^{(p-1)/p} \} ,
\end{align}
which is also closed.

Now, we use this to prove the lower semicontinuity of $\mathcal{W}_p$. Suppose $\mu_n\wsto \mu$. If $\liminf_n \mathcal{W}_p(\mu_n) = +\infty$, the result holds trivially, so we may assume that $\liminf_n \mathcal{W}_p(\mu_n) = \tilde{C} < +\infty$. Choose a subsequence $\tilde{\mu}_n$ so that $\lim_n \mathcal{W}_p(\tilde{\mu}_n) = \tilde{C}$ and $\|\tilde{\mu}_n \|_p \leq c$. Since  $\{ \mu : \|\mu\|_p \leq c\}$ is closed, $\|\mu\|_p \leq c$. By part (\ref{bddbelow ass}) of assumption \ref{interaction assumption},  $W^-(x-y)$ is uniformly integrable with respect to $\tilde{\mu}_n \times \tilde{\mu}_n$. Therefore, by \cite[Lemma 5.1.7]{AGS},
\begin{align*}
 \liminf_{n \to \infty} \iint W(x-y) d \mu_n(x) d\mu_n(y)  =  \lim_{n \to \infty} \iint W(x-y) d \tilde{\mu}_n(x) d\tilde{\mu}_n(y)  \geq \iint W(x-y) d \mu(x) d \mu(y),
 \end{align*}
 which gives the rsult
\end{proof}

Next, we characterize the directional derivatives of the energy $\mathcal{W}_p$ and the directional derivative of the drift portion of the energy $\mathcal{V}_m$. For the latter, this is the $\omega$-convex analogue of Ambrosio, Gigli, and Savar\'e's characterization of the subdifferential for semiconvex functions \cite[Equation 10.1.7]{AGS}. For the former, our result does not offer a characterization of the subdifferential, since we only compute the directional derivative of $\mathcal{W}_p$ along curves that lie in the domain of $\mathcal{W}_p$.

\begin{proposition} \label{Econvex}
Suppose $\mathcal{W}_p$ is as in Theorem \ref{constrained interaction theorem}, $\mu_0,\mu_1 \in D(\mathcal{W}_p)$, and $\gamma  \in \Gamma(\mu_0, \mu_1)$ is such that  $\mu_\alpha(x)=( (1-\alpha) \pi_1 + \alpha \pi_2) \# \gamma$ satisfies $\|\mu_\alpha\|_p \leq C_p \text{ for all } \alpha \in [0,1]$. Then $\mathcal{W}_p(\mu_\alpha)$ is continuously differentiable, $ \left. \frac{d}{d \alpha } \mathcal{W}_p(\mu_\alpha) \right|_{\alpha = 0} =  \iint \la \grad W * \mu_{0}(x), y-x \ra d \gamma$, and for $\omega(x) := \sqrt{ x \psi(x)}$,
\[ \left|  \mathcal{W}_p(\mu_{0}) -\mathcal{W}_p(\mu_{1})  + \left. \frac{d}{d \alpha } \mathcal{W}_p(\mu_\alpha) \right|_{\alpha = 0} \right| \leq 2C \omega \left( \|x-y\|_{L^2(\gamma)}^2 \right). \]
\end{proposition}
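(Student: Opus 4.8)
## Proof Proposal for Proposition \ref{Econvex}

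The plan is to work with the transport plan $\gamma$ throughout. Since $\|\mu_\alpha\|_p \leq C_p$ for every $\alpha \in [0,1]$, the whole curve lies in $D(\mathcal{W}_p)$, so, writing $\pi_\alpha(x,y) := (1-\alpha)x + \alpha y$ and pushing $\mu_\alpha = \pi_\alpha \# \gamma$ forward,
\[ g(\alpha) := \mathcal{W}_p(\mu_\alpha) = \frac{1}{2}\iint W(\pi_\alpha(x,y) - \pi_\alpha(x',y')) \, d\gamma(x,y)\, d\gamma(x',y') = \frac{1}{2}\int (W*\mu_\alpha)(\pi_\alpha(x,y))\, d\gamma(x,y) . \]
The first form makes the symmetry of the integrand transparent, and the second exhibits the structure needed to differentiate.

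First I would differentiate $g$, freezing one copy of $\gamma$ before differentiating. Since $\|\mu_\alpha\|_p \le C_p$, Assumption~\ref{interaction assumption}(\ref{potentialestimate}) guarantees that $W*\mu_t$ is continuously differentiable, so $h(s,t) := \int (W*\mu_t)(\pi_s(x,y))\,d\gamma(x,y)$ may be differentiated in $s$: write the difference quotient with the fundamental theorem of calculus along the segment from $\pi_s(x,y)$ to $\pi_{s+\varepsilon}(x,y)$, swap order of integration (legitimate since $\|\grad W*\mu_t\|_{L^2(\nu)} \le C'$ for all $\nu$ by Assumption~\ref{interaction assumption}(\ref{subdiffbound}), so the relevant double integral of absolute values is $\le C'\|x-y\|_{L^2(\gamma)} < +\infty$), and pass $\varepsilon \to 0$ using pointwise continuity of $\grad W*\mu_t$ together with the uniform $L^2$ bound. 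This gives $\partial_s h(s,t) = \int \la (\grad W*\mu_t)(\pi_s(x,y)), y-x\ra\,d\gamma$. Since $2g(\alpha) = h(\alpha,\alpha)$ and $h$ is symmetric, the chain rule yields
\[ g'(\alpha) = \int \la (\grad W*\mu_\alpha)(\pi_\alpha(x,y)), y-x\ra \, d\gamma(x,y) , \]
and Cauchy--Schwarz with Assumption~\ref{interaction assumption}(\ref{subdiffbound}) gives $|g'(\alpha)| \le C'\|x-y\|_{L^2(\gamma)}$, so $g' \in L^\infty([0,1])$; the continuity of $\alpha \mapsto g'(\alpha)$ follows from $W_2(\mu_\alpha,\mu_{\alpha_0}) \to 0$ and Assumption~\ref{interaction assumption}(\ref{potentialestimate}). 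At $\alpha = 0$ this is the claimed formula for the derivative.

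Next I would apply the fundamental theorem of calculus: $\mathcal{W}_p(\mu_0) - \mathcal{W}_p(\mu_1) + g'(0) = \int_0^1 \big(g'(0) - g'(\alpha)\big)\,d\alpha$, so it remains to estimate $|g'(\alpha) - g'(0)|$. Writing
\[ (\grad W*\mu_\alpha)(\pi_\alpha(x,y)) - (\grad W*\mu_0)(x) = \big[(\grad W*\mu_\alpha)(\pi_\alpha(x,y)) - (\grad W*\mu_\alpha)(x)\big] + \big[(\grad W*\mu_\alpha)(x) - (\grad W*\mu_0)(x)\big] \]
and using Cauchy--Schwarz against $(y-x)$ in $L^2(\gamma)$: for the first bracket, Assumption~\ref{interaction assumption}(\ref{potentialestimate}) gives $|(\grad W*\mu_\alpha)(\pi_\alpha(x,y)) - (\grad W*\mu_\alpha)(x)|^2 \le C^2 \psi(\alpha^2|y-x|^2)$, and Jensen's inequality for the concave $\psi$ bounds its $L^2(\gamma)$ norm by $C\sqrt{\psi(\alpha^2 D^2)}$, where $D^2 := \|x-y\|_{L^2(\gamma)}^2$; for the second bracket, Assumption~\ref{interaction assumption}(\ref{Loeperestimate}) with $\rho = \mu_0$ together with $W_2(\mu_0,\mu_\alpha) \le \alpha D$ (using the transport plan from $\mu_0$ to $\mu_\alpha$ induced by $(x,y)\mapsto (x,(1-\alpha)x+\alpha y)$) bounds its $L^2(\mu_0)$ norm by $C\alpha D$. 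Hence $|g'(\alpha) - g'(0)| \le CD\sqrt{\psi(\alpha^2 D^2)} + C\alpha D^2$; integrating over $\alpha \in [0,1]$, using that $\psi$ is nondecreasing (so $\sqrt{\psi(\alpha^2 D^2)} \le \sqrt{\psi(D^2)}$) and that $\psi(x) \ge x$ (so $D^2 \le \sqrt{D^2\psi(D^2)} = \omega(D^2)$), bounds the whole expression by $C\omega(D^2) + \tfrac12 C\omega(D^2) \le 2C\omega(D^2)$, which is the claim.

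I expect the main obstacle to be the rigorous differentiation of $g$ and the continuity of $g'$ in the presence of the singularity of $W$ at the origin: naively differentiating the double integral $\iint W(\pi_\alpha(x,y) - \pi_\alpha(x',y'))\,d\gamma\,d\gamma$ fails, since $|\grad W|^2$ need not be locally integrable, which is precisely why one must integrate out one copy of $\gamma$ first and exploit the continuous differentiability of $W*\mu_\alpha$ and the uniform $L^2$ control provided by Assumptions~\ref{interaction assumption}(\ref{subdiffbound}) and (\ref{potentialestimate}). The remaining steps are routine bookkeeping.
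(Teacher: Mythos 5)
Your proposal is correct and is essentially the paper's own argument: you symmetrize by freezing one copy of the measure, differentiate under the integral using assumptions \ref{interaction assumption} (\ref{subdiffbound})--(\ref{potentialestimate}), and then combine the fundamental theorem of calculus with the two-term decomposition (spatial shift controlled by (\ref{potentialestimate}) plus Jensen, change of measure controlled by (\ref{Loeperestimate}) via the coupling $(\pi_1,\pi_\alpha)\#\gamma$), integrating in $\alpha$ and using $\psi(x)\geq x$ to reach the bound $2C\,\omega\bigl(\|x-y\|^2_{L^2(\gamma)}\bigr)$, exactly as in the paper's estimate culminating in its inequality (\ref{heightomegaconvex3}). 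The only cosmetic differences are that you apply the Loeper-type estimate with base measure $\mu_0$ rather than $\mu_\beta$ and phrase the symmetrization as a chain rule for $h(s,t)$; note that the continuity of $g'$ (and the joint continuity of $\partial_s h$ needed for that chain rule) requires assumption (\ref{Loeperestimate}) in addition to (\ref{potentialestimate}), which your own displayed estimate for $|g'(\alpha)-g'(0)|$ already supplies.
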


\begin{proof}
Let $\pi_\beta = ( (1-\beta) \pi_1 + \beta \pi_2)$. First,  we compute
\begin{align} \label{heightomegaconvex1}
&\frac{d}{d \alpha} \mathcal{W}_p(\mu_\alpha) = \lim_{h \to 0} \frac{1}{h} [\mathcal{W}_p(\mu_{\alpha+h}) - \mathcal{W}_p(\mu_\alpha)]  \nonumber \\
&= \lim_{h \to 0} \frac{1}{2h} \left[ \int W*\mu_\alpha d\mu_{\alpha +h}  - \int W* \mu_{\alpha}  d \mu_\alpha \right]  +  \frac{1}{2h} \left[ \int  W* \mu_{\alpha+h} d\mu_{\alpha +h} - \int W*\mu_{\alpha+h} d\mu_{\alpha} \right] \nonumber \\
&=  \lim_{h \to 0} \frac{1}{2h} \iint [(W*\mu_\alpha)\circ \pi_{\alpha +h}   -(W* \mu_{\alpha})\circ \pi_\alpha ]d \gamma  +\frac{1}{2h} \iint  [(W* \mu_{\alpha+h})\circ \pi_{\alpha + h} - (W*\mu_{\alpha+h})\circ \pi_{\alpha} ] d\gamma  \nonumber \\
&= \lim_{h \to 0} \frac{1}{2h} \iint k_\alpha \circ \pi_{\alpha +h} - k_\alpha \circ \pi_\alpha d \gamma +\frac{1}{2h}  \iint k_{\alpha + h}\circ \pi_{\alpha +h} - k_{\alpha + h} \circ \pi_{\alpha} d \gamma , 
\end{align}
where $k_{\tilde{\alpha}}(x) := W * \mu_{\tilde{\alpha}}(x)$. We consider both terms simultaneously by taking $\tilde{\alpha} = \alpha$ or $\alpha + h$. Since $\pi_\beta(x)$ is continuously differentiable in $\beta$ and, by assumption \ref{interaction assumption} (\ref{potentialestimate}), $k_{\tilde{\alpha}}(x)$ is continuously differentiable with respect to $x$,
\begin{align*}
 k_{\tilde{\alpha}} \circ \pi_{\alpha + h} - k_{\tilde{\alpha}} \circ \pi_\alpha  &=  \int_\alpha^{\alpha +h} \frac{d}{d \beta} k_{\tilde{\alpha}} \circ \pi_\beta d\beta =  \int_{\alpha}^{\alpha +h} \la \grad W * \mu_{\tilde{\alpha}}\circ \pi_\beta, \pi_2 - \pi_1 \ra d \beta .
\end{align*}
Furthermore, since $\|\mu_{\tilde{\alpha}} \|_p, \|\mu_\beta\|_p \leq C_p$,  assumption \ref{interaction assumption} (\ref{subdiffbound}) ensures $\| \grad W * \mu_{\tilde{\alpha}} \|_{L^2(d \mu_\beta)} \leq C'$.
Consequently, we may interchange the order of integration in (\ref{heightomegaconvex1}) and add and subtract to obtain
\begin{align} \label{heightomegaconvex2}
\frac{d}{d \alpha} \mathcal{W}_p(\mu_\alpha)  = \lim_{h \to 0} \frac{1}{h}& \int_{\alpha}^{\alpha + h}  \iint_{\Rd \times \Rd}  \la \grad W * \mu_{\alpha}\circ \pi_\beta, \pi_2 - \pi_1 \ra d \gamma d \beta   \\
& + \frac{1}{2h}  \int_\alpha^{\alpha +h}\iint_{\Rd \times \Rd} \la \grad W * \mu_{\alpha +h}\circ \pi_\beta - \grad W *\mu_\alpha \circ \pi_\beta, \pi_2 - \pi_1 \ra d \gamma d \beta . \nonumber
\end{align}

In order to compute the limits on the right hand side, note that for any $\alpha, \tilde{\alpha}, \beta, \tilde{\beta} \in [0,1]$,
\begin{align} \label{heightomegaconvex3}
& \iint |\grad W *\mu_{\tilde{\alpha}} \circ \pi_{\tilde{\beta}} - \grad W* \mu_{\alpha} \circ \pi_{\beta}| |\pi_2- \pi_1| d \gamma  \\
&\quad \leq \iint \left [ |\grad W *\mu_{\tilde{\alpha}} \circ \pi_{\tilde{\beta}} - \grad W* \mu_{\tilde{\alpha}} \circ \pi_{\beta}| + |\grad W *\mu_{\tilde{\alpha}} \circ \pi_{\beta} - \grad W* \mu_{\alpha} \circ \pi_{\beta}| \right] |\pi_2- \pi_1| d \gamma \nonumber\\ 
&\quad \leq  \left[ \| \grad W *\mu_{\tilde{\alpha}} \circ \pi_{\tilde{\beta}} - \grad W * \mu_{\tilde{\alpha}} \circ \pi_\beta \|_{L^2(d \gamma)}  + \| \grad W * \mu_{\tilde{\alpha}} - \grad W *\mu_{\alpha}\|_{L^2(d \mu_\beta)} \right] \|\pi_1 - \pi_2\|_{L^2(d \gamma)} \nonumber  \\
 &\quad \leq \left[ C \sqrt{\| \psi(|\pi_{\tilde{\beta}} - \pi_{\beta}|^2) \|_{L^1(d\gamma)} } +  \| \grad W * \mu_{\tilde{\alpha}} - \grad W *\mu_{\alpha}\|_{L^2(dx)} \right] \|\pi_1 - \pi_2\|_{L^2(d \gamma)} \nonumber \\
 &\quad \leq \left[ C \sqrt{\psi(\| |\pi_{\tilde{\beta}} - \pi_{\beta}|^2  \|_{L^1(d \gamma)})} + C W_2(\mu_{\tilde{\alpha}}, \mu_\alpha) \right]  \|x-y\|_{L^2(d \gamma)} \nonumber \\
 &\quad = \left[ C\sqrt{ \psi( |\tilde{\beta} - \beta|^2 \|x-y\|_{L^2(d \gamma)}^2)} +C |\tilde{\alpha} - \alpha| \|x-y\|_{L^2(d \gamma)} \right] \|x-y\|_{L^2(d \gamma)}, \nonumber
\end{align}
where in the third inequality, we use assumption \ref{interaction assumption} (\ref{potentialestimate}), and in the fourth inequality we Jensen's inequality for the concave function $\psi(x)$ and assumption \ref{interaction assumption} (\ref{Loeperestimate}). When $\alpha = \tilde{\alpha}$, this estimate ensures
\[  \beta \mapsto \iint  \la \grad W * \mu_{\alpha}\circ \pi_\beta, \pi_2 - \pi_1 \ra d \gamma \]
is continuous, so that the first term in (\ref{heightomegaconvex2}) converges to $ \iint  \la \grad W * \mu_{\alpha}\circ \pi_\alpha, \pi_2 - \pi_1 \ra d \gamma$. Likewise, when $\beta = \tilde{\beta}$, this estimate guarantees that the second term in (\ref{heightomegaconvex2}) is bounded by
\[ \lim_{h \to 0} \frac{1}{2h} \int_\alpha^{\alpha + h} h \|x-y\|_{L^2(d \gamma)}^2 d \beta = 0 .\]
 Therefore, we conclude
\begin{align} \label{heightomegaconvex4}
 \frac{d}{d\alpha} \mathcal{W}_p(\mu_\alpha) = \iint  \la \grad W * \mu_{\alpha}\circ \pi_\alpha, \pi_2 - \pi_1 \ra d \gamma .
 \end{align}
 
By (\ref{heightomegaconvex3}) again, $\frac{d}{d\alpha} \mathcal{W}_p(\mu_\alpha)$ is continuous for $\alpha \in [0,1]$. Therefore,
\begin{align*}
 &\mathcal{W}_p(\mu_1)  =\mathcal{W}_p(\mu_0) +  \int_0^1 \frac{d}{d\alpha} \mathcal{W}_p(\mu_\alpha) d \alpha \\
&= \mathcal{W}_p(\mu_0) + \left. \frac{d}{d\alpha} \mathcal{W}_p(\mu_\alpha) \right|_{\alpha =0} + \int_0^1\iint_{\Rd \times \Rd}  \la \grad W*\mu_\alpha \circ \pi_\alpha - \grad W*  \mu_0,\pi_2 - \pi_1 \ra d \gamma  d\alpha    .
\end{align*}
To prove the result, it suffices bound the third term by $2C \omega \left(\|x-y\|_{L^2(\gamma)}^2 \right)$, where $\omega(x) = \sqrt{ x \psi(x)}$.
This follows by combining inequality (\ref{heightomegaconvex3}) with assumption \ref{interaction assumption} (\ref{potentialestimate}), which ensures $\psi(x) \geq x$,
  \begin{align*}
&\int_0^1 \left[ C\sqrt{\psi(\alpha^2 \|x-y\|^2_{L^2(d \gamma)})} + C|\alpha| \|x-y\|_{L^2(d \gamma)} \right] \|x-y\|_{L^2(d \gamma)} d \alpha \leq 2C \omega\left( \|x-y\|^2_{L^2(d \gamma)} \right).
\end{align*}
\end{proof}

\begin{proposition} \label{Vconvex}
Suppose $V(x)$ is as in Theorem \ref{constrained interaction theorem}, $\mu_0,\mu_1 \in \P_2(\Rd)$, and  $\gamma  \in \Gamma(\mu_0, \mu_1)$. Then $\int V d\mu_\alpha$ is continuously differentiable, $\left. \frac{d}{d\alpha} \int V d\mu_\alpha \right|_{\alpha =0} = \iint \la \grad V(x), y-x \ra d \gamma$, and for $\omega(x) := \sqrt{x \psi(x)}$,
\[  \int V d\mu_0 - \int V d\mu_1  + \left. \frac{d}{d \alpha} \int V d\mu_\alpha \right|_{\alpha =0} \leq 2C \omega \left(\|x-y\|_{L^2(\gamma)}^2 \right) . \]
\end{proposition}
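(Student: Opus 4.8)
The statement is the linear-in-$\mu$ analogue of Proposition \ref{Econvex}, and the proof is considerably simpler because $\mu \mapsto \int V\, d\mu$ is linear. Writing $\pi_\alpha := (1-\alpha)\pi_1 + \alpha \pi_2$, so that $\mu_\alpha = \pi_\alpha \# \gamma$, the plan is to start from
\[ \int V\, d\mu_\alpha = \iint V\big((1-\alpha)x + \alpha y\big)\, d\gamma(x,y) \]
and differentiate directly under the integral sign, avoiding the "add and subtract" manipulations of Proposition \ref{Econvex}, whose extra $\grad W$-difference terms are not present here since $V$ does not depend on $\mu$.

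First I would record that Assumption \ref{potential assumption} (\ref{potentialestimate2}) forces $\grad V$ to grow at most linearly: since $\psi$ is concave, nonnegative, and vanishes at $0$, the ratio $\psi(s)/s$ is nonincreasing, hence $\psi(s) \leq \psi(1)(1+s)$ for all $s \geq 0$, and therefore
\[ |\grad V(z)| \leq |\grad V(0)| + 2C\sqrt{\psi(|z|^2)} \leq |\grad V(0)| + 2C\sqrt{\psi(1)}\,(1 + |z|) . \]
Along $\pi_\alpha$ this yields the pointwise bound $|\la \grad V(\pi_\alpha), \pi_2 - \pi_1\ra| \leq \big(|\grad V(0)| + 2C\sqrt{\psi(1)}(1+|x|+|y|)\big)|x-y|$, which is $\gamma$-integrable (using $\mu_0,\mu_1 \in \P_2(\Rd)$ and $2ab \leq a^2+b^2$) and uniformly so for $\alpha \in [0,1]$. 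Since $V \in C^1$, dominated convergence then justifies differentiating under the integral:
\[ \frac{d}{d\alpha} \int V\, d\mu_\alpha = \iint \la \grad V\big((1-\alpha)x + \alpha y\big),\, y - x\ra\, d\gamma ; \]
continuity of $\grad V$ and the same domination show the right-hand side is continuous in $\alpha$, so $\alpha \mapsto \int V\, d\mu_\alpha$ is $C^1$, and evaluating at $\alpha=0$ gives the stated formula for the derivative.

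For the tangent-line inequality, integrate the derivative in $\alpha$ and rearrange:
\[ \int V\, d\mu_0 - \int V\, d\mu_1 + \left.\frac{d}{d\alpha}\int V\, d\mu_\alpha\right|_{\alpha=0} = \int_0^1 \iint \la \grad V(x) - \grad V\big((1-\alpha)x + \alpha y\big),\, y - x\ra\, d\gamma\, d\alpha . \]
By Cauchy--Schwarz in $L^2(\gamma)$, Assumption \ref{potential assumption} (\ref{potentialestimate2}), and Jensen's inequality for the concave $\psi$,
\[ \iint \big|\grad V(x) - \grad V\big((1-\alpha)x + \alpha y\big)\big|\, |y-x|\, d\gamma \leq 2C\sqrt{\iint \psi(\alpha^2|x-y|^2)\, d\gamma}\;\|x-y\|_{L^2(\gamma)} \leq 2C\sqrt{\psi\big(\alpha^2 \|x-y\|_{L^2(\gamma)}^2\big)}\;\|x-y\|_{L^2(\gamma)} . \]
Since $\psi$ is nondecreasing, $\psi(\alpha^2 \|x-y\|_{L^2(\gamma)}^2) \leq \psi(\|x-y\|_{L^2(\gamma)}^2)$ for $\alpha \in [0,1]$, so integrating over $\alpha \in [0,1]$ bounds the whole expression by $2C\sqrt{\|x-y\|_{L^2(\gamma)}^2\,\psi(\|x-y\|_{L^2(\gamma)}^2)} = 2C\,\omega(\|x-y\|_{L^2(\gamma)}^2)$, which is exactly the claim. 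There is no substantial obstacle here; the only step requiring care is the justification of differentiation under the integral, which is handled by the linear-growth bound on $\grad V$ extracted above. One could alternatively cite the argument of Proposition \ref{Econvex} verbatim with $W*\mu_{\tilde\alpha}$ replaced throughout by the fixed function $V$, discarding the terms involving $\grad W*\mu_{\alpha+h} - \grad W*\mu_\alpha$.
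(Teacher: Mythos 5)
Your proof is correct and follows essentially the same route as the paper: differentiate $\alpha \mapsto \int V\, d\mu_\alpha$ to get $\iint \la \grad V\circ\pi_\alpha, y-x\ra\, d\gamma$, integrate in $\alpha$, and bound the remainder via Cauchy--Schwarz, Assumption \ref{potential assumption} (\ref{potentialestimate2}), Jensen's inequality for the concave $\psi$, and monotonicity of $\psi$. Your explicit linear-growth bound on $\grad V$ to justify differentiation under the integral is a welcome extra detail that the paper leaves implicit, but it does not change the argument.
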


\begin{proof}
By assumption \ref{potential assumption} (\ref{potentialestimate2}), $V(x)$ is continuously differentiable. Let $\pi_\alpha := \left( (1-\alpha)\pi_1 + \alpha \pi_2 \right)$. Then,
\begin{align*}
\frac{d}{d \alpha} \int V d\mu_\alpha = \lim_{h \to 0} \frac{1}{h} \left[ \iint V\circ\pi_{\alpha+h} d \gamma - \iint V\circ \pi_{\alpha} d \gamma \right] = \iint \la \grad V \circ \pi_{\alpha}, y-x \ra d \gamma ,
\end{align*}
is continuous in $\alpha$. Therefore,
 \begin{align*} 
\int V\mu_1&= \int V \mu_0+ \left. \frac{d}{d \alpha} \int V d\mu_\alpha \right|_{\alpha =0}+  \iint \int_0^1 \la \grad V ((1-\alpha)x +\alpha y) - \grad V(x),y-x \ra d\alpha  d \gamma
 \end{align*}
It suffices to show that the last term is bounded by $ 2C \omega \left(\| x-y\|_{L^2(\gamma)}^2 \right).$  This follows from Holder's inequality, assumption \ref{potential assumption} (\ref{potentialestimate2}), and Jensen's inequality for the concave function $\psi(x)$,
 \begin{align*}
& \iint \left| \grad V \circ \pi_\alpha -\grad V(x) \right| \left| y-x \right| d \gamma(x,y)  \leq \|\grad V \circ \pi_\alpha - \grad V \|_{L^2(d \gamma)}\|x-y\|_{L^2(\gamma)}  \\
&\quad \leq  2C \sqrt{ \| \psi(|\pi_{\alpha} - \pi_{0}|^2) \|_{L^1(d\gamma)} } \|x-y\|_{L^2(d \gamma)}  \leq 2C \sqrt{ \psi(\| |\pi_{\alpha} - \pi_{0}|^2  \|_{L^1(d \gamma)})} \|x-y\|_{L^2(d \gamma)}\\
&\quad = 2C \sqrt{ \psi(\alpha^2 \| \pi_{1} - \pi_{0}  \|^2_{L^2(d \gamma)})} \|x-y\|_{L^2(d \gamma)} \leq 2C \omega(\|x-y\|_{L^2(\gamma)}^2) .
\end{align*}
\end{proof}

We now combine these results to prove Theorem \ref{constrained interaction theorem}.

\begin{proof}[Proof of Theorem \ref{constrained interaction theorem}]
By assumption \ref{interaction assumption} (\ref{potentialestimate}) and assumption \ref{potential assumption} (\ref{potentialestimate2}), $W(x)$ and $V(x)$ are continuous, so if $\mu$ is continuous and compactly supported  $\mathcal{W}_p(\mu) < +\infty$ and $\mathcal{V}_m(\mu)< +\infty$, hence the energies are proper. Proposition \ref{lsc} ensures that both energies are lower semicontinuous with respect to weak-* convergence of probability measures, hence they are lower semicontinuous with respect to the Wasserstein metric.

We now show that for all $\mu \in \P_2(\Rd)$ and $\tau >0$, $J_\tau \mu \neq \emptyset$, so that we may take $\tau_* =+\infty$. By assumption \ref{interaction assumption} (\ref{bddbelow ass}) and assumption \ref{potential assumption} (\ref{bddbelow ass2}), the energies are bounded below, and Proposition \ref{lsc} ensures they are lower semicontinuous with respect to the weak-* topology. Therefore, by \cite[Corollary 2.2.2]{AGS}, it suffices to show that every $W_2$-bounded set contained in a sublevel of $\mathcal{W}_p$ or $\mathcal{V}_m$ is relatively compact with respect to the weak-* topology.

If $\mu_n$ is a $W_2$ bounded sequence contained in a sublevel of $\mathcal{W}_p$, then there exists $\tilde{C}>C_p$ so that $\int |x|^2 \mu_n\leq \tilde{C}$ and $\|\mu_n \|_p \leq \tilde{C}$ for all $n$. On the other hand, if $\mu_n$ is a $W_2$ bounded sequence contained in a sublevel of $\mathcal{V}$, then  there exists $\tilde{C}>0$ so that $\int |x|^2 \mu_n\leq \tilde{C}$ and $\|\mu_n \|_m \leq \tilde{C}$ for all $n$, since $V(x)$ is bounded below. The remainder of the argument is identical for both energies, so we restrict our attention to $\mathcal{W}_p$.  By assumption \ref{interaction assumption} (\ref{p ass}), $1< p \leq +\infty$, so there exists a subsequence $\tilde{\mu}_n$ and a function $\mu \in L^p(\Rd)$ so that for any $h \in L^{p'}(\Rd)$, $\int h (\tilde{\mu}_n - \mu) \to 0$. Therefore, for any bounded, continuous function $f$, if $B_R$ is the ball of radius $R>0$ centered at the origin and $1_{B_R}$ is the characteristic function on that ball,
\begin{align} \label{const int thm1} \left|  \int f  \tilde{\mu}_n - \int f  \mu \right| \leq \left| \int_{B_R} f  \tilde{\mu}_n - f \mu \right| +  \int_{B_R^c} |f| (\tilde{\mu}_n + \mu)  .
\end{align}
Since $\int |x|^2 \tilde{\mu}_n\leq \tilde{C}$,
\[ \int_{B_R^c} |f| \tilde{\mu}_n \leq \frac{\|f\|_\infty}{R^2} \int |x|^2 \tilde{\mu}_n \leq \frac{\tilde{C} \|f\|_\infty}{R^2} . \]
Therefore, we may choose $R>0$ sufficiently large, uniformly in $n$, so that the second term in (\ref{const int thm1}) is arbitrarily small uniformly in $n$. Sending $n \to \infty$ then shows that $\tilde{\mu}_n \wsto \mu$.

Finally, we show that $\mathcal{W}_p$ and $\mathcal{V}$ are $\omega$-convex along generalized geodesics, where $\omega(x)$ is an Osgood modulus of convexity and $\tomega(x) = o (\sqrt{x})$ as $x \to 0$. $\mathcal{W}_p$ is $\omega$-convex along generalized geodesics with $\lambda_\omega = 4C$ and $\omega(x) = \sqrt{ x \psi(x)}$ by Propostions \ref{omega convex sufficient} and \ref{Econvex}. Likewise, $\mathcal{V}_m$ is $\omega$-convex along generalized geodesics for $\lambda_\omega = 4C$ and $\omega(x) = \sqrt{x \psi(x)}$ since $\mu \mapsto \|\mu\|_{m}^m$ is convex and $\mu \mapsto \int V d\mu$ is $\omega$-convex by Propositions \ref{omega convex sufficient} and  \ref{Vconvex}.
 By assumptions \ref{interaction assumption} (\ref{potentialestimate}) and \ref{potential assumption} (\ref{potentialestimate2}), $\psi:[0, +\infty) \to [0, +\infty)$ is continuous, nondecreasing, and concave and $\psi(0) = 0$. Therefore, $\omega(x) = \sqrt{x \psi(x)}$ has the same properties. In particular, to see that $\omega(x)$ is concave, note that for any $x,y \geq 0$ and $\alpha \in [0,1]$,
\begin{align*}
&\omega(\alpha x + (1-\alpha) y)^2 = (\alpha x + (1-\alpha) y) \psi(\alpha x + (1-\alpha) y)  \\
&\geq   (\alpha x + (1-\alpha) y) (\alpha \psi(x) + (1-\alpha) \psi(y)) =   \alpha^2 x \psi(x) + (1-\alpha)^2 y\psi(y) + \alpha (1-\alpha) (x \psi(y)+ y\psi(x)) \\
&\geq   \alpha^2 x \psi(x) + (1-\alpha)^2 y\psi(y) + 2\alpha (1-\alpha) \sqrt{ x y \psi(x)\psi(y)} =   \left(\alpha \omega(x) + (1-\alpha) \omega(y) \right)^2 ,
\end{align*}
where the first inequality follows from the fact that $\psi(x)$ is concave and the second next inequality uses that $2ab \leq a^2 +b^2$. Consequently, by the subadditivity of concave functions,
\[|  \omega(x) - \omega(y)| \leq \omega(|x-y|) , \]
and since $\int_0^1 \frac{dx}{\omega(x)} = +\infty$, $\omega(x)$ is an Osgood modulus of continuity with $\tomega(x) = \omega(x)$. Finally, by definition of $\omega(x) = \sqrt{x \psi(x)}$ and the fact that $\lim_{x \to 0^+} \psi(x) = 0$, we have $\omega(x) = o (\sqrt{x})$ as $x \to 0$.
\end{proof}

\appendix{

\section{}

\subsection{Further examples of $\omega$-convex energies} \label{applicationappendix}
In this section, we show that any energy that is $\phi$-uniformly convex, in the sense of Carrillo, McCann, and Villani  \cite{CarrilloMcCannVillani} and differentiable along geodesics is also $\omega$-convex along  geodesics, where $\omega(x)$ is an Osgood modulus of convexity satisfying $\tilde{\omega}(x) = o(\sqrt{x})$ as $x \to 0$.

\begin{proposition} \label{Carrillo et al Prop}
Suppose $E:\P_2(\Rd) \to \R \cup \{+\infty\}$ is lower semicontinuous and, for all generalized geodesics $\mu_\alpha$ with $\mu_0, \mu_1 \in D(E)$, $E(\mu_\alpha)$ is differentiable for $\alpha \in [0,1]$ and $\frac{d}{d \alpha} E(\mu_\alpha) \in L^1([0,1])$. Then if $E$ is $\phi$-uniformly convex along generalized geodesics, in the sense of \cite[Definition 2]{CarrilloMcCannVillani}, for $\phi$ satisfying \cite[Assumptions ($\phi_0$)-($\phi_2$)]{CarrilloMcCannVillani}, $E$ is $\omega$-convex along generalized geodesics
for an Osgood modulus of convexity satisfying $\tilde{\omega}(x) = o(\sqrt{x})$ as $x \to 0$.
 Furthermore, if $E$ merely satisfies the previous assumptions along geodesics, then $E$ is $\omega$-convex along geodesics.
\end{proposition}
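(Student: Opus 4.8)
The plan is to route everything through Proposition \ref{omega convex sufficient}: since the hypotheses guarantee that $\alpha\mapsto E(\mu_\alpha)$ is differentiable with $\frac{d}{d\alpha}E(\mu_\alpha)\in L^1([0,1])$, it suffices to extract from Carrillo, McCann, and Villani's $\phi$-uniform convexity an ``above the tangent line'' bound
\[ E(\mu_1) - E(\mu_0) - \left.\tfrac{d}{d\alpha}E(\mu_\alpha)\right|_{\alpha=0}\ \geq\ \tfrac{\lambda_\omega}{2}\,\omega\!\left(W_{2,\bnu}^2(\mu_0,\mu_1)\right) \]
for a suitable constant $\lambda_\omega$ and a suitable Osgood modulus of convexity $\omega$ with $\tilde\omega(x)=o(\sqrt x)$ as $x\to0$, and then invoke that proposition. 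The case of plain geodesics is then obtained by specializing $\bnu$ so that $\nu=\mu_0$ or $\nu=\mu_1$, i.e. $d=W_2$.

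To obtain this bound, fix a generalized geodesic $\mu_\alpha=\pi_\alpha\#\bnu$, $\pi_\alpha:=(1-\alpha)\pi^1+\alpha\pi^2$, with $\mu_0,\mu_1\in D(E)$, and write $E(\mu_1)-E(\mu_0)-\frac{d}{d\alpha}|_0E(\mu_\alpha)=\int_0^1\big(\frac{d}{d\alpha}|_sE(\mu_\alpha)-\frac{d}{d\alpha}|_0E(\mu_\alpha)\big)ds$. The content of Carrillo, McCann, and Villani's definition of $\phi$-uniform convexity is, in effect, a monotonicity estimate with modulus $\phi$ for the Wasserstein derivative along the interpolant; applied along the straight segments $\alpha\mapsto\pi_\alpha$ it bounds the integrand below pointwise on $\bnu$, and after carrying out the $s$-integration one arrives at
\[ E(\mu_1)-E(\mu_0)-\left.\tfrac{d}{d\alpha}E(\mu_\alpha)\right|_{\alpha=0}\ \geq\ \int_{\Rd\times\Rd\times\Rd}\Psi\!\left(|\pi^1-\pi^2|^2\right)d\bnu , \qquad \Psi(t):=\int_0^{\sqrt t}\frac{\phi(u)}{u}\,du , \]
where $\Psi$ is the natural ``double primitive'' of $\phi$ (e.g. a multiple of $t^{(b+2)/2}$ in the granular-media examples of Example \ref{potential granular}). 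Carrillo, McCann, and Villani's assumptions ($\phi_0$)--($\phi_2$) ensure that $\Psi$ is continuous, nondecreasing, vanishes only at $0$, and is \emph{convex}; setting $\lambda_\omega:=2$ and $\omega:=\Psi$ (up to a harmless rescaling to match the constants of Example \ref{potential granular}), Jensen's inequality for the convex function $\Psi$ gives $\int\Psi(|\pi^1-\pi^2|^2)\,d\bnu\geq\Psi\!\big(\int|\pi^1-\pi^2|^2\,d\bnu\big)=\omega\!\left(W_{2,\bnu}^2(\mu_0,\mu_1)\right)$, which is exactly the displayed inequality. Finally, convexity of $\omega$ together with $\omega(0)=0$ forces $\omega(x)\leq\omega(R)x/R$ on every $[0,R]$, so $\omega(x)=O(x)=o(\sqrt x)$ as $x\to0$; the concave majorant $\tilde\omega$ of $\omega$ on a bounded interval is the corresponding (linear) chord, whence $\tilde\omega(x)=o(\sqrt x)$ and $\int_0^1 dx/\tilde\omega(x)=+\infty$, so $\omega$ is an Osgood modulus of convexity of the required type. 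Proposition \ref{omega convex sufficient} then yields the result along generalized geodesics, and the same argument with $\nu\in\{\mu_0,\mu_1\}$ gives the final sentence.

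The step I expect to be the main obstacle is the first one: faithfully rephrasing Carrillo, McCann, and Villani's Definition~2, which is stated as a displacement-convexity modulus for potential- and internal-energy functionals on Euclidean space, as a monotonicity inequality for the Wasserstein derivative along interpolants, and then tracking through it the precise modulus $\Psi$ and constant $\lambda_\omega$---in particular confirming that ($\phi_1$)--($\phi_2$) are precisely what is needed for $\Psi$ to be convex, so that the ensuing Jensen step runs in the direction producing a lower (not upper) bound. Once the inequality is in the form above, the verification that $\omega=\Psi$ is an admissible Osgood modulus of convexity with $\tilde\omega(x)=o(\sqrt x)$, and the passage to the geodesic-only case, are routine.
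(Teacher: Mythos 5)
Your high-level skeleton agrees with the paper's: both routes reduce the statement to the above-the-tangent-line criterion of Proposition \ref{omega convex sufficient} and extract that inequality from Carrillo--McCann--Villani. The gap is in the key technical step, namely which modulus actually comes out of $\phi$-uniform convexity. What CMV provide (this is \cite[Lemma 2]{CarrilloMcCannVillani}, which the paper invokes directly) is already a metric-level monotonicity statement: for every generalized geodesic and every $\beta\in(0,1)$, the difference of derivatives $\frac{d}{d\alpha}E(\mu_\alpha)$ at $\alpha=\beta$ and at $\alpha=0$ is bounded below by $\phi\big(\beta\, W_{2,\bnu}(\mu_0,\mu_1)\big)\,W_{2,\bnu}(\mu_0,\mu_1)$. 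Integrating this in $\beta$ gives the tangent-line inequality with modulus $\omega_1(x)=\frac{2}{\lambda_\omega}\int_0^{\sqrt{x}}\phi(s)\,ds$; no pointwise-on-$\bnu$ inequality and no Jensen step is needed (or available, since the hypothesis is not a pointwise statement about the integrand). Your modulus $\Psi(t)=\int_0^{\sqrt{t}}\phi(u)/u\,du$, with the extra division by $u$, is not the right object, and the properties you attribute to it fail for perfectly admissible $\phi$: for the basic uniformly convex case $\phi(u)=u$ one gets $\Psi(t)=\sqrt{t}$, which is \emph{concave} (so your Jensen inequality runs in the wrong direction), is not $o(\sqrt{t})$, and is not Osgood since $\int_0^1 dx/\sqrt{x}<+\infty$; likewise in the granular-media example $\phi(u)\sim u^{b+1}$ your $\Psi\sim t^{(b+1)/2}$, not the $t^{(b+2)/2}$ of Example \ref{potential granular}. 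So the claim that ($\phi_0$)--($\phi_2$) make $\Psi$ convex, and the subsequent deduction of $O(x)$ decay and the Osgood property, collapse.

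A second, independent omission is the sign dichotomy in ($\phi_0$): you fix $\lambda_\omega=2$, i.e. implicitly assume $\phi\geq 0$, but CMV allow $\phi\leq 0$, and the two cases require different arguments. In the paper, for $\phi\geq 0$ one may replace $\omega_1$ by a smaller modulus (capping it at $x=1$), which is legitimate only because $\lambda_\omega>0$, and then superadditivity of $\phi$ from ($\phi_2$) yields $\phi(x)\leq 2\phi(1)x$ on $(0,1]$, so $\omega$ is Lipschitz and $\tomega(x)=Cx$; for $\phi\leq 0$ one cannot shrink the modulus, and instead the bound $\phi(x)\geq -kx$ from ($\phi_2$) shows $\omega_1$ itself is Lipschitz. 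In both cases the Osgood and $o(\sqrt{x})$ properties are consequences of ($\phi_2$), not of convexity of the modulus; your convexity-based derivation of the decay would be fine if the modulus were convex, but as the example $\phi(u)=u$ shows, the object you construct need not be.
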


\begin{proof}
We prove the result for $\omega$-convexity along generalized geodesics. The proof for $\omega$-convexity along geodesics is analogous.

By \cite[Lemma 2]{CarrilloMcCannVillani}, for any generalized geodesic $\mu_\alpha$ with $\mu_0,\mu_1 \in D(E)$ and for any $\beta \in (0,1)$,
\begin{align*}
\left. \frac{d}{d\alpha} E(\mu_\alpha) \right|_{\alpha = \beta}  -  \left. \frac{d}{d \alpha} E(\mu_\alpha) \right|_{\alpha=0} \geq \phi(\beta W_{2,\bnu}(\mu_0,\mu_1))  W_{2,\bnu}(\mu_0,\mu_1) .
\end{align*}

By \cite[Assumption ($\phi_0$)]{CarrilloMcCannVillani}, either $\phi \geq 0$, in which case let $\lambda_\omega = 1$, or $\phi \leq 0$, in which case let $\lambda_\omega =-1$. Define $\omega_1(x):= \frac{2}{\lambda_\omega} \int_0^{\sqrt{x}} \phi(s) ds$ and integrate the above inequality from $\beta = 0$ to $\beta =1$ to obtain
\begin{align*}
 E(\mu_1) -E(\mu_0)  -  \left. \frac{d}{d \alpha} E(\mu_\alpha) \right|_{\alpha=0} \geq \int_0^1 \phi(\beta W_{2,\bnu}(\mu_0,\mu_1))  W_{2,\bnu}(\mu_0,\mu_1)  d \beta = \frac{\lambda_\omega}{2} \omega_1(W_{2,\bnu}^2(\mu_0,\mu_1)) . \end{align*}
 By Lemma \ref{omega convex sufficient}, this ensures that $E$ is $\omega_1$-convex along generalized geodesics.
 
We now consider the cases $\phi \geq 0 $ and $\phi \leq 0$ separately to show that $E$ is $\omega$-convex along generalized geodesics for an Osgood modulus of convexity $\omega(x)$ satisfying $\tomega(x) = o(\sqrt{x})$ as $x \to 0$. If $\phi \geq 0$, then $\lambda_\omega >0$ and the fact that $E$ is $\omega_1$-convex ensures $E$ is also $\omega$-convex for any $\omega \leq \omega_1$. In particular, the condition holds for
\begin{align*}
\omega(x):= \begin{cases}
\frac{2}{\lambda_\omega} \int_0^{\sqrt{x}} \phi(s) ds &\text{ if } 0 \leq x \leq 1 ,\\
\frac{2}{\lambda_\omega} \int_0^{1} \phi(s) ds &\text{ if } x> 1 .
\end{cases}
\end{align*}
By \cite[Assumption ($\phi_0$)]{CarrilloMcCannVillani}, $\phi(x)$ is continuous, hence $\omega(x)$ is continuous, nondecreasing, and vanishes only at zero. By \cite[Assumption ($\phi_2$)]{CarrilloMcCannVillani}, $\phi(x)$ is superadditive, hence increasing. Consequently, if $x \in (0,1]$, there exists $n \in \mathbb{N}$ so that $1/(n+1) \leq x \leq 1/n$ and
\[ \phi(x) \leq \phi \left(\frac{1}{n} \right) \leq \ \frac{1}{n} \phi(1) = \frac{1}{n+1} \frac{n+1}{n} \phi(1) \leq 2 \phi(1) x   . \]
Therefore,
\[ |\omega(x) - \omega(y)| \leq \left. \begin{cases}\frac{2}{\lambda_\omega} \left| \int_{\sqrt{x}}^{\sqrt{y}} \phi(s) ds  \right| &\text{ if } x,y \in [0,1] \\
\frac{2}{\lambda_\omega} \int_{\min\{\sqrt{x}, \sqrt{y},1 \}}^1 \phi(s) ds   &\text{ otherwise} \end{cases} \right\}  \leq \frac{2 \phi(1)}{\lambda_\omega} |y-x| . \]
Since $\tomega(x):=  \frac{2 \phi(1)}{\lambda_\omega} x $ is an Osgood modulus of continuity for $\omega(x)$ and $\tomega(x) = o(\sqrt{x})$ as $x \to 0$, this concludes our proof in the case $\phi \geq 0$.

We now consider the case $\phi \leq 0$, so $\lambda_\omega < 0$. By \cite[Assumption ($\phi_2$)]{CarrilloMcCannVillani}, $\phi(x) \geq - kx$ for some $k \in (0,+\infty)$. 
Therefore,
\[ |\omega_1(x) - \omega_1(y)| \leq  \left| \frac{2}{\lambda_\omega} \int_{\sqrt{x}}^{\sqrt{y}} \phi(s) ds  \right| \leq \frac{2}{|\lambda_\omega|} \left|  \int_{\sqrt{x}}^{\sqrt{y}} ks ds  \right| \leq \frac{k}{|\lambda_\omega|} |y-x| . \]
Since $\tomega(x) := \frac{k}{|\lambda_\omega|} x$ is an Osgood modulus of continuity for $\omega_1(x)$ and $\tomega(x) = o(\sqrt{x})$ as $x \to 0$, this concludes our proof for $\phi \leq 0$.
\end{proof}

\subsection{Properties of the ordinary differential equation associated to $\omega$} \label{ode appendix}
In this section we prove a few elementary properties of the ordinary differential equation (\ref{omega ode}) and its approximation by the explicit Euler method, which we used in our proof of the convergence of the discrete gradient flow.

\begin{proof}[Proof of Proposition \ref{odeproposition}]
The result holds trivially for $\lambda_\omega = 0$, so assume $\lambda_\omega \neq 0$.
By definition of $F_t(x)$, for any $m \in \mathbb{N}$, $h>0$, and $mh < T$,
\[ F_{mh}(x) - F_{(m-1)h}(x) =  \int_{(m-1)h}^{mh} \lambda_\omega \omega(F_s(x))ds . \]
Therefore, if $y \geq0$,
\begin{align} \label{odeprop1}
|F_{mh}(x) - f_h(y)| 
&\leq  |F_{(m-1)h}(x) - y |  + |\lambda_\omega| \int_{(m-1)h}^{mh} | \omega(F_s(x))- \omega(y) | ds
\end{align}
Consequently, if we define
\[ \bar{F}_{s,h}(x) := \ f_{h}^{(i)}(x) \text{ for }  ih \leq s < (i+1)h , \]
then, taking $y = f_h^{(m-1)}(x)$ in inequality (\ref{odeprop1}) and using that $\omega(x)$ has modulus of continuity $\tilde{\omega}(x)$,
\begin{align*}
|F_{mh}(x)-\bar{F}_{mh,h}(x)| \leq |F_{(m-1)h}(x)-\bar{F}_{(m-1)h,h}(x)| + |\lambda_\omega| \int_{(m-1)h}^{mh} \tilde{\omega}(|F_s(x)-\bar{F}_{s,h}(x)|) ds .
\end{align*}
Setting $g_h(s) := | F_s(x) - \bar{F}_{s,h}(x)|$, we may iterate this to obtain
\[ g_h(mh) \leq g_h((m-1)h) + |\lambda_\omega| \int_{(m-1)h}^{mh} \tilde{\omega}(g_h(s)) ds \leq  |\lambda_\omega| \int_0^{mh} \tilde{\omega}(g_h(s)) ds  . \]
Given $t<T$, define $m:=  \lfloor{\frac{t}{h}} \rfloor $. Then
\begin{align*} g_h(t) &= |F_t(x) - \bar{F}_{t,h}(x)| = |F_t(x) - F_{mh}(x)| + g_h(mh) \leq |\lambda_\omega| \int_{mh}^t \omega(F_s(x))ds +  |\lambda_\omega| \int_0^t \tomega(g_h(s)) ds  \\
&\leq \begin{cases} |\lambda_\omega| h \omega(F_t(x))+  |\lambda_\omega| \int_0^t \tomega(g_h(s)) ds &\text{ if } \lambda_\omega >0 , \\
|\lambda_\omega| h \omega(x)+  |\lambda_\omega| \int_0^t \tomega( g_h(s) )ds &\text{ if } \lambda_\omega <0 ,
\end{cases}
\end{align*}
where the last inequality follows since $\omega(x)$ is nondecreasing and $F_t(x)$ is either increasing in $t$ (if $\lambda_\omega >0$) or decreasing in $t$ (if $\lambda_\omega < 0$). Then Bihari's inequality \cite{Bihari} implies
\[ g_h(t) \leq \begin{cases} \tF_{|\lambda_\omega| t}(|\lambda_\omega| h \omega(F_t(x))) &\text{ if } \lambda_\omega >0 , \\
\tF_{|\lambda_\omega| t}(|\lambda_\omega| h  \omega(x)) &\text{ if } \lambda_\omega <0 .
\end{cases}. \]
Finally, taking $h = t/m$ gives the result.
\end{proof}

\begin{proof}[Proof of Lemma \ref{monotonicitylem}]
First, we show (\ref{ftau monotone}). When $\lambda_\omega \geq 0$, the result is an immediate consequence of the fact that $\omega$ is nondecreasing.  Now suppose $\lambda_\omega < 0$ and $0 \leq x \leq y \leq r$.
Since $\omega$ has an Osgood modulus of continuity $\tomega(x)$,
\begin{align} \label{ftau monotone 1} f_\tau(x) - f_\tau(y) =  (x-y) + \lambda_\omega \tau (\omega(x) - \omega(y)) \leq-|x-y| + |\lambda_\omega| \tau \tilde{\omega}(|x-y|).
\end{align}
Let $\delta = |x-y|$. If $|\lambda_\omega| \tau \tilde{\omega}(\delta) \leq \delta$, then the right hand side of (\ref{ftau monotone 1}) is negative, and (\ref{ftau monotone}) holds. Therefore, assume that $|\lambda_\omega| \tau \tilde{\omega}(\delta) \geq \delta$. Since $\sqrt{\delta} \leq |\lambda_\omega| \tau \tilde{\omega}(\delta)/\sqrt{\delta}$, we have $\delta \leq \lambda_\omega^2  c_{r}^2 \tau^2$, and since $\tilde{\omega}$ is nonincreasing, substituting this into (\ref{ftau monotone 1})  shows
\begin{align*}f_\tau(x) - f_\tau(y)  \leq |\lambda_\omega| \tau \tilde{\omega}\left( \lambda_\omega^2 c_{r}^2 \tau^2 \right) \leq \lambda_\omega^2 c_r^2 \tau^2.
\end{align*}

Now we consider (\ref{ftau break}). If $\lambda_\omega \leq 0$, by the monotonicity of $\omega$,
\[ f_\tau(x + y) = x+ y + \lambda_\omega \tau \omega(x+y) \leq x + y + \lambda_\omega \tau \omega(x) = f_\tau(x) + y . \]
If $\lambda_\omega >0$, we use the fact that $\omega$ has modulus of continuity $\tilde{\omega}$ to conclude
\[ f_\tau(x + y) = x+ y + \lambda_\omega \tau \omega(x+y) = f_\tau(x) + y + \lambda_\omega \tau (\omega(x+y) - \omega(x)) \leq f_\tau(x) + y +\lambda_\omega \tau \tilde{\omega}(y) . \]
\end{proof}

\subsection{Time-dependent energy functionals}
In this section, we describe how our previous results on the convergence of the discrete gradient flow may be adapted to prove the convergence of the discrete gradient flow of a time-dependent energy. Specifically, we consider discrete gradient flow sequences of the following form:
\begin{align} \label{timeDepJKO} \mu^n_\tau \in \argmin_{\nu \in \P_2(\Rd)} \left\{ \frac{1}{2\tau} W_2^2(\mu^{n-1}_\tau, \nu) + E^n_\tau(\nu) \right\} .
\end{align}

Similar discrete gradient flow sequences of time-dependent energies were previously studied by Petrelli and Tudorascu \cite{PetrelliTudorascu}, without explicit rates of convergence. Provided that $E^n_\tau(\nu)$ possesses sufficient continuity with respect to $n \tau$ and is uniformly $\omega$-convex as $n$ and $\tau$ vary, our previous arguments may be adapted quantify the rate of convergence in this case.

We consider energies satisfying the following assumption:
\begin{assumption}[assumption on time dependent energy] \label{time energy assumption} Suppose that $E^0_\tau = E^0_h$ for all $\tau, h>0$, and abbreviate this initial energy by $E^0$. Suppose that for any $\mu \in D(E^0)$ and $T>0$, there exists $C(\mu,T)>0$ so that 
\begin{enumerate}[(i)]
\item  \label{time ass0} $E^n_\tau$ satisfies assumption \ref{main assumptions}, with the same choice of $\omega(x)$, $\lambda_\omega$, and $\tau_*$ for all $\tau >0$, $n \in \mathbb{N}$;
\item  \label{time ass1} $E^0_\tau(\mu) - E^n_\tau(\mu^n_\tau) \leq C(\mu,T) $, $ W_2(\mu^{n-1}_\tau,\mu^n_\tau) \leq \sqrt{2 \tau C(\mu, T)}$, and $W_2(\mu , \mu^n_\tau) \leq \sqrt{2 n \tau C(\mu,T)}$ for all $0 \leq \tau< \tau_*$ and $n \in \mathbb{N}$ so that $n \tau \leq T$;
\item  \label{time ass2} there exists a continuous, nondecreasing, concave function $\sigma:[0,+\infty) \to [0, +\infty)$, vanishing only at zero so that for all $m, n \in \mathbb{N}$ and $0 \leq h \leq \tau < \tau^*$ with $mh, n \tau \leq T$,
\begin{align*}  |E^n_{\tau}(\mu^i_\tau) -E^m_{h}(\mu^i_\tau)| \leq C(\mu, T) \left( \sigma((n \tau-m h)^2) + c(\tau)  \right) &\text{ for all } 1 \leq i \leq n ,
\end{align*}
where $c(\tau) \xrightarrow{\tau \to 0} 0$.
\end{enumerate}
\end{assumption}
\noindent Item (\ref{time ass1}) is the analogue of inequalities (\ref{tau mu def}), (\ref{tau mu cons}), and (\ref{base case}) in the time varying case.

\begin{theorem}[convergence of discrete gradient flow] \label{time dep exp form thm}
Suppose $E^n_\tau$ satisfies assumption \ref{time energy assumption}. Then for all $0 \leq t \leq T$,
\begin{align*}\tF_{2t}(W_2^2(\mu^n_{t/n}, \mu(t)) &\leq \bar{C}  \left[ t/\sqrt{n} + t \tomega(\sqrt{t/n}) \right	] + 4R^2 t \left(\sigma(t^2/n)) +c(\tau)  \right) \text{ as } n \to +\infty,
\end{align*}
where $R$ and $\bar{C}$ depend on $\tomega, \lambda_{\tomega}, C(\mu,T)$, $T$, and $\tau_*$.
\end{theorem}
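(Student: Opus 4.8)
The plan is to mirror the proof architecture developed in Sections \ref{section 3} for time-independent energies, tracking the extra error contributions produced by the time-dependence of $E^n_\tau$. The key observation is that all the structural inputs of that argument — the discrete EVI (Proposition \ref{discreteEVI}), the contraction inequality (Theorem \ref{cothm}), and above all the monotonicity and convexity properties of $f_\tau(x)$ and $\tf_\tau(x)$ — are local-in-time statements: each step of the JKO scheme (\ref{timeDepJKO}) with minimizer taken against the single energy $E^n_\tau$ satisfies exactly the same one-step estimates, because by Assumption \ref{time energy assumption}(\ref{time ass0}) each $E^n_\tau$ is $\omega$-convex with the \emph{same} $\omega$, $\lambda_\omega$, $\tau_*$. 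Thus the proofs of Proposition \ref{discreteEVI}, Lemma \ref{proxmapdifftimelem}, and Theorem \ref{cothm} carry over verbatim with $E$ replaced by $E^n_\tau$, and Assumption \ref{time energy assumption}(\ref{time ass1}) replaces the a priori bounds (\ref{tau mu def}), (\ref{tau mu cons}), (\ref{base case}) used to set the radius $R$ and control $W_2(\mu^{j-1}_\tau,\mu^j_\tau)$.

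First I would re-derive the asymmetric recursive inequality (the analogue of Theorem \ref{recineqthm}) for the two sequences $\mu^n_\tau$ and $\mu^m_h$. Following that proof line by line, the only new term appears when one passes from $E^n_\tau$-increments to $E^m_h$-increments: each occurrence of a difference $E(\mu^{m-1})-E(\mu^m)$ becomes $E^{m-1}_h(\mu^{m-1}_h)-E^m_h(\mu^m_h)$, which telescopes as before, but the ``mixed'' terms $E^n_\tau(\mu^i_\tau)$ versus $E^m_h(\mu^i_\tau)$ that arise from comparing the two scales are now controlled by Assumption \ref{time energy assumption}(\ref{time ass2}), contributing an additional error of size $\bar{C}'\,h\big(\sigma((n\tau-mh)^2)+c(\tau)\big)$ at each step. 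Iterating exactly as in Theorem \ref{W2RasBound} then yields
\begin{align*}
\tf^{(2m)}_h(W_2^2(\mu^n_\tau,\mu^m_h)) &\leq \bar{C}\Big[\sqrt{(n\tau-mh)^2+\tau^2 n}+hm\tomega(\sqrt{\tau})+h^2 m+\tomega(h^2)m\Big]\\
&\quad +\bar{C}\, m h\big(\sigma((n\tau-mh)^2)+c(\tau)\big)+2h\big(E^0(\mu)-E^m_h(\mu^m_h)\big),
\end{align*}
where the telescoped energy difference is bounded via Assumption \ref{time energy assumption}(\ref{time ass1}). Finally, setting $\tau=t/n$, $h=t/m$ with $m\geq n$, using $mh=t\leq T$ so that $\sigma((n\tau-mh)^2)=\sigma((t-t)^2)=\sigma(0)=0$ when $n=m$ — and more generally $\sigma((t - t)^2)$... here one keeps $n\tau-mh = t-t=0$ only on the diagonal; off-diagonal one has $\sigma((t(1-n/m))^2)$, bounded by $\sigma(t^2)$ — and invoking Proposition \ref{odeproposition} to replace $\tf^{(2m)}_h$ by $\tF_{2t}$ (with $R^2$ an upper bound for the relevant Wasserstein distances, itself furnished by Assumption \ref{time energy assumption}(\ref{time ass1})), one obtains that $\mu^n_{t/n}$ is Cauchy, hence converges to a limit $\mu(t)$, and sending $m\to\infty$ produces the stated estimate, with the $4R^2 t(\sigma(t^2/n)+c(\tau))$ term emerging from the mixed-energy error after the diagonalization.

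The main obstacle I anticipate is bookkeeping the mixed-energy error term through the induction in the recursive inequality: one must verify that the weighted combination $\tfrac{h}{\tau}(\cdots)+\tfrac{\tau-h}{\tau}(\cdots)$ that appears at each inductive step preserves a bound of the form $C\,mh\,\sigma((n\tau-mh)^2)$ and does not accumulate uncontrollably — this requires using concavity and monotonicity of $\sigma$ together with the identity (\ref{pre exp form1}) in the same way $\sqrt{\cdot}$ was handled there, and checking that the argument $(n\tau-mh)^2$ behaves correctly under the $n\mapsto n-1$ shift. A secondary technical point is ensuring $c(\tau)$, which is not assumed to have any rate, only enters additively (not multiplied by growing factors); this is automatic since it is bounded and multiplied by $mh\leq T$. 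Apart from these, every estimate is a direct transcription of the corresponding time-independent result.
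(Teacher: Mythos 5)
Your overall strategy is the right one and matches the paper's: rerun the one-step estimates (Proposition \ref{discreteEVI}, Lemma \ref{proxmapdifftimelem}, Theorem \ref{cothm}) with $E$ replaced by $E^n_\tau$, control the mixed terms $E^n_\tau(\mu^i_\tau)-E^m_h(\mu^i_\tau)$ by Assumption \ref{time energy assumption}(\ref{time ass2}), and push the extra error through the recursion of Theorems \ref{recineqthm}--\ref{W2RasBound}. But the precise error bookkeeping --- which you yourself flag as ``the main obstacle'' --- is where the proposal breaks. Your claimed cumulative bound carries the term $\bar{C}\,mh\bigl(\sigma((n\tau-mh)^2)+c(\tau)\bigr)$, and the inductive step does not close in that form: by concavity of $\sigma$ and the identity (\ref{pre exp form1}), the weighted combination produces the argument $\frac{h}{\tau}((n-1)\tau-mh)^2+\frac{\tau-h}{\tau}(n\tau-mh)^2=(n\tau-mh)^2-2h(n\tau-mh)+h\tau$, which exceeds $(n\tau-(m+1)h)^2$ by $h(\tau-h)\geq 0$, so the hypothesis at level $m+1$ is not recovered. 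Exactly as for the $\sqrt{\cdot}$ term, you must carry the slack $\tau^2 n$ \emph{inside} the argument of $\sigma$, i.e.\ the correct cumulative error is of the form $4R^2hm\bigl(\sigma((n\tau-mh)^2+\tau^2 n)+\sigma(h^2)+c(\tau)\bigr)$; then the identity gives $(n\tau-mh)^2+\tau^2 n-2h(n\tau-mh)\leq (n\tau-(m+1)h)^2+\tau^2 n$ and the induction goes through. This is not a cosmetic point: your bound is also inconsistent with the statement you are proving. With $\tau=t/n$, $h=t/m$ one has $n\tau=mh=t$ identically (your ``diagonal/off-diagonal'' remark about $\sigma((t(1-n/m))^2)$ is mistaken), so $\sigma((n\tau-mh)^2)=\sigma(0)=0$ and your bound would yield no $\sigma$-contribution at all, whereas the theorem's term $4R^2t\,\sigma(t^2/n)$ comes precisely from the $\tau^2 n$ slack that must be kept inside $\sigma$.

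A secondary omission: the single-step error involves $E^m_h(\mu^{m-1})-E^m_h(\mu^m)$ with a \emph{fixed} energy index, so the telescoping in $m$ does not happen ``as before''; one must insert the comparison $2h\bigl(E^{m+1}_h(\mu^m)-E^m_h(\mu^m)\bigr)\lesssim h\,\sigma(h^2)$ (again Assumption \ref{time energy assumption}(\ref{time ass2}), now with time gap $h$), which contributes the $\sigma(h^2)$ term above, accumulating to $t\,\sigma(t^2/m^2)$ and disappearing only in the limit $m\to\infty$ after Proposition \ref{odeproposition} is applied. Once these two corrections are made, the rest of your argument (diagonalization, Cauchy property, passage to the limit $m\to\infty$) is exactly the paper's proof.
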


\begin{proof}
We consider the case $\lambda_\omega \leq 0$, since this is all that is needed to prove the above theorem.
In section \ref{one step section}, Proposition \ref{discreteEVI} remains unchanged, as it only depends on the behavior of the  energy at one step of the discrete gradient flow.
The contraction inequality, Theorem \ref{cothm}, changes slightly, since $\mu_\tau$ and $\nu_\tau$ may correspond to the energy at different times.  Suppose that 
\[ \mu_\tau \in \argmin_\rho \left\{ \frac{1}{2 \tau} W_2^2(\mu, \rho) + E^n_\tau(\nu) \right\} \quad  \text{ and } \quad \nu_\tau \in \argmin_\rho \left\{ \frac{1}{2 \tau} W_2^2(\nu, \rho) + E^m_h(\rho) \right\} . \]
Then the energies in  (\ref{discrete EVI4}) are $E^n_\tau$ and the energies in (\ref{discrete EVI5}) are $E^m_h$. Consequently, when they are added together to obtain inequality (\ref{cont1}), we must replace the first term on the right hand side by 
\begin{align} \label{time 1} 2 \tau( E^m_h(\mu) - E^n_\tau(\mu_\tau) + E^n_\tau(\nu_\tau) - E^m_h(\nu_\tau)) . 
\end{align}
Thus, for $0 \leq \tau < \min \{ 1, \tau_*, (c_r \lambda_\omega)^{-1}, C(\mu, T)/2, C(\nu, T)/2\}$, the theorem holds with the third term on the right hand side of the inequality replaced by (\ref{time 1}).

We now consider section \ref{convergence subsection}, in which we prove the convergence of the discrete gradient flow. The Asymmetric Recursive Inequality, Theorem \ref{recineqthm}, requires similar modifications as did the contraction inequality. In particular, we take $R:= \max \{ \sqrt{2(4T+1)C(\mu,T)},3 \}$. Then, as $\nu \to \nu_h$ corresponds to the energy $E^n_\tau$, we modify inequality (\ref{asym1}) by replacing the third term on the right hand side by
\begin{align*}  2 h( E^n_\tau(\mu^{m-1}) - E^m_h(\mu^n) + E^m_h(\mu^n) - E^n_\tau(\mu^n)) .
\end{align*}
Likewise, the claim continues to hold, replacing the $2 \tau (E(\mu^{m-1})-E(\mu^m))$ term by 
\begin{align*}
2 \tau (E^n_\tau(\mu^{m-1})- E^m_h(\mu^m) +  E^m_h(\mu^n) - E^n_\tau(\mu^n) ).
\end{align*}
Thus, for $m,n$ sufficiently large, the error estimate in Theorem \ref{recineqthm} becomes
\[ \bar{C}\left[ h \tomega(\sqrt{\tau}) +  h^2 + \tomega(h^2) \right] + 2 h( E^m_h(\mu^{m-1}) - E^m_h(\mu^m)) +4R^2h\left( \sigma((n \tau - mh)^2) +c(\tau)\right)  \]
with $\bar{C}$ as in (\ref{bar C1 def}).

Now, we consider Theorem \ref{W2RasBound}, quantifying the distance between two discrete gradient flows with the same initial conditions. We claim that the result continues to hold with the last term on the right hand side replaced by
\[2h(E^0(\mu) - E^{m}_h(\mu^m)) + 4R^2hm   \left(\sigma \left((n \tau - mh)^2 + \tau^2 n \right) + \sigma \left(h^2 \right) + c(\tau) \right) . \]
with $R:= \max \{ \sqrt{2(4T+1)C(\mu,T)},3 \}$. The proof proceeds as before, replacing inequality (\ref{pre exp form2}) by
\begin{align*}
&\tf^{(2(m+1))}_h (W_2^2(\mu^n_\tau, \mu^{m+1}_h))  \\
&\leq   \frac{h}{\tau} \bar{C}  \sqrt{((n-1)\tau - mh)^2 +\tau^2  (n-1)}  + \frac{\tau-h}{\tau} \bar{C}  \sqrt{ (n\tau - mh)^2+ \tau^2  n}  \nonumber \\ 
&\quad + 4R^2hm \left( \frac{h}{\tau} \sigma(((n-1)\tau - mh)^2 +\tau^2(n-1)) + \frac{\tau-h}{\tau} \sigma((n\tau - mh)^2+ \tau^2 n) + \sigma(h^2) + c(\tau) \right)\\
&\quad + \bar{C} \left[ h m \tomega(\sqrt{\tau}) + h^2m+ \tomega(h^2)m \right]+ 2h (E^0(\mu) - E_h^{m}(\mu^m)) \nonumber \\
&\quad + \bar{C} \left[ h \tomega( \sqrt{ \tau})+ h^2 + \tomega(h^2) \right] + 2h (E^{m+1}_h(\mu^{m}) - E^{m+1}_h(\mu^{m+1}))  + 4R^2 h \left(\sigma \left((n\tau - (m+1)h)^2 \right) + c(\tau) \right)
\end{align*}
By the concavity of $\sqrt{\cdot}$ and $\sigma(\cdot)$ and the fact that $ 2h(E^{m+1}_h(\mu^m) - E^m_h(\mu^m)) \leq 4Rh \sigma(h^2)$, we may bound the right hand side of the above inequality by
\begin{align*} 
&\bar{C}\sqrt{(n \tau - (m+1)h)^2 + \tau^2n} + 4R^2h(m+1) \ \left( \sigma \left((n \tau - (m+1)h)^2 + \tau^2n \right)+ \sigma \left(h^2 \right) + c(\tau) \right)\\
&+\bar{C} \left[ h(m+1) \tomega(\sqrt{\tau}) + h^2(m+1) + \tomega(h^2) (m+1) \right] + 2 h(E^0(\mu) - E^{m+1}_h(\mu^{m+1})) , \end{align*}
which gives the result.

Finally, we come to the exponential formula, Theorem \ref{expform}. Taking $\tau = t/n$ and $h = t/m$ in Theorem \ref{W2RasBound}, with the modifications described above, we obtain
\begin{align*} \tf^{(2m)}_h(W_2^2(\mu^n_{t/n}, \mu^m_{t/m})) &\leq \bar{C} \left[ t/\sqrt{n} + t \tomega(\sqrt{t/n}) + t^2/m + \tomega(t^2/m^2)m \right] + 2(t/m)(E^0(\mu) - E^m_h(\mu^m)) \nonumber \\
&\quad+ 4Rt \left(\sigma(t^2/n) + \sigma(t^2/m^2) + c(\tau) \right) .
\end{align*}
Consequently, Proposition \ref{odeproposition} and item (\ref{time ass1}) ensure
\begin{align} \label{exp form time} \tF_{2t}(W_2^2(\mu^n_{t/n}, \mu^m_{t/m})) &\leq \bar{C} \left[ t/\sqrt{n} + t \tomega(\sqrt{t/n}) + t^2/m + \tomega(t^2/m^2)m \right]  + R^2(t/m) \nonumber \\
&\quad + 4R^2 t \left(\sigma(t^2/n)+ \sigma(t^2/m^2) + c(\tau) )\right) + \tF_{\lambda_\omega^- 2t} \left(\frac{\lambda_\omega^- 2 t \tomega(R^2)}{m} \right) .
\end{align}
Therefore, $\mu^n_{t/n}$ is Cauchy, hence converges to a limit $\mu(t)$. Sending $m \to +\infty$ in (\ref{exp form time}) gives the result.

\end{proof}

\noindent {\bf Acknowledgments.} The author would like to thank Luigi Ambrosio, Eric Carlen, Nestor Guillen, Inwon Kim, Guiseppe Savar\'e, and Yao Yao for useful discussions.

\bibliographystyle{spmpsci}
\bibliography{interaction_energy}

\end{document}